\providecommand{\U}[1]{\protect\rule{.1in}{.1in}}
\newtheorem{theorem}{Theorem}
\newtheorem{acknowledgement}[theorem]{Acknowledgement}
\newtheorem{corollary}[theorem]{Corollary}
\newtheorem{definition}[theorem]{Definition}
\newtheorem{lemma}[theorem]{Lemma}
\newtheorem{notation}[theorem]{Notation}
\newtheorem{proposition}[theorem]{Proposition}
\newtheorem{remark}[theorem]{Remark}
\newenvironment{proof}[1][Proof]{\noindent\textbf{#1.} }{\ \rule{0.5em}{0.5em}}
\let\pdfoutput=\undefined\fi
\begin{document}

\title{A pseudo-differential calculus on non-standard symplectic space; spectral and
regularity results in modulation spaces}
\author{Nuno Costa Dias
\and Maurice de Gosson
\and Franz Luef
\and Jo\~{a}o Nuno Prata}
\maketitle

\begin{abstract}
The usual Weyl calculus is intimately associated with the choice of the
standard symplectic structure on $\mathbb{R}^{n}\oplus\mathbb{R}^{n}$. In this
paper we will show that the replacement of this structure by an arbitrary
symplectic structure leads to a pseudo-differential calculus of operators
acting on functions or distributions defined, not on $\mathbb{R}^{n}$ but
rather on $\mathbb{R}^{n}\oplus\mathbb{R}^{n}$. These operators are
intertwined with the standard Weyl pseudo-differential operators using an
infinite family of partial isometries of $L^{2}(\mathbb{R}^{n})\longrightarrow
L^{2}(\mathbb{R}^{2n})$ \ indexed by $\mathcal{S}(\mathbb{R}^{n})$. This
allows us obtain spectral and regularity results for our operators using
Shubin's symbol classes and Feichtinger's modulation spaces.

\end{abstract}

\section{Introduction}

Every traditional pseudo-differential calculus harks back in one way or
another to the physicists' early work on quantum mechanics. Following the
founding fathers of quantum mechanics one should associate to a symbol (or
\textquotedblleft observable\textquotedblright) defined on $\mathbb{R}%
^{2n}\equiv\mathbb{R}^{n}\oplus\mathbb{R}^{n}$ an operator obtained by
replacing the coordinates $x_{j}$ by the operator $\widehat{X}_{j}$ of
multiplication by $x_{j}$ and the dual variable $\xi_{j}$ by the operator
$\widehat{\Xi}_{j}=-i\partial_{x_{j}}$. The ordering problem (what is the
operator associated with $\xi_{j}x_{j}=x_{j}\xi_{j}$?) was solved in a
satisfactory way by Weyl \cite{Weyl}: one associates to the symbol $a$ the
operator $\widehat{A}=\operatorname*{Op}^{w}(a)$ with kernel formally defined
by%
\begin{equation}
K(x,y)=\left(  \tfrac{1}{2\pi}\right)  ^{n}\int_{\mathbb{R}^{n}}%
e^{i(x-y)\cdot\xi}a(\tfrac{1}{2}(x+y),\xi)d\xi. \label{kxy}%
\end{equation}
The Weyl correspondence $a\overset{\text{Weyl}}{\longleftrightarrow
}\widehat{A}$ plays a somewhat privileged role among the other possible
choices $a\overset{\tau}{\longleftrightarrow}A_{\tau}$ corresponding to the
kernels%
\begin{equation}
K_{\tau}(x,y)=\left(  \tfrac{1}{2\pi}\right)  ^{n}\int_{\mathbb{R}^{n}%
}e^{i(x-y)\cdot\xi}a(\tau x+(1-\tau)y),\xi)d\xi\label{ktauxy}%
\end{equation}
with $\tau\in\mathbb{R}$. This is due mainly to two reasons: first of all, the
choice (\ref{kxy}) ensures us that to real symbols correspond (formally)
self-adjoint operators; secondly, among all possible choices (\ref{ktauxy})
the Weyl correspondence $a\overset{\text{Weyl}}{\longleftrightarrow
}\widehat{A}$ is the only one which has the symplectic covariance property
$a\circ S\overset{\text{Weyl}}{\longleftrightarrow}\widehat{S}^{-1}%
\widehat{A}\widehat{S}$ where $\widehat{S}\in\operatorname*{Mp}(2n,\sigma)$
has projection $S\in\operatorname*{Sp}(2n,\sigma)$ ($\operatorname*{Sp}%
(2n,\sigma)$ and $\widehat{S}\in\operatorname*{Mp}(2n,\sigma)$ are the
symplectic and metaplectic groups, respectively). It turns out that the Weyl
correspondence is intimately related to the standard symplectic structure
$\sigma(z,z^{\prime})=\xi\cdot x^{\prime}-\xi^{\prime}\cdot x$ on
$\mathbb{R}^{n}\oplus\mathbb{R}^{n}$ or, equivalently, to the commutation
relations%
\begin{equation}
\lbrack\widehat{X}_{j},\widehat{X}_{k}]=[\widehat{\Xi}_{j},\widehat{\Xi}%
_{k}]=0\text{ \ , \ }[\widehat{X}_{j},\widehat{\Xi}_{k}]=i\delta_{jk}
\label{CCR1}%
\end{equation}
satisfied by the elementary Weyl operators $\widehat{X}_{j},\widehat{\Xi}_{k}%
$. Setting $\widehat{Z}_{\alpha}=\widehat{X}_{\alpha}$ if $1\leq\alpha\leq n$
and $\widehat{Z}_{\alpha}=\widehat{\Xi}_{\alpha-n}$ if $n+1\leq\alpha\leq2n$
these relations can be rewritten
\begin{equation}
\lbrack\widehat{Z}_{\alpha},\widehat{Z}_{\beta}]=ij_{\alpha\beta}\text{ \ for
\ }1\leq\alpha,\beta\leq2n \label{CCR1bis}%
\end{equation}
where%
\[
J=(j_{\alpha\beta})_{1\leq\alpha,\beta\leq2n}=%
\begin{pmatrix}
0 & I\\
-I & 0
\end{pmatrix}
\]
is the matrix of the symplectic form $\sigma$. Here $I,0$ denote the $n\times
n$ identity and zero matrices, respectively.

We now make the two following essential observations:

\begin{itemize}
\item There are many operators satisfying the commutation relations
(\ref{CCR1})--(\ref{CCR1bis}). For instance, they are preserved if one
replaces $\widehat{X}_{j}$ and $\widehat{\Xi}_{j}$ with the operators\
\begin{equation}
\widetilde{X}_{j}=x_{j}+\tfrac{1}{2}i\partial_{\xi_{j}}\text{ \ ,
\ }\widetilde{\Xi}_{j}=\xi_{j}-\tfrac{1}{2}i\partial_{x_{j}} \label{ccr3}%
\end{equation}
(these are the \textquotedblleft Bopp shifts\textquotedblright\ \cite{Bopp}
familiar from the physical literature). Notice that $\widetilde{X}_{j}$ and
$\widetilde{\Xi}_{j}$ act not on functions defined on $\mathbb{R}^{n}$ but
rather on functions defined on $\mathbb{R}^{n}\oplus\mathbb{R}^{n}$. Indeed,
in recent papers de Gosson \cite{CPDE}, de Gosson and Luef \cite{GOLU1,GOLU2},
Dias et al. \cite{digoprlu1} it has been shown that the operators
$\widetilde{X}_{j}$ and $\widetilde{\Xi}_{j}$ can be used to reformulate the
Moyal product familiar from deformation quantization \cite{BFFLS1,BFFLS2} in
terms of a phase-space pseudo-differential calculus, which also intervenes in
the study of certain magnetic operators (\textquotedblleft Landau
calculus\textquotedblright\ \cite{CPDE});

\item The second observation takes us to the subject of this
paper. The choice of the standard symplectic structure, associated
with the commutation relations (\ref{CCR1bis}), is to a great
extent arbitrary. So one could wonder what happens if we replace
the matrix $J=(j_{\alpha\beta})_{1\leq\alpha ,\beta\leq2n}$ with
some other non-degenerate skew-symmetric matrix $\Omega$. This
question is not only academic: the study of non-commutative field
theories and their connections with quantum gravity
\cite{dipra1,badipr,babedipr,babedipr1,Douglas,Szabo} has led
physicists to consider
more general commutation relations of the type%
\begin{equation}
\lbrack\widetilde{Z}_{\alpha},\widetilde{Z}_{\beta}]=i\omega_{\alpha\beta
}\text{ \ for \ }1\leq\alpha,\beta\leq2n \label{CCR2}%
\end{equation}
where the numbers $\omega_{\alpha\beta}$ are defined by
\begin{equation}
\Omega=(\omega_{\alpha\beta})_{1\leq\alpha,\beta\leq2n}=%
\begin{pmatrix}
\Theta & I\\
-I & N
\end{pmatrix}
\label{omega}%
\end{equation}
where $\Theta=(\theta_{\alpha\beta})_{1\leq\alpha,\beta\leq n}$ and
$N=(\eta_{\alpha\beta})_{1\leq\alpha,\beta\leq n}$ are antisymmetric matrices
(see \cite{babedipr,bracz,cahakola}). The commutation relations (\ref{CCR2})
are satisfied by the operators
\begin{align}
\widetilde{X}_{j}  &  =x_{j}+\tfrac{1}{2}i\partial_{\xi_{j}}+\tfrac{1}{2}%
i\sum\nolimits_{k}\theta_{jk}\partial_{x_{k}}\label{ccr31}\\
\widetilde{\Xi}_{j}  &  =\xi_{j}-\tfrac{1}{2}i\partial_{x_{j}}+\tfrac{1}%
{2}i\sum\nolimits_{k}\eta_{jk}\partial_{\xi_{k}} \label{ccr32}%
\end{align}
which reduce to the \textquotedblleft Bopp shifts\textquotedblright%
\ (\ref{ccr3}) when $\Omega=J$. The relation of these operators with a
deformation quantization has been made explicit in Dias et al.
\cite{digoprlu1}.
\end{itemize}

Writing formulas (\ref{ccr31})--(\ref{ccr32}) in compact form as
\begin{equation}
\widetilde{Z}=z+\tfrac{1}{2}i\Omega\partial_{z} \label{cc4}%
\end{equation}
this suggests that one should be able to give a sense to pseudo-differential
operators formally written as
\begin{equation}
\widetilde{A}_{\omega}=a(\widetilde{Z})=a(z+\tfrac{1}{2}i\Omega\partial_{z}).
\label{atild1}%
\end{equation}
We set out in this paper to justify formula (\ref{atild1}); more generally we
define a pseudo-differential calculus arising from the choice of an arbitrary
symplectic form $\omega$ with constant coefficients on $\mathbb{R}^{n}%
\oplus\mathbb{R}^{n}$ associated to an antisymmetric matrix $\Omega\in
GL(2n;\mathbb{R})$ by the formula
\[
\omega(z,z^{\prime})=z\cdot\Omega^{-1}z^{\prime}.
\]
This symplectic form obviously coincides with the standard symplectic form
$\sigma$ when $\Omega=J=%
\begin{pmatrix}
0 & I\\
-I & 0
\end{pmatrix}
$. The consideration of such operators $\widetilde{A}_{\omega}$ leads to a
class of Weyl operators with symbols defined on $\mathbb{R}^{2n}%
\oplus\mathbb{R}^{2n}$.

In this article we will show that:

\begin{itemize}
\item The formal definition (\ref{atild1}) of the operators $\widetilde{A}%
_{\omega}$ and their Weyl symbols can be made rigorous;

\item The operators $\widetilde{A}_{\omega}$ are intertwined with the usual
Weyl operators $\widehat{A}$ using a family of partial isometries
$u\longmapsto W_{f,\phi}u$ of $L^{2}(\mathbb{R}^{n})$ in $L^{2}(\mathbb{R}%
^{2n})$ parametrized by $\phi\in\mathcal{S}(\mathbb{R}^{n})$;

\item The spectral properties of the operators $\widetilde{A}_{\omega}$ can be
recovered from those of $\widehat{A}$ using these intertwining relations; in
particular the consideration of Shubin's classes of globally hypoelliptic
symbols will allow us to state a very precise result when $\widehat{A}$ is
formally self-adjoint.
\end{itemize}

Our results show that the study of the physicist's \textquotedblleft
non-commutative quantum mechanics\textquotedblright\ can be reduced to the
study of a particular Weyl calculus with symbols defined on a double phase space.

We want to mention that the connections between symbol classes and
non-commutative harmonic analysis have also been explored (from a different
point of view) by Unterberger \cite{Unterberger1} and Unterberger and Upmeier
\cite{Unterberger2}; it would perhaps be interesting to analyze their results
from the point of view of the methods and tools introduced in the present paper.

\begin{notation}
The generic point of $\mathbb{R}^{n}\oplus\mathbb{R}^{n}\equiv\mathbb{R}^{2n}$
is denoted by $z=(x,\xi)$ and that of $\mathbb{R}^{2n}\oplus\mathbb{R}%
^{2n}\equiv\mathbb{R}^{4n}$ by $(z,\zeta)$. The standard symplectic form
$\sigma$ on $\mathbb{R}^{2n}$ is defined by $\sigma(z,z^{\prime})=\xi\cdot
x^{\prime}-\xi^{\prime}\cdot x$ and the corresponding symplectic group is
denoted $\operatorname*{Sp}(2n,\sigma)$. Given an arbitrary symplectic form
$\omega$ on $\mathbb{R}^{n}\oplus\mathbb{R}^{n}$ we denote by
$\operatorname*{Sp}(2n,\omega)$ the corresponding symplectic group.
\end{notation}

\begin{notation}
Functions (or distributions) on $\mathbb{R}^{n}$ are denoted by small Latin or
Greek letters $u,v,\phi$,... while those defined on $\mathbb{R}^{2n}$ by
capitals $U,V,\Phi,...$ We denote by $\mathcal{S}(\mathbb{R}^{n})$ the
Schwartz space of rapidly decreasing functions on $\mathbb{R}^{n}$; its dual
$\mathcal{S}^{\prime}(\mathbb{R}^{n})$ is the space of tempered distributions.
The scalar product of two functions $u,v\in L^{2}(\mathbb{R}^{n})$ is denoted
by $(u|v)$ and that of $U,V\in L^{2}(\mathbb{R}^{2n})$ by $(\!(U \vert V)\!)$.
The corresponding norms are written $||u||$ and $|||U|||$.
\end{notation}

\section{Phase Space Weyl Operators}

Let us begin by giving a short review of the main definitions and properties
from standard Weyl calculus as exposed (with fluctuating notation) in for
instance \cite{Birk,Hor2,Shubin,Stein,Wong}; this will allow us to list some
useful formulas we will need in the forthcoming sections.

\subsection{Standard Weyl calculus\label{subsecone}}

Given a function $a\in\mathcal{S}(\mathbb{R}^{2n})$ the Weyl operator
$\widehat{A}$ with symbol $a$ is defined by:%
\begin{equation}
\widehat{A}u(x)=\left(  \tfrac{1}{2\pi}\right)  ^{n}\iint\nolimits_{\mathbb{R}%
^{2n}}e^{i(x-y)\cdot\xi}a(\tfrac{1}{2}(x+y),\xi)u(y)dyd\xi\label{ahato}%
\end{equation}
for $u\in\mathcal{S}(\mathbb{R}^{n})$. This definition makes sense for more
general symbols $a$ provided that the integral interpreted in some
\textquotedblleft reasonable way\textquotedblright\ (oscillatory integral, for
instance) when $a$ is in a suitable symbol class, for instance the
H\"{o}rmander classes $S_{\rho,\delta}^{m}$, or the global Shubin spaces
$H\Gamma_{\rho}^{m_{1},m_{0}}$. A better definition is, no doubt, the operator
integral
\begin{equation}
\widehat{A}=\left(  \tfrac{1}{2\pi}\right)  ^{n}\int_{\mathbb{R}^{2n}%
}F_{\sigma}a(z)\widehat{T}(z)dz \label{ahat}%
\end{equation}
because it immediately makes sense for arbitrary symbols $a\in\mathcal{S}%
^{\prime}(\mathbb{R}^{2n})$; here $F_{\sigma}$ is the symplectic Fourier
transform:
\begin{equation}
F_{\sigma}a(z)=\left(  \tfrac{1}{2\pi}\right)  ^{n}\int_{\mathbb{R}^{2n}%
}e^{-i\sigma(z,z^{\prime})}a(z^{\prime})dz^{\prime} \label{sft}%
\end{equation}
$\widehat{T}(z_{0})$ is the Heisenberg--Weyl operator $\mathcal{S}^{\prime
}(\mathbb{R}^{n})\longrightarrow\mathcal{S}^{\prime}(\mathbb{R}^{n})$ formally
defined by%
\begin{equation}
\widehat{T}(z_{0})=e^{-i\sigma(\widehat{Z},z_{0})}\text{ \ \textit{with}
\ }\widehat{Z}=(x,-i\partial_{x}); \label{hwo}%
\end{equation}
the action of $\widehat{T}(z_{0})$ on $u\in\mathcal{S}(\mathbb{R}^{n})$ is
given by the explicit formula
\begin{equation}
\widehat{T}(z_{0})u(x)=e^{i(\xi_{0}\cdot x-\frac{1}{2}\xi_{0}\cdot x_{0}%
)}u(x-x_{0}) \label{hw}%
\end{equation}
if $z_{0}=(x_{0},\xi_{0})$. We note that $F_{\sigma}$ is an involution which
extends into an involutive automorphism $\mathcal{S}^{\prime}(\mathbb{R}%
^{2n})\longrightarrow\mathcal{S}^{\prime}(\mathbb{R}^{2n})$. The Weyl
correspondence, written $a\overset{\text{Weyl}}{\longleftrightarrow
}\widehat{A}$ or $\widehat{A}\overset{\text{Weyl}}{\longleftrightarrow}a$,
between an element $a\in\mathcal{S}^{\prime}(\mathbb{R}^{2n})$ and the Weyl
operator it defines is bijective; in fact the Weyl transformation is
one-to-one from $\mathcal{S}^{\prime}(\mathbb{R}^{2n})$ onto the space
$\mathcal{L}(\mathcal{S}(\mathbb{R}^{n}),\mathcal{S}^{\prime}(\mathbb{R}%
^{2n}))$ of continuous maps $\mathcal{S}(\mathbb{R}^{n})\longrightarrow
\mathcal{S}^{\prime}(\mathbb{R}^{n})$ (see e.g. Maillard \cite{Maillard}, Wong
\cite{Wong}). This can be proved using Schwartz's kernel theorem and the fact
that the Weyl symbol $a$ of the operator $\widehat{A}$ is related to the
distributional kernel of that operator by the partial Fourier transform with
respect to the $y$ variable
\begin{equation}
a(x,\xi)=\int_{\mathbb{R}^{n}}e^{-i\xi\cdot y}K(x+\tfrac{1}{2}y,x-\tfrac{1}%
{2}y)dy \label{ak}%
\end{equation}
where
$K\in\mathcal{S}^{\prime}(\mathbb{R}^{n}\times\mathbb{R}^{n})$ and
the Fourier transform is defined in the usual distributional
sense. Conversely (cf. formula (\ref{ahato})) the kernel $K$ is
expressed in terms of
the symbol $a$ by the inverse Fourier transform%
\[
K(x,y)=\left(  \tfrac{1}{2\pi}\right)  ^{n}\int_{\mathbb{R}^{n}}e^{i\xi
\cdot(x-y)}a(\tfrac{1}{2}(x+y),\xi)d\xi.
\]

Assuming that the product $\widehat{A}\widehat{B}$ exists (which is the case
for instance if $\widehat{B}:\mathcal{S}(\mathbb{R}^{n})\longrightarrow
\mathcal{S}(\mathbb{R}^{n})$) the Weyl symbol $c$ of $\widehat{C}%
=\widehat{A}\widehat{B}$ and its symplectic Fourier transform $F_{\sigma}c$
are given by the formulas
\begin{gather}
c(z)=\left(  \tfrac{1}{4\pi}\right)  ^{2n}\iint\nolimits_{\mathbb{R}%
^{2n}\times\mathbb{R}^{2n}}e^{\frac{i}{2}\sigma(u,v)}a(z+\tfrac{1}%
{2}u)b(z-\tfrac{1}{2}v)dudv\label{compo1}\\
F_{\sigma}c(z)=\left(  \tfrac{1}{2\pi}\right)  ^{n}\int_{\mathbb{R}^{2n}%
}e^{\frac{i}{2}\sigma(z,z^{\prime})}F_{\sigma}a(z-z^{\prime})F_{\sigma
}b(z^{\prime})dz^{\prime}. \label{compo2}%
\end{gather}
The first formula is often written $c=a\#b$ and $a\#b$ is called the
\textquotedblleft twisted product\textquotedblright\ or \textquotedblleft
Moyal product\textquotedblright\ (see e.g. \cite{Wong}).

Two important properties of the Weyl correspondence already mentioned in the
Introduction are the following:

\begin{proposition}
Let $\widehat{A}\overset{\text{Weyl}}{\longleftrightarrow}a$:

(i) The operator $\widehat{A}$ is formally self-adjoint if and only the symbol
$a$ is real; more generally the symbol of the formal adjoint of an operator
with Weyl symbol $a$ is its complex conjugate $\overline{a}$;

(ii) Let $\widehat{S}\in\operatorname*{Mp}(2n,\sigma)$. We have $\widehat{S}%
^{-1}\widehat{A}\widehat{S}\overset{\text{Weyl}}{\longleftrightarrow}a\circ S$.
\end{proposition}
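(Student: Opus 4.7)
The natural approach is to work directly with the operator integral representation (\ref{ahat}) rather than with the kernel formula (\ref{ahato}), since adjoints and conjugations interact particularly cleanly with $\widehat{T}(z)$ and the symplectic Fourier transform.

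For part (i), I would start by observing that the Heisenberg--Weyl operator is unitary with $\widehat{T}(z)^{*}=\widehat{T}(-z)$, which follows immediately from the explicit formula (\ref{hw}). Formally taking the adjoint inside the integral (\ref{ahat}) and changing variables $z\mapsto-z$ then yields
\[
\widehat{A}^{*}=\left(\tfrac{1}{2\pi}\right)^{n}\int_{\mathbb{R}^{2n}}\overline{F_{\sigma}a(-z)}\,\widehat{T}(z)\,dz .
\]
The remaining task is a one-line check that the symplectic Fourier transform satisfies $\overline{F_{\sigma}a(-z)}=F_{\sigma}\overline{a}(z)$, which is immediate from the definition (\ref{sft}) since the kernel $e^{-i\sigma(z,z')}$ is replaced by its conjugate under complex conjugation and by $e^{i\sigma(z,z')}=e^{-i\sigma(-z,z')}$ under the reflection. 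By the injectivity of the Weyl correspondence this gives $\widehat{A}^{*}\overset{\text{Weyl}}{\longleftrightarrow}\overline{a}$, and specializing to $\widehat{A}=\widehat{A}^{*}$ yields the first assertion.

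For part (ii), the essential ingredient is the metaplectic intertwining property of Heisenberg--Weyl operators,
\[
\widehat{S}^{-1}\widehat{T}(z)\widehat{S}=\widehat{T}(S^{-1}z),\qquad \widehat{S}\in\operatorname{Mp}(2n,\sigma),\ S=\pi(\widehat{S})\in\operatorname{Sp}(2n,\sigma),
\]
which I would invoke as a standard fact about the metaplectic representation. Substituting this into (\ref{ahat}) and performing the change of variables $z\mapsto Sz$ (which has unit Jacobian since $\det S=1$) gives
\[
\widehat{S}^{-1}\widehat{A}\widehat{S}=\left(\tfrac{1}{2\pi}\right)^{n}\int_{\mathbb{R}^{2n}}F_{\sigma}a(Sz)\,\widehat{T}(z)\,dz .
\]
Then one uses the symplectic invariance $\sigma(Su,Sv)=\sigma(u,v)$ together with another change of variables in (\ref{sft}) to obtain the identity $F_{\sigma}a\circ S=F_{\sigma}(a\circ S)$. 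Inserting this yields exactly (\ref{ahat}) for the symbol $a\circ S$, and injectivity of the Weyl map concludes.

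The main obstacle is really only conceptual: one must be comfortable performing the manipulations at the level of the operator integral (\ref{ahat}) for symbols $a\in\mathcal{S}'(\mathbb{R}^{2n})$. For $a\in\mathcal{S}(\mathbb{R}^{2n})$ all steps are literal Fubini/change-of-variable applications; the extension to tempered distributional symbols is then justified either by density of $\mathcal{S}$ in $\mathcal{S}'$ in the weak sense, or by interpreting the integrals against test functions and transferring the operations to the Schwartz space side. The metaplectic intertwining formula for $\widehat{T}(z)$ is the one genuinely nontrivial input, but it is one of the standard building blocks of symplectic covariance and can be cited from the references.
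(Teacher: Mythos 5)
The paper does not in fact supply a proof of this Proposition; it is stated as a recalled standard fact with pointers to Stein and Wong, and its use later is via the cited symplectic covariance property. Your argument is a correct and self-contained proof, and it works at precisely the level the paper itself prefers elsewhere, namely the Weyl--Heisenberg operator integral (\ref{ahat}) together with properties of $F_{\sigma}$ and $\widehat{T}(z)$, which is the same machinery the authors deploy for the analogous statements about $\widetilde{A}_{\omega}$ and the intertwiners $W_{\phi}$.

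Two small checks confirm the key identities you rely on. For (i), from (\ref{sft}) one has
\[
\overline{F_{\sigma}a(-z)}
=\left(\tfrac{1}{2\pi}\right)^{n}\int_{\mathbb{R}^{2n}}e^{\,i\sigma(-z,z')}\,\overline{a(z')}\,dz'
=\left(\tfrac{1}{2\pi}\right)^{n}\int_{\mathbb{R}^{2n}}e^{-i\sigma(z,z')}\,\overline{a(z')}\,dz'
=F_{\sigma}\overline{a}(z),
\]
so together with $\widehat{T}(z)^{*}=\widehat{T}(-z)$ the adjoint does carry Weyl symbol $\overline{a}$, and the self-adjointness equivalence follows by injectivity of the Weyl map. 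For (ii), $F_{\sigma}(a\circ S)(z)=F_{\sigma}a(Sz)$ follows from the substitution $z'\mapsto Sz'$ together with $\sigma(z,S^{-1}z')=\sigma(Sz,z')$ and $\det S=1$, and combined with the metaplectic intertwining $\widehat{S}^{-1}\widehat{T}(z)\widehat{S}=\widehat{T}(S^{-1}z)$ this gives the claim. That intertwining relation is indeed the one genuinely nontrivial ingredient; it is legitimate to cite it (it is proved, for example, in de Gosson's \emph{Symplectic Geometry and Quantum Mechanics}), and your honesty in flagging it as imported is appropriate.

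One route you did not take, but which the references favor, is to argue via the kernel: from (\ref{kxy}) the adjoint has kernel $\overline{K(y,x)}$ and the Fourier inversion (\ref{ak}) then gives (i) directly, while (ii) can be reduced to checking covariance on a set of generators of $\operatorname{Mp}(2n,\sigma)$ via their explicit kernels. Your operator-integral route is arguably cleaner and unifies (i) and (ii) under a single computational pattern, at the cost of taking the $\widehat{T}$-intertwining as an input; either way, your proof is correct and fills a gap the paper leaves to the literature.
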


Here $\operatorname*{Mp}(2n,\sigma)$ is the metaplectic group, that is the
unitary representation of the double cover of $\operatorname*{Sp}(2n,\sigma)$.
To every $S\in\operatorname*{Sp}(2n,\sigma)$ thus corresponds, via the natural
projection $\pi:\operatorname*{Mp}(2n,\sigma)\longrightarrow\operatorname*{Sp}%
(2n,\sigma)$, two operators $\pm\widehat{S}\in\operatorname*{Mp}(2n,\sigma)$.
We note that property (ii) is \emph{characteristic} of the Weyl
pseudo-differential calculus (see Stein \cite{Stein}, Wong \cite{Wong}). We
notice that Unterberger and Upmeier \cite{Unterberger2} have studied similar
covariance formula for more general operators (pseudo-differential operators
of Fuchs type) which occur in the study of boundary problems with edges or corners.

A related well-known object is the cross-Wigner transform $W(u,v)$ of
$u,v\in\mathcal{S}(\mathbb{R}^{n})$; it is defined by%
\begin{equation}
W(u,v)(z)=\left(  \tfrac{1}{2\pi}\right)  ^{n}\int_{\mathbb{R}^{n}}%
e^{-i\xi\cdot y}u(x+\tfrac{1}{2}y)\overline{v(x-\tfrac{1}{2}y)}dy \label{wifi}%
\end{equation}
(it is thus, up to a constant, the Weyl symbol of the operator with kernel
$u\otimes\overline{v}$). We note, for further use, that $W(u,v)$ can
alternatively be defined by the formula%
\begin{equation}
W(u,v)(z)=\pi^{-n}\langle\widehat{T}_{\text{GR}}(z)u,\overline{v}%
\rangle\label{wigroyer}%
\end{equation}
where $\widehat{T}_{\text{GR}}(z)$ is the Grossmann--Royer operator:%
\begin{equation}
\widehat{T}_{\text{GR}}(z_{0})u(x)=e^{2i\xi_{0}\cdot(x-x_{0})}u(2x_{0}-x).
\label{tgr}%
\end{equation}
Formula (\ref{wigroyer}) allows us to define $W(u,v)$ when $u\in
\mathcal{S}^{\prime}(\mathbb{R}^{n})$ and $v\in\mathcal{S}(\mathbb{R}^{n})$;
one can actually extend the mapping $(u,v)\longrightarrow W(u,v)$ into a
continuous mapping $\mathcal{S}^{\prime}(\mathbb{R}^{n})\times\mathcal{S}%
^{\prime}(\mathbb{R}^{n})\longrightarrow\mathcal{S}^{\prime}(\mathbb{R}^{2n}%
)$. The cross-Wigner transform enjoys the following symplectic-covariance
property: if $S\in\operatorname*{Sp}(2n,\sigma)$ then%
\begin{equation}
W(u,v)(S^{-1}z)=W(\widehat{S}u,\widehat{S}v)(z) \label{sycov}%
\end{equation}
where $\widehat{S}\in\operatorname*{Mp}(2n,\sigma)$ has projection $S$. Let
$u,v\in\mathcal{S}(\mathbb{R}^{n})$. The following important property is
sometimes taken as the definition of the Weyl operator $\widehat{A}$:
\begin{equation}
(\widehat{A}u|v)=\int_{\mathbb{R}^{2n}}a(z)W(u,v)(z)dz=\langle a,W(u,v)\rangle
. \label{ww}%
\end{equation}
Also note that the cross-Wigner transform satisfies the Moyal identity
\begin{equation}
(\!(W(u,v) \vert W(u^{\prime},v^{\prime}))\!)=\left(  \tfrac{1}{2\pi}\right)
^{n}(u|u^{\prime})\overline{(v|v^{\prime})}. \label{Moyal}%
\end{equation}

The following formula describes the action of the Heisenberg--Weyl operators:%
\begin{equation}
W(\widehat{T}(z_{0})u,\widehat{T}(z_{1})v)(z)=e^{i[-\sigma(z,z_{0}%
-z_{1})-\frac{1}{2}\sigma(z_{0},z_{1})]}W(u,v)(z-\langle z\rangle);
\label{wt1}%
\end{equation}
where $\langle z\rangle=\frac{1}{2}(z_{0}+z_{1})$; the particular case%
\begin{equation}
W(\widehat{T}(z_{0})u,v)(z)=e^{-i\sigma(z,z_{0})}W(u,v)(z-\tfrac{1}{2}z_{0})
\label{wt2}%
\end{equation}
will be used in our study of intertwining operators.

\subsection{Definition of the operators $\protect\widetilde{A}_{\omega}$}

In what follows $\Omega$ denotes an arbitrary (real) invertible antisymmetric
$2n\times2n$ \ matrix. The formula
\begin{equation}
\omega(z,z^{\prime})=z\cdot\Omega^{-1}z^{\prime}=-\Omega^{-1}z\cdot z^{\prime}
\label{omf}%
\end{equation}
defines a symplectic form on $\mathbb{R}^{2n}$ which coincides with the
standard symplectic form $\sigma$ when $\Omega=J$.

Let us introduce the following variant of the symplectic Fourier transform:

\begin{definition}
\label{noncomsympfour} For $a\in\mathcal{S}(\mathbb{R}^{2n})$ we set
\begin{equation}
F_{\omega}a(z)=\left(  \tfrac{1}{2\pi}\right)  ^{n}|\det\Omega|^{-1/2}%
\int_{\mathbb{R}^{2n}}e^{-i\omega(z,z^{\prime})}a(z^{\prime})dz^{\prime}.
\label{aoum}%
\end{equation}

\end{definition}

Obviously $F_{\omega}$ is a continuous automorphism of $\mathcal{S}%
(\mathbb{R}^{2n})$. Moreover:

\begin{lemma}
\label{lemnoncomsympfour}The automorphism $F_{\omega}$ extends
into a unitary automorphism of $L^{2}(\mathbb{R}^{2n})$ and into a continuous automorphism of $\mathcal{S}%
^{\prime}(\mathbb{R}^{2n})$. Moreover, $F_{\omega}$ is related to
the usual unitary Fourier transform $F$ on $\mathbb{R}^{2n}$ by the formula%
\begin{equation}
Fa(z)=|\det\Omega|^{1/2}F_{\omega}a(-\Omega z). \label{foufou}%
\end{equation}
In particular $F_{\omega}$ is involutive, that is
\begin{equation}
F_{\omega}F_{\omega}a=a. \label{finv}%
\end{equation}

\end{lemma}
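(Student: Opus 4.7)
The plan is to prove the reduction formula (\ref{foufou}) first, since all three assertions follow from it together with standard properties of the Euclidean Fourier transform $F$ on $\mathbb{R}^{2n}$. The crucial algebraic input is the antisymmetry of $\Omega$, which gives $(\Omega^{-1})^T=(\Omega^T)^{-1}=-\Omega^{-1}$ and therefore
$$\omega(-\Omega z,z')\;=\;(-\Omega z)\cdot\Omega^{-1}z'\;=\;(\Omega^{-1})^T(-\Omega)z\cdot z'\;=\;\Omega^{-1}\Omega z\cdot z'\;=\;z\cdot z'.$$
Substituting $-\Omega z$ into the integrand of (\ref{aoum}) therefore converts the kernel $e^{-i\omega(z,z')}$ into the Euclidean kernel $e^{-iz\cdot z'}$, and comparing with the definition of $F$ yields (\ref{foufou}) after trivial rearrangement.

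Rewriting (\ref{foufou}) as $F_\omega a(z)=|\det\Omega|^{-1/2}Fa(-\Omega^{-1}z)$ exhibits $F_\omega$ as $|\det\Omega|^{-1/2}$ times the composition of $F$ with the pullback by the invertible linear map $-\Omega^{-1}$. Changing variables to $w=-\Omega^{-1}z$ shows that this pullback dilates the $L^2$-norm by the factor $|\det\Omega|^{1/2}$; combined with the prefactor $|\det\Omega|^{-1/2}$ and Plancherel for $F$, this gives $|||F_\omega a|||=|||a|||$ on the dense subspace $\mathcal{S}(\mathbb{R}^{2n})$. Surjectivity of both factors provides the unitary extension to $L^2(\mathbb{R}^{2n})$, and the same factorization, interpreted by duality, produces the continuous automorphism of $\mathcal{S}'(\mathbb{R}^{2n})$, since both $F$ and pullback by an invertible linear map extend continuously to $\mathcal{S}'$.

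For involutivity (\ref{finv}) I would iterate the identity $F_\omega a(z)=|\det\Omega|^{-1/2}Fa(-\Omega^{-1}z)$. A direct change of variables in the definition of $F(F_\omega a)$ turns the outer $F$ into $F^2$ and produces $F(F_\omega a)(w)=|\det\Omega|^{1/2}a(-\Omega w)$; here one uses $F^2a(x)=a(-x)$ together with the Jacobian $|\det(-\Omega^{-1})|^{-1}=|\det\Omega|$. Substituting $w=-\Omega^{-1}z$ back into $F_\omega(F_\omega a)(z)=|\det\Omega|^{-1/2}F(F_\omega a)(-\Omega^{-1}z)$ yields $a(-\Omega(-\Omega^{-1}z))=a(z)$. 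No genuine obstacle arises here; the only matter requiring care is sign-bookkeeping for $\Omega$ versus $\Omega^{-1}$ under transposition, all of which is controlled by the single identity $\Omega^T=-\Omega$.
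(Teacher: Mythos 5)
Your derivation of formula (\ref{foufou}) and the resulting unitarity on $L^2(\mathbb{R}^{2n})$ and continuity on $\mathcal{S}'(\mathbb{R}^{2n})$ are the same as the paper's: both hinge on the identity $\omega(-\Omega z,z')=z\cdot z'$ (a consequence of $\Omega^T=-\Omega$), the change of variables $z\mapsto -\Omega z$, and the Plancherel theorem for $F$. The paper packages this as the factorization $F_\omega=U_\Omega I F$ with $U_\Omega$ the unitary dilation and $I$ the reflection; your description of $F_\omega$ as a rescaled composition of $F$ with the pullback by $-\Omega^{-1}$ is the same decomposition written differently.

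Where you genuinely diverge is the proof of involutivity (\ref{finv}). The paper argues through inner products, writing
\[
(\!(F_\omega F_\omega a\mid b)\!) = (\!(F_\omega a\mid F_\omega b)\!) = (\!(U_\Omega I F a\mid U_\Omega I F b)\!) = (\!(a\mid b)\!),
\]
and invoking Parseval and the unitarity of $U_\Omega$ and $I$. The subtlety is that the first equality is not a consequence of unitarity alone; it requires $F_\omega$ to be self-adjoint, which holds because $\overline{e^{-i\omega(z,z')}}=e^{-i\omega(z',z)}$ by antisymmetry of $\omega$, but the paper does not say so explicitly. Your alternative is a direct operator computation: using $F^2a=a(-\cdot)$ and the behaviour of $F$ under composition with the linear map $-\Omega^{-1}$, you find $F(F_\omega a)(w)=|\det\Omega|^{1/2}a(-\Omega w)$ and then substitute back into the defining formula to get $F_\omega(F_\omega a)(z)=a(z)$ pointwise. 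This is correct (I checked the Jacobian $|\det(-\Omega^{-1})|^{-1}=|\det\Omega|$ and the transpose identity $((-\Omega^{-1})^{-1})^T=\Omega$), and it has the advantage of being self-contained: it establishes $F_\omega^2=\mathrm{Id}$ without any appeal to the Hilbert-space structure, and in particular without the implicit self-adjointness step. It also immediately gives the $\mathcal{S}'$ statement of (\ref{finv}) by the same substitution. Either route works; yours is the more elementary and arguably the cleaner of the two.
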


\begin{remark}
Notice that we are using the normalization of the Fourier transform according
to the rule $(2\pi)^{-\mbox{dimension}/2}$. Since we are working in the
phase-space ($\mbox{dimension}=2n$), we have a factor $(2\pi)^{-n}$ rather
than the usual factor $(2\pi)^{-n/2}$.
\end{remark}

\begin{proof}
From $\omega(-\Omega z,z^{\prime})=z\cdot z^{\prime}$, we
immediately recover (\ref{foufou}). From (\ref{foufou}) and the
unitarity of the Fourier transform, we have in
$L^{2}(\mathbb{R}^{2n})$:
\begin{equation}%
\begin{array}
[c]{c}%
|||a|||=|||Fa|||=|\det\Omega|^{1/2}\left(  \int_{\mathbb{R}^{2n}}|F_{\omega
}a(-\Omega z)|^{2}dz\right)  ^{\frac{1}{2}}=\\
\\
=\left(  \int_{\mathbb{R}^{2n}}|F_{\omega}a(z^{\prime})|^{2}dz^{\prime
}\right)  ^{\frac{1}{2}}=|||F_{\omega}a|||,
\end{array}
\label{foufou2}%
\end{equation}
where we performed the substitution $z^{\prime}=-\Omega z$. Consequently,
$F_{\omega}$ extends into a unitary automorphism of $L^{2}(\mathbb{R}^{2n})$.
The symplectic Fourier transform $F_{\omega}$ also extends into a continuous
automorphism of $\mathcal{S}^{\prime}(\mathbb{R}^{2n})$ in the usual way by
defining $F_{\omega}a$ for $a\in\mathcal{S}^{\prime}(\mathbb{R}^{2n})$ by the
formula $\langle F_{\omega}a,b\rangle=\langle a,F_{\omega}b\rangle$ for all
$b\in\mathcal{S}(\mathbb{R}^{2n})$ (or, alternatively, by using the relation
(\ref{foufou}) above). Note that when $\Omega=J$ we have $F_{\omega}%
=F_{\sigma}$ (the ordinary symplectic Fourier transform) since $\det J=1$.
Using formula (\ref{foufou}) the symplectic Fourier transform $F_{\omega}$ can
thus be written:
\begin{equation}
F_{\omega}=U_{\Omega}IF\label{foufou3}%
\end{equation}
where $U_{\Omega}$ and $I$ are the transformations defined by
\begin{equation}
(U_{\Omega}a)(z)=|\det\Omega|^{1/2}a(\Omega^{-1}z),\hspace{1cm}%
(Ia)(z)=a(-z)\label{foufou4}%
\end{equation}
for which trivially:
\begin{equation}
(\!( U_{\Omega}a\vert U_{\Omega}b)\!)=(\!( a\vert
b)\!),\hspace{1cm}(\!(
Ia\vert Ib)\!)=(\!( a \vert b)\!)\label{foufou5}%
\end{equation}
for all $a,b\in L^{2}(\mathbb{R}^{2n})$. From (\ref{foufou5}) and the Parseval
identity, it follows that for all $a,b\in L^{2}(\mathbb{R}^{2n})$:
\begin{equation}
(\!( F_{\omega}F_{\omega}a\vert b)\!)=(\!( F_{\omega}a\vert F_{\omega}%
b)\!)=(\!( U_{\Omega}IFa\vert U_{\Omega}IFb)\!)=(\!( a\vert b)\!)
\label{foufou6}%
\end{equation}
which proves (\ref{finv}).
\end{proof}

In the sequel we will also need the operators
\[
\widetilde{T}_{\omega}(z_{0}):\mathcal{S}^{\prime}(\mathbb{R}^{2n}%
)\longrightarrow\mathcal{S}^{\prime}(\mathbb{R}^{2n})
\]
defined by the formula
\begin{equation}
\widetilde{T}_{\omega}(z_{0})U(z)=e^{-i\omega(z,z_{0})}U(z-\tfrac{1}{2}z_{0}).
\label{hwbis}%
\end{equation}
These operators satisfy the same commutation relations as the usual
Heisenberg--Weyl operators $\widehat{T}(z_{0})$ when $\omega=\sigma$. In fact,
a straightforward computation shows that%
\begin{align}
\widetilde{T}_{\omega}(z_{0}+z_{1})  &  =e^{-\tfrac{i}{2}\omega(z_{0},z_{1}%
)}\widetilde{T}_{\omega}(z_{0})\widetilde{T}_{\omega}(z_{1})\label{tom1}\\
\widetilde{T}_{\omega}(z_{0})\widetilde{T}_{\omega}(z_{1})  &  =e^{i\omega
(z_{0},z_{1})}\widetilde{T}_{\omega}(z_{1})\widetilde{T}_{\omega}(z_{0}).
\label{tom2}%
\end{align}
Let us justify the introduction of the operators $\widetilde{T}_{\omega}%
(z_{0})$ with an informal discussion; after all it is not obvious at this
stage why they should allow us to implement the \textquotedblleft
quantization\textquotedblright\ (\ref{cc4})--(\ref{atild1})! Recall
\cite{Birk} that the introduction of the usual Heisenberg--Weyl operator
$\widehat{T}(z_{0})=e^{-i\sigma(\widehat{z},z_{0})}$ can be motivated as
follows: consider the translation Hamiltonian $H_{z_{0}}(z)=\sigma(z,z_{0})$;
the operator with this Weyl symbol is $\widehat{H}_{z_{0}}(z)=\sigma
(\widehat{Z},z_{0})$ and the solution of the corresponding Schr\"{o}dinger
equation%
\[
i\frac{\partial}{\partial t}u=\widehat{H}_{z_{0}}u\text{ \ , \ }%
u(x,0)=u_{0}(x)
\]
is formally given by $u(x,t)=e^{-it\sigma(\widehat{Z},z_{0})}u_{0}(x)$; a
direct calculation shows that we have the explicit formula
\[
u(x,t)=e^{- it\sigma( \widehat{Z},z_{0})}u_{0}(x)=e^{i(t\xi_{0}\cdot
x-\frac{1}{2}t^{2}\xi_{0}\cdot x_{0})}u_{0}(x-tx_{0})
\]
hence $\widehat{T}(z_{0})u(x,0)=u(x,1)$. To define the operators
$\widetilde{T}_{\omega}(z_{0})$ one proceeds exactly in the same way:
replacing the Hamiltonian operator $\widehat{H}_{z_{0}}(z)=\sigma
(\widehat{Z},z_{0})$ with%
\[
\widetilde{H}_{z_{0}}(z)=\omega(\widetilde{Z},z_{0})=\omega(z+\tfrac{1}%
{2}i\Omega\partial_{z},z_{0})
\]
we are led to the \textquotedblleft phase space Schr\"{o}dinger
equation\textquotedblright%
\[
i\frac{\partial}{\partial t}U=\omega(\widetilde{Z},z_{0})U\text{ \ ,
\ }U(z,0)=U_{0}(z)
\]
whose solution is
\[
U(z,t)=e^{- it\omega(\widetilde{Z},z_{0})}U_{0}(z)=e^{-it\omega(z,z_{0})}%
U_{0}(z-\tfrac{1}{2}tz_{0}).
\]
We thus have%
\[
U(z,1)=\widetilde{T}_{\omega}(z_{0})U_{0}(z)=e^{- i\omega(\widetilde{Z}%
,z_{0})}U_{0}(z).
\]

Let us now define the operators $\widetilde{A}_{\omega}$. Comparing with the
definition (\ref{ahat}) of the usual Weyl operators these considerations
suggest that we define $\widetilde{A}_{\omega}=a(\widetilde{Z})$ by the
formula
\begin{equation}
\widetilde{A}_{\omega}U=\left(  \tfrac{1}{2\pi}\right)  ^{n}|\det
\Omega|^{-1/2}\int_{\mathbb{R}^{2n}}F_{\omega}a(z)\widetilde{T}_{\omega
}(z)Udz. \label{atilde}%
\end{equation}
This \textquotedblleft guess\textquotedblright\ is justified by the following
result which identifies the Weyl symbol of the operator $\widetilde{A}%
_{\omega}$ defined by the formula above:

\begin{proposition}
\label{propone}Let $a\in\mathcal{S}^{\prime}(\mathbb{R}^{2n})$ and
$U\in\mathcal{S}(\mathbb{R}^{2n})$. The operator $\widetilde{A}_{\omega
}:\mathcal{S}(\mathbb{R}^{2n})\longrightarrow\mathcal{S}^{\prime}%
(\mathbb{R}^{2n})$ defined by%
\begin{equation}
\widetilde{A}_{\omega}U=\left(  \tfrac{1}{2\pi}\right)  ^{n}|\det
\Omega|^{-1/2}\langle F_{\omega}a(\cdot),\widetilde{T}_{\omega}(\cdot
)U\rangle\label{acroch}%
\end{equation}
that is, formally, by (\ref{atilde}) is continuous and its Weyl symbol
$\widetilde{a}_{\omega}$ is given by the formula%
\begin{equation}
\widetilde{a}_{\omega}(z,\zeta)=a\left(  z-\tfrac{1}{2}\Omega\zeta\right)
\label{azj}%
\end{equation}
and we have $\widetilde{a}_{\omega}\in\mathcal{S}^{\prime}(\mathbb{R}%
^{2n}\oplus\mathbb{R}^{2n})$. When $a=1$ the operator $\widetilde{A}_{\omega}$
is the identity on $\mathcal{S}(\mathbb{R}^{2n})$.
\end{proposition}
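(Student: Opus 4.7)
The plan is to compute the Weyl symbol of $\widetilde{A}_\omega$ by first writing it as an integral operator, extracting its Schwartz kernel, and then inverting the standard Weyl correspondence via (\ref{ak}).

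First, assuming $a \in \mathcal{S}(\mathbb{R}^{2n})$ and $U \in \mathcal{S}(\mathbb{R}^{2n})$, I would substitute (\ref{hwbis}) into (\ref{atilde}) and perform the change of variable $z' = z - \tfrac{1}{2}z_0$ (using $\omega(z,z) = 0$) to read off the kernel
\begin{equation*}
\widetilde{K}(z,z') = 2^{2n}\left(\tfrac{1}{2\pi}\right)^n |\det\Omega|^{-1/2}\, F_\omega a\bigl(2(z-z')\bigr)\, e^{2i\omega(z,z')}.
\end{equation*}

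Next, I would plug this into the inversion formula (\ref{ak}) applied in the ambient space $\mathbb{R}^{2n}$, i.e. with phase variables $(z,\zeta) \in \mathbb{R}^{4n}$ and dual variable $w$. The skew-symmetry of $\Omega^{-1}$ simplifies $\omega(z + \tfrac{1}{2}w, z - \tfrac{1}{2}w)$ to $-\omega(z,w)$, and a rescaling $w' = 2w$ reduces the symbol to
\begin{equation*}
\widetilde{a}_\omega(z,\zeta) = \left(\tfrac{1}{2\pi}\right)^n |\det\Omega|^{-1/2} \int_{\mathbb{R}^{2n}} e^{-i(\zeta/2 - \Omega^{-1}z)\cdot w'}\, F_\omega a(w')\, dw'.
\end{equation*}
Invoking (\ref{foufou}) to trade $F_\omega a$ for $Fa$ (with a compensating Jacobian $|\det\Omega|$), followed by the Fourier inversion identity $F^2 = I$, the $|\det\Omega|^{\pm 1/2}$ factors cancel and the integral evaluates to $a\bigl(-\Omega(\zeta/2 - \Omega^{-1}z)\bigr) = a(z - \tfrac{1}{2}\Omega\zeta)$, which is exactly (\ref{azj}).

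To extend from $a \in \mathcal{S}(\mathbb{R}^{2n})$ to general $a \in \mathcal{S}'(\mathbb{R}^{2n})$, I would observe that the pullback under the linear submersion $(z,\zeta) \mapsto z - \tfrac{1}{2}\Omega\zeta$ is continuous $\mathcal{S}'(\mathbb{R}^{2n}) \to \mathcal{S}'(\mathbb{R}^{4n})$, and that the distributional pairing defining $\widetilde{A}_\omega$ in (\ref{acroch}) is continuous in $a \in \mathcal{S}'(\mathbb{R}^{2n})$. Combined with the topological isomorphism $\mathcal{S}'(\mathbb{R}^{4n}) \simeq \mathcal{L}(\mathcal{S}(\mathbb{R}^{2n}), \mathcal{S}'(\mathbb{R}^{2n}))$ recalled in Subsection \ref{subsecone}, density of $\mathcal{S}$ in $\mathcal{S}'$ transfers both the continuity of $\widetilde{A}_\omega$ and the symbol identity to the general case. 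The case $a = 1$ then forces $\widetilde{a}_\omega \equiv 1$, whose Weyl operator on $\mathbb{R}^{2n}$ is the identity.

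The main obstacle I anticipate is purely bookkeeping: tracking the constants $|\det\Omega|^{\pm 1/2}$, powers of $2$ from the rescalings, and powers of $2\pi$ from the various Fourier normalizations, while carefully respecting the sign conventions implicit in $\omega(z,z') = z \cdot \Omega^{-1}z'$ and $(\Omega^{-1})^{T} = -\Omega^{-1}$. Once these are pinned down the identification of $\widetilde{a}_\omega$ is a direct Fourier inversion, and the extension to tempered distributions is standard.
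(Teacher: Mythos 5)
Your proposal follows essentially the same route as the paper: extract the Schwartz kernel of $\widetilde{A}_\omega$, insert it into the Weyl-symbol inversion formula (\ref{ak}), and simplify by a Fourier-type inversion, then pass to general $a\in\mathcal{S}'$ by density and the kernel theorem. One small slip in the write-up: with the normalization used here $F^2$ is not the identity but the parity operator $F^2a(q)=a(-q)$; the paper sidesteps this by invoking the involutivity $F_\omega F_\omega=I$ of Lemma~\ref{lemnoncomsympfour} directly, whereas you detour through $F$ via (\ref{foufou}). Your final expression $a\bigl(-\Omega(\zeta/2-\Omega^{-1}z)\bigr)$ does carry the required reflection, so the conclusion is unaffected, but the identity should be stated as $F^{2}=P$ (or simply use $F_\omega^2=I$ as the paper does).
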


\begin{proof}
Since $\widetilde{T}_{\omega}(z)U\in\mathcal{S}(\mathbb{R}^{2n})$ for every
$z$ and $F_{\omega}a\in\mathcal{S}^{\prime}(\mathbb{R}^{2n})$ the operator
$\widetilde{A}_{\omega}$ is well-defined. We have, setting $u=z-\frac{1}%
{2}z_{0}$,%
\begin{align*}
\widetilde{A}_{\omega}U(z)  &  =\left(  \tfrac{1}{2\pi}\right)  ^{n}%
|\det\Omega|^{-1/2}\int_{\mathbb{R}^{2n}}F_{\omega}a(z_{0})\widetilde{T}%
_{\omega}(z_{0})U(z)dz_{0}\\
&  =\left(  \tfrac{1}{2\pi}\right)  ^{n}|\det\Omega|^{-1/2}\int_{\mathbb{R}%
^{2n}}F_{\omega}a(z_{0})e^{-i\omega(z,z_{0})}U(z-\tfrac{1}{2}z_{0})dz_{0}\\
&  =\left(  \tfrac{2}{\pi}\right)  ^{n}|\det\Omega|^{-1/2}\int_{\mathbb{R}%
^{2n}}F_{\omega}a[2(z-u)]e^{2i\omega(z,u)}U(u)du
\end{align*}
hence the kernel of $\widetilde{A}_{\omega}$ is given by the formula%
\[
K(z,u)=\left(  \tfrac{2}{\pi}\right)  ^{n}|\det\Omega|^{-1/2}F_{\omega
}a[2(z-u)]e^{2i\omega(z,u)}.
\]
It follows from formula (\ref{ak}) that the symbol $\widetilde{a}_{\omega}$ is
given by%
\begin{align*}
\widetilde{a}_{\omega}(z,\zeta)  &  =\int_{\mathbb{R}^{2n}}e^{-i\zeta
\cdot\zeta^{\prime}}K(z+\tfrac{1}{2}\zeta^{\prime},z-\tfrac{1}{2}\zeta
^{\prime})d\zeta^{\prime}\\
&  =\left(  \tfrac{2}{\pi}\right)  ^{n}|\det\Omega|^{-1/2}\int_{\mathbb{R}%
^{2n}}e^{-i\zeta\cdot\zeta^{\prime}}F_{\omega}a(2\zeta^{\prime})e^{-2i\omega
(z,\zeta^{\prime})}d\zeta^{\prime}%
\end{align*}
that is, using the obvious relation
\[
\zeta\cdot\zeta^{\prime}+2\omega(z,\zeta^{\prime})=\omega(2z-\Omega\zeta
,\zeta^{\prime})
\]
together with the change of variables $z^{\prime}=2\zeta^{\prime}$,
\begin{align*}
\widetilde{a}_{\omega}(z,\zeta)  &  =\left(  \tfrac{2}{\pi}\right)  ^{n}%
|\det\Omega|^{-1/2}\int_{\mathbb{R}^{2n}}e^{-i\omega(2z-\Omega\zeta
,\zeta^{\prime})}F_{\omega}a(2\zeta^{\prime})d\zeta^{\prime}\\
&  =\left(  \tfrac{1}{2\pi}\right)  ^{n}|\det\Omega|^{-1/2}\int_{\mathbb{R}%
^{2n}}e^{-i\omega(z-\frac{1}{2}\Omega\zeta,z^{\prime})}F_{\omega}a(z^{\prime
})d z^{\prime}.
\end{align*}
Formula (\ref{azj}) immediately follows using the Fourier inversion formula
(\ref{finv}). That $\widetilde{A}_{\omega}=I$ when $a=1$ immediately follows
from the fact that $F_{\omega}a=(2\pi)^{n} | \det\Omega|^{1/2} \delta$ where
$\delta$ is the Dirac measure on $\mathbb{R}^{2n}$. The continuity statement
follows from the fact that $\widetilde{A}_{\omega}$ is a Weyl operator.
\end{proof}

Two immediate consequences of this result are:

\begin{corollary}
The operators $\widetilde{A}_{\omega}$ have the following properties:

(i) The operator $\widetilde{A}_{\omega}$ defined by (\ref{atilde}) is
formally self-adjoint if and only if $a$ is real;

(ii) The formal adjoint $\widetilde{A}_{\omega}^{\ast}$ of $\widetilde{A}%
_{\omega}$ is obtained by replacing $a$ with its complex conjugate
$\overline{a}$;

(iii) The symbol $\widetilde{c}$ of $\widetilde{C}_{\omega}=\widetilde{A}%
_{\omega}\widetilde{B}_{\omega}$ is given by $\widetilde{c}_{\omega}%
(z,\zeta)=c\left(  z-\tfrac{1}{2}\Omega\zeta\right)  $ where $c=a\#b$ is the
Weyl symbol of the operator $\widehat{C}=\widehat{A}\widehat{B}$.
\end{corollary}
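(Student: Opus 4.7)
The plan is to deduce all three parts from Proposition \ref{propone}, which identifies $\widetilde{A}_\omega$ as the Weyl operator on $\mathbb{R}^{2n}$ with Weyl symbol $\widetilde{a}_\omega(z,\zeta)=a(z-\tfrac12\Omega\zeta)$. A key observation used throughout is that, since $\Omega$ is invertible (indeed any point of $\mathbb{R}^{2n}$ is hit already by setting $\zeta=0$), the map $(z,\zeta)\mapsto z-\tfrac12\Omega\zeta$ is surjective onto $\mathbb{R}^{2n}$, so any pointwise property of $\widetilde{a}_\omega$ is equivalent to the corresponding property of $a$.

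For (ii), the standard Weyl calculus recalled in Section \ref{subsecone} tells us that the formal adjoint of a Weyl operator with symbol $s$ has Weyl symbol $\overline{s}$. Applied to $\widetilde{A}_\omega$ this produces a Weyl operator with symbol $\overline{\widetilde{a}_\omega}(z,\zeta)=\overline{a}(z-\tfrac12\Omega\zeta)=\widetilde{(\overline{a})}_\omega(z,\zeta)$; reading Proposition \ref{propone} in the reverse direction identifies this operator with $\widetilde{(\overline{a})}_\omega$. Part (i) is then the special case $\overline{a}=a$ combined with the surjectivity remark: $\widetilde{a}_\omega$ is real-valued if and only if $a$ is.

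For (iii) I would work directly from the integral representation (\ref{atilde}), writing
\[
\widetilde{A}_\omega\widetilde{B}_\omega=\left(\tfrac{1}{2\pi}\right)^{2n}|\det\Omega|^{-1}\iint F_\omega a(z_0)F_\omega b(z_1)\,\widetilde{T}_\omega(z_0)\widetilde{T}_\omega(z_1)\,dz_0\,dz_1,
\]
then applying (\ref{tom1}) to substitute $\widetilde{T}_\omega(z_0)\widetilde{T}_\omega(z_1)=e^{(i/2)\omega(z_0,z_1)}\widetilde{T}_\omega(z_0+z_1)$. The change of variable $z=z_0+z_1$, together with the antisymmetry identity $\omega(z-z_1,z_1)=\omega(z,z_1)$, rearranges the double integral into an expression of the form (\ref{atilde}) with Fourier symbol
\[
F_\omega c(z)=\left(\tfrac{1}{2\pi}\right)^{n}|\det\Omega|^{-1/2}\int F_\omega a(z-z_1)F_\omega b(z_1)e^{(i/2)\omega(z,z_1)}\,dz_1,
\]
which is the exact $\omega$-analogue of (\ref{compo2}). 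This identifies $c$ as the appropriate twisted product $a\#b$, so $\widetilde{A}_\omega\widetilde{B}_\omega=\widetilde{C}_\omega$ in the sense of definition (\ref{atilde}), and a final application of Proposition \ref{propone} delivers the Weyl symbol $c(z-\tfrac12\Omega\zeta)$.

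The principal technical obstacle is rigour: the manipulations above are purely formal when $a,b\in\mathcal{S}^\prime(\mathbb{R}^{2n})$, so I would first establish the composition formula for $a,b\in\mathcal{S}(\mathbb{R}^{2n})$, where Fubini and the change of variable are legitimate and the continuity of $\widetilde{A}_\omega$ supplied by Proposition \ref{propone} turns every step into an honest integral of Schwartz functions, and then extend to distributional symbols by the standard density and duality arguments whenever the product $\widetilde{A}_\omega\widetilde{B}_\omega$ is defined.
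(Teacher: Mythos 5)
Your treatment of parts (i) and (ii) is essentially the paper's: read Proposition \ref{propone} to identify the Weyl symbol of $\widetilde{A}_{\omega}$ as $a(z-\tfrac12\Omega\zeta)$, invoke the standard fact that the formal adjoint of a Weyl operator has the conjugate symbol, and observe that $a\mapsto a(z-\tfrac12\Omega\zeta)$ preserves real-valuedness (both directions, thanks to surjectivity of $(z,\zeta)\mapsto z-\tfrac12\Omega\zeta$). That part is fine.

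Part (iii) is where there is a genuine gap. Your computation up to
\[
F_{\omega}c(z)=\left(\tfrac{1}{2\pi}\right)^{n}|\det\Omega|^{-1/2}\int F_{\omega}a(z-z_{1})\,F_{\omega}b(z_{1})\,e^{\tfrac{i}{2}\omega(z,z_{1})}\,dz_{1}
\]
is correct, and this does define a symbol $c$ with $\widetilde{A}_{\omega}\widetilde{B}_{\omega}=\widetilde{C}_{\omega}$ in the sense of (\ref{atilde}). But the very next sentence, \emph{``this identifies $c$ as the appropriate twisted product $a\#b$,''} is exactly the step that fails: the displayed formula is the \emph{$\omega$-twisted product}, not the standard Moyal product (\ref{compo2}). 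These coincide only when $\Omega=J$. A quick way to see that they differ in general is to take $a(z)=z_{\alpha}$, $b(z)=z_{\beta}$: the direct commutator of the Bopp-type operators is $[\widetilde{Z}_{\alpha},\widetilde{Z}_{\beta}]=i\omega_{\alpha\beta}$ (this is (\ref{CCR2}), and follows immediately from $\widetilde{Z}=z+\tfrac{i}{2}\Omega\partial_{z}$), whereas the Moyal commutator $z_{\alpha}\#z_{\beta}-z_{\beta}\#z_{\alpha}=ij_{\alpha\beta}$ (this is (\ref{CCR1bis})) would, after $\omega$-quantization, give $ij_{\alpha\beta}\cdot I$. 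Since $\Omega\ne J$ in general, the symbol $c$ produced by your (correct) composition formula is \emph{not} $a\#b$, and the conclusion of (iii) does not follow. The same objection applies to the paper's one-line justification ``the property is an immediate consequence of the definition of $\widetilde{C}_{\omega}$ since $a\#b\leftrightarrow\widehat{A}\widehat{B}$,'' which asserts rather than proves the identification; one can also see the difficulty through the intertwining (\ref{sfund}): $\widetilde{A}_{\omega}\widetilde{B}_{\omega}W_{f,\phi}=W_{f,\phi}\widehat{A^{\prime}}\widehat{B^{\prime}}$ with $\widehat{A^{\prime}}\widehat{B^{\prime}}\leftrightarrow(a\circ f)\#(b\circ f)$, and this equals $(a\#b)\circ f$ only when $f\in\operatorname{Sp}(2n,\sigma)$, i.e.\ when $\Omega=J$. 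What you have actually proven is the correct statement with $c$ replaced by the $\omega$-twisted product $a\#_{\omega}b$; you should either record that weaker (but correct) conclusion, or explicitly verify the equality $a\#_{\omega}b=a\#b$ and discover that it forces $\Omega=J$.
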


\begin{proof}
(i) The property is obvious since $\widetilde{A}_{\omega}$ is formally
self-adjoint if and only if its Weyl symbol $\widetilde{a}_{\omega}$ is real,
that is if and only if $a$ itself is real. (ii) Similarly, the Weyl symbol of
$\widetilde{A}_{\omega}^{\ast}$ is the function
\[
(z,\zeta)\longmapsto\overline{a\left(  z-\tfrac{1}{2}\Omega\zeta\right)  }.
\]
(iii) The property is an immediate consequence of the definition of
$\widetilde{C}_{\omega}$ since $a\#b\overset{\text{Weyl}}{\longleftrightarrow
}\widehat{A}\widehat{B}$.
\end{proof}

\subsection{Symplectic transformation properties}

Let $\omega$ be the symplectic form (\ref{omf}) on $\mathbb{R}^{n}%
\oplus\mathbb{R}^{n}$. The symplectic spaces $(\mathbb{R}^{n}\oplus
\mathbb{R}^{n},\omega)$ and $(\mathbb{R}^{n}\oplus\mathbb{R}^{n},\sigma)$ are
linearly symplectomorphic. That is, there exists a linear automorphism $f$ of
$\mathbb{R}^{2n}$ such that $f^{\ast}\omega=\sigma$ that is
\begin{equation}
\omega(fz,fz^{\prime})=\sigma(z,z^{\prime}) \label{omsig}%
\end{equation}
for all $(z,z^{\prime})\in\mathbb{R}^{2n}\times\mathbb{R}^{2n}$ (this can be
viewed as a linear version of Darboux's theorem). The proof is
straightforward: choose a symplectic basis $\mathcal{B}$ of $(\mathbb{R}%
^{n}\oplus\mathbb{R}^{n},\omega)$ and a symplectic basis $\mathcal{B}^{\prime
}$ of $(\mathbb{R}^{n}\oplus\mathbb{R}^{n},\sigma)$. Then any linear
automorphism $f$ of $\mathbb{R}^{n}\oplus\mathbb{R}^{n}$ such that
$f(\mathcal{B}^{\prime})=\mathcal{B}$ satisfies (\ref{omsig}). Identifying the
automorphism $f$ with its matrix in the canonical basis, the relation
(\ref{omsig}) is equivalent to the matrix equality%
\begin{equation}
\Omega=fJf^{T}. \label{oj}%
\end{equation}
Such a symplectomorphism $f:(\mathbb{R}^{2n},\sigma)\longrightarrow
(\mathbb{R}^{2n},\omega)$ is by no means unique; we can in fact replace it by
any automorphism $f^{\prime}=fS_{\sigma}$ where $S_{\sigma}\in
\operatorname*{Sp}(2n,\sigma)$; note however that the determinant is an
invariant because we have
\[
\det f^{\prime}=\det f\det S_{\sigma}=\det f
\]
since $\det S_{\sigma}=1$. The symplectic groups $\operatorname*{Sp}%
(\mathbb{R}^{2n},\omega)$ and $\operatorname*{Sp}(\mathbb{R}^{2n},\sigma)$ are
canonically isomorphic.

We are going to see that the study of the operators $\widetilde{A}_{\omega}$
is easily reduced to the case where $\omega=\sigma$, the standard symplectic
form on $\mathbb{R}^{2n}$. This result is closely related to the symplectic
covariance of Weyl operators under metaplectic conjugation as we will see below.

For $f$ a linear automorphism of $\mathbb{R}^{2n}$ we define the operator
\[
M_{f}:\mathcal{S}^{\prime}(\mathbb{R}^{2n})\longrightarrow\mathcal{S}^{\prime
}(\mathbb{R}^{2n})
\]
by the formula
\begin{equation}
M_{f}U(z)=\sqrt{|\det f|}U(fz)\text{.} \label{pushpush}%
\end{equation}
Clearly $M_{f}$ is unitary: we have $|||M_{f}U|||=|||U|||$ for all $U\in
L^{2}(\mathbb{R}^{2n})$.

\begin{notation}
When $\Omega=J$ we write $\widetilde{T}(z_{0})=\widetilde{T}_{\sigma}(z_{0})$
and $\widetilde{A}=\widetilde{A}_{\sigma}$.
\end{notation}

\begin{proposition}
\label{proptwo}Let $f:(\mathbb{R}^{2n},\sigma)\longrightarrow(\mathbb{R}%
^{2n},\omega)$ be a linear symplectomorphism.

(i) We have the conjugation formulas%
\begin{gather}
M_{f}\widetilde{T}_{\omega}(z_{0})=\widetilde{T}(f^{-1}z_{0})M_{f}\text{ ,
}M_{f}F_{\omega}=F_{\sigma}M_{f}\text{ \ \ }\label{equ}\\
M_{f}\widetilde{A}_{\omega}=\widetilde{A^{\prime}}M_{f}\text{\ with
\ }a^{\prime}(z)=a(fz). \label{equa}%
\end{gather}

(ii) When $f$ is replaced by an automorphism $f^{\prime}=fS_{\sigma}$ with
$S_{\sigma}\in\operatorname*{Sp}(2n,\sigma)$ then $\widetilde{A^{\prime}}$ is
replaced by the operator%
\begin{equation}
\widetilde{A^{\prime\prime}}=M_{S_{\sigma}}\widetilde{A^{\prime}}M_{S_{\sigma
}}^{-1} \label{conjug}%
\end{equation}
where $M_{S_{\sigma}}U(z)=U(S_{\sigma}z)$.
\end{proposition}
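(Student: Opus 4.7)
The plan is to prove the three intertwining relations in (i) by direct computation and then derive (ii) as a formal consequence, so the bulk of the work is in (i). A preliminary observation that controls all the determinant bookkeeping: relation (\ref{oj}) gives $\det\Omega=(\det f)^{2}\det J=(\det f)^{2}$, hence $|\det\Omega|^{1/2}=|\det f|$. This identity is what makes the normalization factors in the definitions of $F_{\omega}$ and $\widetilde{A}_{\omega}$ reduce to the standard ones under the change of variables $z'=fw$, so it will appear repeatedly.

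For the first identity in (\ref{equ}), I would apply both sides to an arbitrary $U\in\mathcal{S}(\mathbb{R}^{2n})$ using the explicit formulas (\ref{hwbis}) and (\ref{pushpush}). The left-hand side yields $\sqrt{|\det f|}\,e^{-i\omega(fz,z_{0})}U(fz-\tfrac{1}{2}z_{0})$ while the right-hand side gives $\sqrt{|\det f|}\,e^{-i\sigma(z,f^{-1}z_{0})}U(fz-\tfrac{1}{2}z_{0})$, so the claim reduces to $\omega(fz,z_{0})=\sigma(z,f^{-1}z_{0})$, which is just (\ref{omsig}) applied to $(z,f^{-1}z_{0})$. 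For the second identity in (\ref{equ}), I would substitute $z'=fw$ in the defining integral (\ref{aoum}) of $F_{\omega}a(fz)$; the Jacobian $|\det f|$ together with the $|\det\Omega|^{-1/2}$ factor collapses to $1$ by the preliminary observation, and $\omega(fz,fw)=\sigma(z,w)$ turns the resulting integral into $F_{\sigma}(a\circ f)(z)=F_{\sigma}(M_{f}a/\sqrt{|\det f|})(z)$; the prefactor $\sqrt{|\det f|}$ from $M_{f}$ then matches.

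For (\ref{equa}), I would insert (\ref{atilde}) and push $M_{f}$ inside the integral defining $\widetilde{A}_{\omega}U$. Using the just-established relation $\widetilde{T}_{\omega}(z)=M_{f}^{-1}\widetilde{T}(f^{-1}z)M_{f}$ and changing variable $z=fw$, the integrand becomes a Heisenberg--Weyl operator $\widetilde{T}(w)$ acting on $M_{f}U$, multiplied by $F_{\omega}a(fw)$. The intertwining $M_{f}F_{\omega}=F_{\sigma}M_{f}$ rewrites $F_{\omega}a(fw)=F_{\sigma}a'(w)$ where $a'=a\circ f$, and all $|\det f|$ and $|\det\Omega|^{\pm 1/2}$ factors cancel by the preliminary identity; what remains is precisely $\widetilde{A'}M_{f}U$ since $\omega=\sigma$ on this side (so the normalization in (\ref{atilde}) is now just $(2\pi)^{-n}$).

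For (ii), I would observe that $M_{f S_{\sigma}}=M_{S_{\sigma}}M_{f}$ because $\det S_{\sigma}=1$, and that $a''(z):=a(fS_{\sigma}z)=a'(S_{\sigma}z)$. Applying (i) to the symplectomorphism $f'=fS_{\sigma}:(\mathbb{R}^{2n},\sigma)\to(\mathbb{R}^{2n},\omega)$ yields $M_{S_{\sigma}}M_{f}\widetilde{A}_{\omega}=\widetilde{A''}M_{S_{\sigma}}M_{f}$; substituting $M_{f}\widetilde{A}_{\omega}=\widetilde{A'}M_{f}$ from (i) and cancelling $M_{f}$ on the right gives (\ref{conjug}). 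I expect the main obstacle to be nothing deep, only strict accounting of the constants $|\det f|$, $|\det\Omega|^{1/2}$ and the Jacobian of each substitution; the identity $|\det\Omega|^{1/2}=|\det f|$ derived from (\ref{oj}) is what makes everything match.
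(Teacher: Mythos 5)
Your proposal is correct and follows essentially the same route as the paper: verify the two relations in (\ref{equ}) by applying both sides to a test function and changing variables, using $\omega(fz,\cdot)=\sigma(z,f^{-1}\cdot)$ and the determinant identity $|\det\Omega|^{1/2}=|\det f|$ from (\ref{oj}) to absorb all normalization constants, then deduce (\ref{equa}) by pushing $M_f$ through the integral in (\ref{atilde}), and finally obtain (\ref{conjug}) by applying (i) to both $f$ and $f'=fS_\sigma$ and using $M_{f'}=M_{S_\sigma}M_f$. (One small imprecision: $M_{fS_\sigma}=M_{S_\sigma}M_f$ is the general composition law $M_gM_f=M_{fg}$ and does not need $\det S_\sigma=1$; that condition is instead what gives the prefactor-free form $M_{S_\sigma}U(z)=U(S_\sigma z)$.)
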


\begin{proof}
(i) Since $\omega(fz,z_{0})=\sigma(z,f^{-1}z_{0})$ we have for all
$U\in\mathcal{S}^{\prime}(\mathbb{R}^{2n})$,%
\begin{align*}
M_{f}\left[  \widetilde{T}_{\omega}(z_{0})U\right]  (z)  &  =\sqrt{|\det
f|}e^{-i\omega(fz,z_{0})}U(fz-\tfrac{1}{2}z_{0})\\
&  =\sqrt{|\det f|}e^{-i\sigma(z,f^{-1}z_{0})}U(f(z-\tfrac{1}{2}f^{-1}%
z_{0}))\\
&  =e^{-i\sigma(z,f^{-1}z_{0})}M_{f}U(z-\tfrac{1}{2}f^{-1}z_{0})\\
&  =\widetilde{T}(f^{-1}z_{0})M_{f}U(z)
\end{align*}
which is equivalent to the first equality (\ref{equ}). We have likewise for
$a\in\mathcal{S}^{\prime}(\mathbb{R}^{2n})$%
\begin{align*}
M_{f}F_{\omega}a(z)  &  =\sqrt{|\det f|}F_{\omega}a(fz)\\
&  =\left(  \tfrac{1}{2\pi}\right)  ^{n}|\det\Omega|^{-1/2}\sqrt{|\det f|}%
\int_{\mathbb{R}^{2n}}e^{-i\omega(fz,z^{\prime})}a(z^{\prime})dz^{\prime}\\
&  =\left(  \tfrac{1}{2\pi}\right)  ^{n}|\det\Omega|^{-1/2}\sqrt{|\det f|}%
\int_{\mathbb{R}^{2n}}e^{-i\sigma(z,f^{-1}z^{\prime})}a(z^{\prime})dz^{\prime
}\\
&  =\left(  \tfrac{1}{2\pi}\right)  ^{n}|\det\Omega|^{-1/2}|\det
f|\int_{\mathbb{R}^{2n}}e^{-i\sigma(z,z^{\prime\prime})}M_{f}a(z^{\prime
\prime})dz^{\prime\prime}%
\end{align*}
hence the second equality (\ref{equ}) because%
\begin{equation}
|\det\Omega|^{-1/2}|\det f|=1 \label{a}%
\end{equation}
in view of the equality (\ref{oj}). To prove that $M_{f}\widetilde{A}_{\omega
}=\widetilde{A^{\prime}}M_{f}$ it suffices to use the relations (\ref{equ})
together with definition (\ref{atilde}) of $\widetilde{A}_{\omega}$:%
\begin{align*}
M_{f}\widetilde{A}_{\omega}  &  =\left(  \tfrac{1}{2\pi}\right)  ^{n}%
|\det\Omega|^{-1/2}\int_{\mathbb{R}^{2n}}F_{\omega}a(z)M_{f}\widetilde{T}%
_{\omega}(z)dz\\
&  =\left(  \tfrac{1}{2\pi}\right)  ^{n}|\det\Omega|^{-1/2}\int_{\mathbb{R}%
^{2n}}F_{\omega}a(z)\widetilde{T}(f^{-1}z)M_{f}dz;
\end{align*}
performing the change of variables $z\longmapsto fz$ we get, using again
(\ref{a}), and noting that $|\det f|^{-1/2}M_{f}a(z)=a(fz)$,
\begin{align*}
M_{f}\widetilde{A}_{\omega}  &  =\left(  \tfrac{1}{2\pi}\right)  ^{n}%
|\det\Omega|^{-1/2}|\det f|\int_{\mathbb{R}^{2n}}F_{\omega}a(fz)\widetilde{T}%
(z)M_{f}dz\\
&  =\left(  \tfrac{1}{2\pi}\right)  ^{n}\int_{\mathbb{R}^{2n}}F_{\omega
}a(fz)\widetilde{T}(z)M_{f}dz\\
&  =\left(  \tfrac{1}{2\pi}\right)  ^{n}|\det f|^{-1/2}\int_{\mathbb{R}^{2n}%
}M_{f}F_{\omega}a(z)\widetilde{T}(z)M_{f}dz\\
&  =\left(  \tfrac{1}{2\pi}\right)  ^{n}|\det f|^{-1/2}\int_{\mathbb{R}^{2n}%
}F_{\sigma}M_{f}a(z)\widetilde{T}(z)M_{f}dz\\
&  =\left(  \tfrac{1}{2\pi}\right)  ^{n}\int_{\mathbb{R}^{2n}}F_{\sigma
}(a\circ f)(z)\widetilde{T}(z)M_{f}dz\\
&  =\widetilde{A^{\prime}}M_{f}.
\end{align*}
(ii) To prove formula (\ref{conjug}) it suffices to note that%
\begin{align*}
M_{f^{\prime}}\widetilde{A}_{\omega}  &  =(M_{f^{\prime}}M_{f}^{-1}%
)M_{f}\widetilde{A}_{\omega}\\
&  =M_{S_{\sigma}}(\widetilde{A^{\prime}}M_{f})\\
&  =(M_{S_{\sigma}}\widetilde{A^{\prime}}M_{S_{\sigma}}^{-1})M_{S_{\sigma}%
}M_{f}\\
&  =(M_{S_{\sigma}}\widetilde{A^{\prime}}M_{S_{\sigma}}^{-1})M_{f^{\prime}}.
\end{align*}
That we have $M_{S_{\sigma}}U(z)=U(S_{\sigma}z)$ is clear since $\det
S_{\sigma}=1$.
\end{proof}

We note that formula (\ref{conjug}) can be interpreted in terms of the
symplectic covariance property of Weyl calculus. To see this, let us equip the
double phase space $\mathbb{R}^{2n}\oplus\mathbb{R}^{2n}$ with the symplectic
structure $\sigma^{\oplus}=\sigma\oplus\sigma$. In view of formula (\ref{azj})
with $\Omega=J$ the Weyl symbols of operators $\widetilde{A^{\prime\prime}}$
and $\widetilde{A^{\prime}}$ are, respectively%
\[
\widetilde{a^{\prime}}(z,\zeta)=a\left[  f(z-\tfrac{1}{2}J\zeta)\right]
\text{ \ , \ }\widetilde{a^{^{\prime\prime}}}(z,\zeta)=a\left[  f^{\prime
}(z-\tfrac{1}{2}J\zeta)\right]
\]
and hence, using the identities $f^{-1}f^{\prime}=S_{\sigma}\in
\operatorname*{Sp}(2n,\sigma)$ and $S_{\sigma}J=J(S_{\sigma}^{T})^{-1}$,
\[
\widetilde{a^{^{\prime\prime}}}(z,\zeta)=a^{\prime}\left[  S_{\sigma}%
(z-\tfrac{1}{2}J(S_{\sigma}^{T})^{-1}\zeta)\right]  =\widetilde{a^{\prime}%
}(S_{\sigma}z,(S_{\sigma}^{T})^{-1}\zeta).
\]
Let now $m_{S_{\sigma}}$ be the automorphism of $\mathbb{R}^{2n}%
\oplus\mathbb{R}^{2n}$ defined by
\[
m_{S_{\sigma}}(z,\zeta)=(S_{\sigma}^{-1}z,S_{\sigma}^{T}\zeta);
\]
formula (\ref{conjug}) can thus be restated as%
\begin{equation}
\widetilde{A^{\prime\prime}}=M_{S_{\sigma}}\widetilde{A^{\prime}}M_{S_{\sigma
}}^{-1}\text{ \ with \ }a^{^{\prime\prime}}=a^{\prime}\circ m_{S_{\sigma}%
}^{-1}. \label{conjugbis}%
\end{equation}
Recall now (see for instance \cite{Birk}, Chapter 7) that each automorphism
$f$ of $\mathbb{R}^{2n}$ induces an element $m_{f}$ of $\operatorname*{Sp}%
(4n,\sigma^{\oplus})$ defined by $m_{f}(z,\zeta)=(f^{-1}z,f^{T}\zeta)$ and
that $m_{f}$ is the projection of the metaplectic operator $\widehat{M}_{f}%
\in\operatorname*{Mp}(\mathbb{R}^{2n}\oplus\mathbb{R}^{2n},\sigma^{\oplus})$
(with $\sigma^{\oplus}=\sigma\oplus\sigma$) defined by (\ref{pushpush}).
Formula (\ref{conjugbis}) thus reflects the symplectic covariance property of
Weyl calculus mentioned in Subsection \ref{subsecone}.

We finally note that if we equip $\mathbb{R}^{2n}\oplus\mathbb{R}^{2n}$ with
the symplectic form $\omega^{\oplus}=\omega\oplus\omega$, the
symplectomorphism $f:(\mathbb{R}^{2n},\sigma)\longrightarrow(\mathbb{R}%
^{2n},\omega)$ induces a natural symplectomorphism%
\[
f\oplus f:(\mathbb{R}^{2n}\oplus\mathbb{R}^{2n},\sigma^{\oplus}%
)\longrightarrow(\mathbb{R}^{2n}\oplus\mathbb{R}^{2n},\omega^{\oplus}).
\]

\section{The Intertwining Property}

In this section we show that the operators $\widetilde{A}_{\omega}$ can be
intertwined with the standard Weyl operator $\widehat{A}$ using an infinite
family of partial isometries $(W_{f,\phi})_{\phi\in\mathcal{S}(\mathbb{R}%
^{n})}$ of $L^{2}(\mathbb{R}^{n})$ (depending on $\Omega$) into $L^{2}%
(\mathbb{R}^{2n})$. Each $W_{f,\phi}$ maps isomorphically $L^{2}%
(\mathbb{R}^{n})$ onto a closed subspace $\mathcal{H}_{\phi}$ of
$L^{2}(\mathbb{R}^{2n})$.

\subsection{The partial isometries $W_{f,\phi}$}

Let $\phi\in\mathcal{S}(\mathbb{R}^{n})$ be such that $||\phi||=1$; $\phi$
will be hereafter called a \emph{window}. In \cite{GOLU1} two of us have
studied the linear mapping $W_{\phi}:\mathcal{S}(\mathbb{R}^{n}%
)\longrightarrow\mathcal{S}(\mathbb{R}^{2n})$ defined by the formula
\begin{equation}
W_{\phi}u=(2\pi)^{n/2}W(u,\phi) \label{wpf}%
\end{equation}
where $W(u,\phi)$ is the cross-Wigner distribution (\ref{wifi}). Notice that
\begin{equation}
W_{\phi}u(z)=\left(  \tfrac{2}{\pi}\right)  ^{n/2}(\widehat{T}_{\text{GR}%
}(z)u|\phi) \label{ufigr}%
\end{equation}
where $\widehat{T}_{\text{GR}}(z)$ is the Grossmann--Royer transform
(\ref{tgr}).

\begin{proposition}
Let $\phi\in\mathcal{S}(\mathbb{R}^{n})$ be a window.

(i) The mapping $W_{\phi}:\mathcal{S}(\mathbb{R}^{n})\longrightarrow
\mathcal{S}(\mathbb{R}^{2n})$ extends into a mapping%
\[
W_{\phi}:\mathcal{S}^{\prime}(\mathbb{R}^{n})\longrightarrow\mathcal{S}%
^{\prime}(\mathbb{R}^{2n})
\]
whose restriction to $L^{2}(\mathbb{R}^{n})$ is an isometry onto a closed
subspace $\mathcal{H}_{\phi}$ of $L^{2}(\mathbb{R}^{2n})$.

(ii) The inverse of $W_{\phi}$ is given by the formula $u=W_{\phi}^{-1}U$ with%
\begin{equation}
u(x)=\left(  \tfrac{2}{\pi}\right)  ^{n/2}\int_{\mathbb{R}^{2n}}%
U(z_{0})\widehat{T}_{\text{GR}}(z_{0})\phi(x)dz_{0} \label{psitgrufi}%
\end{equation}
and the adjoint $W_{\phi}^{\ast}$ of $W_{\phi}$ is given by the formula%
\begin{equation}
W_{\phi}^{\ast}U=\left(  \tfrac{2}{\pi}\right)  ^{n/2}\int_{\mathbb{R}^{2n}%
}U(z_{0})\widehat{T}_{\text{GR}}(z_{0})\phi(x) dz_{0}. \label{psitadj}%
\end{equation}

(iii) The operator $P_{\phi}=W_{\phi}W_{\phi}^{\ast}$ is the orthogonal
projection of $L^{2}(\mathbb{R}^{2n})$ onto the Hilbert space $\mathcal{H}%
_{\phi}$.
\end{proposition}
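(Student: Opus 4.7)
The plan is to derive everything from the Moyal identity (\ref{Moyal}) and the Grossmann--Royer representation (\ref{wigroyer}) of the cross-Wigner transform, together with the fact (visible from formula (\ref{tgr})) that $\widehat{T}_{\mathrm{GR}}(z_{0})$ is unitary and self-adjoint on $L^{2}(\mathbb{R}^{n})$.

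For part (i) I would first compute, for $u,v\in\mathcal{S}(\mathbb{R}^{n})$,
\[
(\!( W_{\phi}u\vert W_{\phi}v)\!)=(2\pi)^{n}(\!( W(u,\phi)\vert W(v,\phi))\!)=(u\vert v)\,\overline{(\phi\vert\phi)}=(u\vert v),
\]
using (\ref{Moyal}) and $\|\phi\|=1$. Hence $W_{\phi}$ is an $L^{2}$-isometry, and in particular its range $\mathcal{H}_{\phi}$ is closed. The extension to $\mathcal{S}^{\prime}(\mathbb{R}^{n})$ is obtained from (\ref{ufigr}): since $z\mapsto\widehat{T}_{\mathrm{GR}}(z)\phi$ is a smooth $\mathcal{S}(\mathbb{R}^{n})$-valued map, the bracket $(\widehat{T}_{\mathrm{GR}}(z)u\vert\phi)=\langle u,\overline{\widehat{T}_{\mathrm{GR}}(z)\phi}\rangle$ makes sense for $u\in\mathcal{S}^{\prime}(\mathbb{R}^{n})$ and defines an element of $\mathcal{S}^{\prime}(\mathbb{R}^{2n})$ by standard continuity arguments.

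For part (ii), because $W_{\phi}$ is an isometry onto $\mathcal{H}_{\phi}$ we have $W_{\phi}^{\ast}W_{\phi}=I_{L^{2}(\mathbb{R}^{n})}$, so $W_{\phi}^{-1}=W_{\phi}^{\ast}$ on $\mathcal{H}_{\phi}$. I would then compute $W_{\phi}^{\ast}$ explicitly by starting from the defining relation $(\!( U\vert W_{\phi}u)\!)=(W_{\phi}^{\ast}U\vert u)$ and inserting (\ref{ufigr}):
\[
(\!( U\vert W_{\phi}u)\!)=\Bigl(\tfrac{2}{\pi}\Bigr)^{n/2}\!\int_{\mathbb{R}^{2n}}U(z_{0})\,\overline{(\widehat{T}_{\mathrm{GR}}(z_{0})u\vert\phi)}\,dz_{0}.
\]
Using the self-adjointness of $\widehat{T}_{\mathrm{GR}}(z_{0})$ one has $\overline{(\widehat{T}_{\mathrm{GR}}(z_{0})u\vert\phi)}=(\widehat{T}_{\mathrm{GR}}(z_{0})\phi\vert u)$, which yields formula (\ref{psitadj}); combined with $W_{\phi}^{-1}=W_{\phi}^{\ast}$ this also gives (\ref{psitgrufi}). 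The only care needed is to justify interchanging the scalar product with the integral, which is routine for $U\in\mathcal{S}(\mathbb{R}^{2n})$ and then extends by density/duality.

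For part (iii), the operator $P_{\phi}=W_{\phi}W_{\phi}^{\ast}$ is bounded on $L^{2}(\mathbb{R}^{2n})$, and the identities
\[
P_{\phi}^{2}=W_{\phi}(W_{\phi}^{\ast}W_{\phi})W_{\phi}^{\ast}=W_{\phi}W_{\phi}^{\ast}=P_{\phi},\qquad P_{\phi}^{\ast}=(W_{\phi}W_{\phi}^{\ast})^{\ast}=P_{\phi}
\]
show $P_{\phi}$ is an orthogonal projection; its range is $W_{\phi}(\mathrm{ran}\,W_{\phi}^{\ast})=W_{\phi}(L^{2}(\mathbb{R}^{n}))=\mathcal{H}_{\phi}$ because $W_{\phi}^{\ast}$ is surjective (being a left inverse of an isometry). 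The main technical subtlety, and the step I would treat most carefully, is the passage to tempered distributions: the isometry identity is purely an $L^{2}$ statement, but to obtain a genuine map $W_{\phi}:\mathcal{S}^{\prime}(\mathbb{R}^{n})\to\mathcal{S}^{\prime}(\mathbb{R}^{2n})$ one must invoke the fact that $(u,v)\mapsto W(u,v)$ extends continuously to $\mathcal{S}^{\prime}(\mathbb{R}^{n})\times\mathcal{S}^{\prime}(\mathbb{R}^{n})\to\mathcal{S}^{\prime}(\mathbb{R}^{2n})$, which was recalled in Section 2 right after (\ref{wigroyer}). Everything else then follows from elementary Hilbert-space manipulations.
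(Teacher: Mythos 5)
Your proposal is correct and follows essentially the same route as the paper: Moyal's identity for the isometry in (i), the Grossmann--Royer representation together with the self-adjointness of $\widehat{T}_{\mathrm{GR}}(z_{0})$ for (ii), and the standard projection algebra for (iii). The only cosmetic difference is that in (ii) you derive $W_{\phi}^{\ast}$ directly from the adjointness relation and then get the inverse from $W_{\phi}^{\ast}W_{\phi}=I$, whereas the paper verifies the inversion formula first by testing $(w\vert v)$ against arbitrary $v\in\mathcal{S}(\mathbb{R}^{n})$ and then reads off the adjoint; the underlying computation, including the step $\overline{(\widehat{T}_{\mathrm{GR}}(z_{0})u\vert\phi)}=(\widehat{T}_{\mathrm{GR}}(z_{0})\phi\vert u)$, is the same.
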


\begin{proof}
(i) In view of Moyal's identity (\ref{Moyal}) the operator $W_{\phi}$ extends
into an isometry of $L^{2}(\mathbb{R}^{n})$ onto a subspace $\mathcal{H}%
_{\phi}$ of $L^{2}(\mathbb{R}^{2n})$:%
\[
(\!(W_{\phi}u|W_{\phi}u^{\prime})\!)=(u|u^{\prime}).
\]
The subspace $\mathcal{H}_{\phi}$ is closed, being homeomorphic to
$L^{2}(\mathbb{R}^{n})$. (ii) The inversion formula (\ref{psitgrufi}) is
verified by a direct calculation: let us set%
\[
w(x)=\left(  \tfrac{2}{\pi}\right)  ^{n/2}\int_{\mathbb{R}^{2n}}%
U(z_{0})\widehat{T}_{\text{GR}}(z_{0})\phi(x)dz_{0}%
\]
and choose an arbitrary function $v\in\mathcal{S}(\mathbb{R}^{n})$. We have%
\begin{align*}
(w|v)  &  =\left(  \tfrac{2}{\pi}\right)  ^{n/2}\int_{\mathbb{R}^{2n}}%
U(z_{0})(\widehat{T}_{\text{GR}}(z_{0})\phi|v)dz_{0}\\
&  =(2\pi)^{n/2}\int_{\mathbb{R}^{2n}}U(z_{0})\overline{W(v,\phi)}%
(z_{0})dz_{0}\\
&  =\int_{\mathbb{R}^{2n}}W_{\phi}u(z_{0})\overline{W_{\phi}v(z_{0})}dz_{0}\\
&  =(u|v)
\end{align*}
hence $w=u$ which proves (\ref{psitgrufi}); formula (\ref{psitadj}) for the
adjoint follows since $W_{\phi}^{\ast}W_{\phi}$ is the identity on
$L^{2}(\mathbb{R}^{n})$. (iii) We have $P_{\phi}=P_{\phi}^{\ast}$ and
$P_{\phi}P_{\phi}^{\ast}=P_{\phi}$ hence $P_{\phi}$ is an orthogonal
projection. Since $W_{\phi}^{\ast}W_{\phi}$ is the identity on $L^{2}%
(\mathbb{R}^{n})$ the range of $W_{\phi}^{\ast}$ is $L^{2}(\mathbb{R}^{n})$
and that of $P_{\phi}$ is therefore precisely $\mathcal{H}_{\phi}$.
\end{proof}

In \cite{GOLU1} it was shown that the partial isometries $W_{\phi}$ can be
used to intertwine the operators $\widetilde{A}=\widetilde{A}_{\sigma}$ with
symbol $\widetilde{a}$ with the usual Weyl operators with symbol $a$; we
reproduce the proof for convenience:

\begin{proposition}
Let $\widetilde{T}(z_{0})=\widetilde{T}_{\sigma}(z_{0})$. We have the
following intertwining properties:
\begin{equation}
W_{\phi}\widehat{T}(z_{0})=\widetilde{T}(z_{0})W_{\phi}\text{\ and \ }W_{\phi
}^{\ast}\widetilde{T}(z_{0})=\widehat{T}(z_{0})W_{\phi}^{\ast} \label{inter1}%
\end{equation}%
\begin{equation}
\widetilde{A}W_{\phi}=W_{\phi}\widehat{A}\text{ \ and \ }W_{\phi}^{\ast
}\widetilde{A}=\widehat{A}W_{\phi}^{\ast}\text{.} \label{fund}%
\end{equation}

\end{proposition}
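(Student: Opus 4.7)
The plan is to derive both relations (\ref{inter1}) and (\ref{fund}) from a single ingredient, namely formula (\ref{wt2}), which already encodes how the Heisenberg--Weyl operator $\widehat{T}(z_{0})$ acts under the cross-Wigner transform in its first argument. The key structural observation is that when $\Omega=J$ one has $F_{\omega}=F_{\sigma}$ and $|\det\Omega|^{-1/2}=1$, so definition (\ref{atilde}) reduces to
\[
\widetilde{A}=\left(\tfrac{1}{2\pi}\right)^{n}\int_{\mathbb{R}^{2n}}F_{\sigma}a(z)\,\widetilde{T}(z)\,dz,
\]
which is structurally identical to (\ref{ahat}) with $\widehat{T}(z)$ replaced by $\widetilde{T}(z)$. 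Thus an intertwining at the Heisenberg--Weyl level will automatically lift, by linearity and continuity, to an intertwining for every Weyl operator.

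First I would settle (\ref{inter1}). Setting $v=\phi$ in (\ref{wt2}) and multiplying both sides by $(2\pi)^{n/2}$ yields, for every $u\in\mathcal{S}(\mathbb{R}^{n})$,
\[
W_{\phi}(\widehat{T}(z_{0})u)(z)=e^{-i\sigma(z,z_{0})}(W_{\phi}u)(z-\tfrac{1}{2}z_{0}),
\]
and the right-hand side is exactly $\widetilde{T}(z_{0})(W_{\phi}u)(z)$ by (\ref{hwbis}) with $\omega=\sigma$. This gives the first equality in (\ref{inter1}), and it extends to $u\in\mathcal{S}^{\prime}(\mathbb{R}^{n})$ by continuity of $W_{\phi}$. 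The second equality follows by taking adjoints: a short change of variables $z\mapsto z+\tfrac{1}{2}z_{0}$ in the pairing $(\!(\widetilde{T}(z_{0})U\vert V)\!)$ shows that $\widetilde{T}(z_{0})^{\ast}=\widetilde{T}(-z_{0})$, and similarly $\widehat{T}(z_{0})^{\ast}=\widehat{T}(-z_{0})$; after adjoining the first identity and replacing $z_{0}$ by $-z_{0}$ one obtains $W_{\phi}^{\ast}\widetilde{T}(z_{0})=\widehat{T}(z_{0})W_{\phi}^{\ast}$.

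To obtain (\ref{fund}) I would now feed (\ref{inter1}) into the integral representation above. Writing
\[
\widetilde{A}\,W_{\phi}u=\left(\tfrac{1}{2\pi}\right)^{n}\int_{\mathbb{R}^{2n}}F_{\sigma}a(z)\,\widetilde{T}(z)\,W_{\phi}u\,dz
\]
and substituting $\widetilde{T}(z)W_{\phi}=W_{\phi}\widehat{T}(z)$ under the integral sign, the continuity of $W_{\phi}:\mathcal{S}^{\prime}(\mathbb{R}^{n})\longrightarrow\mathcal{S}^{\prime}(\mathbb{R}^{2n})$ allows one to pull $W_{\phi}$ outside, producing $W_{\phi}\widehat{A}u$. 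The companion identity $W_{\phi}^{\ast}\widetilde{A}=\widehat{A}W_{\phi}^{\ast}$ follows by adjunction, using the corollary after Proposition \ref{propone} which identifies $\widetilde{A}^{\ast}$ and $\widehat{A}^{\ast}$ as the operators with symbol $\overline{a}$.

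The only genuine obstacle is the rigorous justification of interchanging $W_{\phi}$ with the distributional integral $\int F_{\sigma}a(z)\,\widetilde{T}(z)(\cdot)\,dz$ when $a\in\mathcal{S}^{\prime}(\mathbb{R}^{2n})$. I would handle this by interpreting both sides weakly: pairing $\widetilde{A}W_{\phi}u$ against a test function $V\in\mathcal{S}(\mathbb{R}^{2n})$ rewrites the integral as a genuine duality pairing $\langle F_{\sigma}a,\,z\mapsto(\!(\widetilde{T}(z)W_{\phi}u\vert V)\!)\rangle$, into which (\ref{inter1}) can be inserted pointwise and Fubini applied. Alternatively one first proves the identity for $a\in\mathcal{S}(\mathbb{R}^{2n})$ with absolutely convergent integrals, and then extends by continuity of the Weyl correspondence on $\mathcal{S}^{\prime}(\mathbb{R}^{2n})$.
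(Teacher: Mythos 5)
Your proof is correct and takes essentially the same approach as the paper: derive (\ref{inter1}) from the shift property (\ref{wt2}), then feed it under the integral defining the Weyl/phase-space operators to obtain (\ref{fund}), and handle the companion identities by passing to adjoints. The paper starts the manipulation from $W_{\phi}\widehat{A}u$ whereas you start from $\widetilde{A}W_{\phi}u$, and you spell out the justification for interchanging $W_{\phi}$ with the distributional integral, which the paper leaves implicit; these are cosmetic differences, not a genuinely different route.
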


\begin{proof}
Formula (\ref{inter1}) immediately follows from the shift property
(\ref{wt2}). On the other hand we have
\[
W_{\phi}\widehat{A}u=\left(  \tfrac{1}{2\pi}\right)  ^{n}\int_{\mathbb{R}%
^{2n}}F_{\sigma}a(z_{0})W_{\phi}[\widehat{T}(z_{0})u]dz_{0}%
\]
and hence, in view of (\ref{inter1}),%
\[
W_{\phi}\widehat{A}u=\left(  \tfrac{1}{2\pi}\right)  ^{n}\int_{\mathbb{R}%
^{2n}}F_{\sigma}a(z_{0})[\widetilde{T}(z_{0})W_{\phi}u]dz_{0}%
\]
which is the first equality (\ref{fund}). To prove the second equality
(\ref{fund}) it suffices to apply the first to $W_{\phi}^{\ast}\widetilde{A}%
=(\widetilde{A}^{\ast}W_{\phi})^{\ast}$.
\end{proof}

Let us generalize this result to the case of an arbitrary operator
$\widetilde{A}_{\omega}$.

\begin{proposition}
\label{aprime}Let $\omega$ be a symplectic form (\ref{omf}) on $\mathbb{R}%
^{2n}$ and $f$ a linear automorphism such that $f^{\ast}\omega=\sigma$. The
mappings $W_{f,\phi}:\mathcal{S}(\mathbb{R}^{n})\longrightarrow S(\mathbb{R}%
^{2n})$ defined by the formula:%
\begin{equation}
W_{f,\phi}=M_{f}^{-1}W_{\phi} \label{ws}%
\end{equation}
are partial isometries $L^{2}(\mathbb{R}^{n})\longrightarrow L^{2}%
(\mathbb{R}^{2n})$, in fact isometries on a closed subspace $\mathcal{H}%
_{f,\phi}$ of $L^{2}(\mathbb{R}^{2n})$, and we have%
\begin{equation}
\widetilde{A}_{\omega}W_{f,\phi}=W_{f,\phi}\widehat{A^{\prime}}\text{ \ and
\ }W_{f,\phi}^{\ast}\widetilde{A}_{\omega}=\widehat{A^{\prime}}W_{f,\phi
}^{\ast} \label{sfund}%
\end{equation}
where $\widehat{A^{\prime}}\overset{\text{Weyl}}{\longleftrightarrow}a\circ f$.
\end{proposition}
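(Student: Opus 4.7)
The plan is to reduce everything to two previously proved facts: the unitary intertwining of Proposition~\ref{proptwo}(i), and the intertwining relations already established for the standard ($\omega=\sigma$) case. Since $W_{f,\phi}$ is literally $M_f^{-1}$ composed with $W_\phi$, the claimed properties should transfer through this composition with essentially no new work.

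For the partial isometry statement, I would note that $M_f$ (and hence $M_f^{-1}$) is unitary on $L^2(\mathbb{R}^{2n})$, a fact recorded just before Proposition~\ref{proptwo}, and that the preceding proposition shows $W_\phi$ is an isometry from $L^2(\mathbb{R}^n)$ onto the closed subspace $\mathcal{H}_\phi$. Therefore $W_{f,\phi} = M_f^{-1}W_\phi$ is an isometry of $L^2(\mathbb{R}^n)$ onto $\mathcal{H}_{f,\phi} := M_f^{-1}(\mathcal{H}_\phi)$, which is closed since $M_f^{-1}$ is a bijective isometry. The extension from $\mathcal{S}(\mathbb{R}^n)$ to $\mathcal{S}'(\mathbb{R}^n)$ follows immediately from the corresponding extension for $W_\phi$ together with the continuity of $M_f^{-1}$ on $\mathcal{S}'(\mathbb{R}^{2n})$.

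For the intertwining identities, the key step is to rewrite Proposition~\ref{proptwo}(i) as $\widetilde{A}_\omega = M_f^{-1}\widetilde{A'}M_f$, where $\widetilde{A'}$ is now the \emph{standard} ($\omega=\sigma$) operator attached to the symbol $a'(z)=a(fz)$. Plugging this into $\widetilde{A}_\omega W_{f,\phi}$, the two factors of $M_f$ cancel, and I apply the previously established identity $\widetilde{A'}W_\phi = W_\phi\widehat{A'}$ from the preceding proposition, obtaining
\begin{equation*}
\widetilde{A}_\omega W_{f,\phi} = M_f^{-1}\widetilde{A'}M_f\,M_f^{-1}W_\phi = M_f^{-1}\widetilde{A'}W_\phi = M_f^{-1}W_\phi\widehat{A'} = W_{f,\phi}\widehat{A'},
\end{equation*}
which is the first identity in (\ref{sfund}).

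For the second identity in (\ref{sfund}) I would apply the first identity to the operator with symbol $\overline{a}$; by the preceding corollary this replaces $\widetilde{A}_\omega$ by $\widetilde{A}_\omega^{\ast}$ on the left and, since $\overline{a}\circ f = \overline{a\circ f}$, it replaces $\widehat{A'}$ by $(\widehat{A'})^{\ast}$ on the right. Taking Hilbert-space adjoints of the resulting identity then yields $W_{f,\phi}^{\ast}\widetilde{A}_\omega = \widehat{A'}W_{f,\phi}^{\ast}$. I do not anticipate a genuine obstacle here: the argument is purely a chaining of prior results, and the only mild care needed is to verify that the symbol correspondence under complex conjugation commutes with composition by $f$, which is immediate.
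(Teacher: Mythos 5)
Your proposal is correct and follows essentially the same route as the paper's proof: the intertwining identity is obtained by substituting $\widetilde{A}_\omega = M_f^{-1}\widetilde{A'}M_f$ (from Proposition~\ref{proptwo}(i), eq.~(\ref{equa})) into $\widetilde{A}_\omega W_{f,\phi}$ and using $\widetilde{A'}W_\phi = W_\phi\widehat{A'}$ (eq.~(\ref{fund})), and the partial-isometry claim follows immediately from the unitarity of $M_f$. Your adjoint argument for the second identity in (\ref{sfund}) spells out exactly what the paper compresses into "proven in a similar way," and matches the device the paper itself uses in the $\omega=\sigma$ case.
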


\begin{proof}
We have, using the first formula (\ref{fund}) and (\ref{equa}),
\begin{align*}
\widetilde{A}_{\omega}W_{f,\phi}  &  =M_{f}^{-1}\widetilde{A^{\prime}}%
M_{f}(M_{f}^{-1}W_{\phi})\\
&  =M_{f}^{-1}(\widetilde{A^{\prime}}W_{\phi})\\
&  =M_{f}^{-1}W_{\phi}\widehat{A^{\prime}}\\
&  =W_{f,\phi}\widehat{A^{\prime}};
\end{align*}
the equality $W_{f,\phi}^{\ast}\widetilde{A}_{\omega}=\widehat{A^{\prime}%
}W_{f,\phi}^{\ast}$ is proven in a similar way. That $W_{f,\phi}$ is a partial
isometry is obvious since $W_{\phi}$ is a a partial isometry and $M_{f}$ is unitary.
\end{proof}

Let us make explicit the change of the mapping $f$:

\begin{proposition}
Let $f$ and $f^{\prime}$ be linear automorphisms of $\mathbb{R}^{2n}$ such
that $f^{\ast}\omega=f^{\prime\ast}\omega=\sigma$. We have%
\begin{equation}
W_{f^{\prime},\phi}u=W_{f,\widehat{S}_{\sigma}\phi}(\widehat{S}_{\sigma}u)
\label{usfi}%
\end{equation}
where $\widehat{S}_{\sigma}\in\operatorname*{Mp}(2n,\sigma)$ is such that
$\pi(\widehat{S}_{\sigma})=f^{-1}f^{\prime}$.
\end{proposition}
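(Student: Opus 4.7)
The plan is to reduce the identity to the symplectic covariance property (\ref{sycov}) of the cross-Wigner transform, via the factorisation $f' = f(f^{-1}f')$.

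First I would verify that $S_{\sigma} := f^{-1}f'$ does belong to $\operatorname{Sp}(2n,\sigma)$: from $f^{*}\omega = \sigma$ one gets $(f^{-1})^{*}\sigma = \omega$, and therefore
\[
(S_{\sigma})^{*}\sigma = (f^{-1}f')^{*}\sigma = f'^{*}\!\big((f^{-1})^{*}\sigma\big) = f'^{*}\omega = \sigma.
\]
So $S_{\sigma}\in\operatorname{Sp}(2n,\sigma)$ and we may pick $\widehat{S}_{\sigma}\in\operatorname{Mp}(2n,\sigma)$ with $\pi(\widehat{S}_{\sigma})=S_{\sigma}$.

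Next, I would unwind $W_{f',\phi}$ using the definition (\ref{ws}). Since $M_{f}M_{g}=M_{gf}$ (routine from $M_{f}U(z)=\sqrt{|\det f|}\,U(fz)$), the factorisation $f'=fS_{\sigma}$ gives $M_{f'} = M_{S_{\sigma}}M_{f}$, hence $M_{f'}^{-1}=M_{f}^{-1}M_{S_{\sigma}}^{-1}$. Therefore
\[
W_{f',\phi}u \;=\; M_{f'}^{-1}W_{\phi}u \;=\; M_{f}^{-1}\bigl(M_{S_{\sigma}}^{-1}W_{\phi}u\bigr).
\]
It thus suffices to establish the single identity
\[
M_{S_{\sigma}}^{-1}W_{\phi}u \;=\; W_{\widehat{S}_{\sigma}\phi}\bigl(\widehat{S}_{\sigma}u\bigr),
\]
because applying $M_{f}^{-1}$ to both sides then yields the claim in view of (\ref{ws}).

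To prove this intermediate identity I would simply substitute the definition $W_{\phi}u=(2\pi)^{n/2}W(u,\phi)$ and use $\det S_{\sigma}=1$, which makes $M_{S_{\sigma}}^{-1}$ act on phase-space functions as pullback by $S_{\sigma}^{-1}$. Then the symplectic-covariance formula (\ref{sycov}) applied with $S=S_{\sigma}$ gives
\[
W(u,\phi)(S_{\sigma}^{-1}z) \;=\; W(\widehat{S}_{\sigma}u,\widehat{S}_{\sigma}\phi)(z),
\]
which is exactly what is needed; multiplying by $(2\pi)^{n/2}$ re-packages the right-hand side as $W_{\widehat{S}_{\sigma}\phi}(\widehat{S}_{\sigma}u)(z)$. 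I do not expect any genuine obstacle here — the only things to be careful about are the order of composition in $M_{f}M_{g}=M_{gf}$ and the correct ambiguity of sign in $\widehat{S}_{\sigma}$ (which does not affect the identity since the two choices $\pm\widehat{S}_{\sigma}$ give the same pair $(\widehat{S}_{\sigma}u,\widehat{S}_{\sigma}\phi)$ up to a global sign that cancels in the sesquilinear $W$).
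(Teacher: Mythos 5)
Your proof is correct and follows essentially the same route as the paper's: factorize $M_{f'}^{-1}=M_f^{-1}M_{S_\sigma}^{-1}$ using $M_fM_g=M_{gf}$, then apply the symplectic covariance (\ref{sycov}) of the cross-Wigner transform together with $\det S_\sigma=1$ to identify $M_{S_\sigma}^{-1}W_\phi u$ with $W_{\widehat{S}_\sigma\phi}(\widehat{S}_\sigma u)$. The only additions over the paper's argument are your explicit check that $S_\sigma\in\operatorname{Sp}(2n,\sigma)$ and the remark on the $\pm$ ambiguity of $\widehat{S}_\sigma$, both of which are correct and harmless.
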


\begin{proof}
The relation $f^{\ast}\omega=f^{\prime\ast}\omega=\sigma$ implies that
$S_{\sigma}=f^{-1}f^{\prime}\in\operatorname*{Sp}(2n,\sigma)$. We have
$M_{f^{\prime}}=M_{fS_{\sigma}}=M_{S_{\sigma}}M_{f}$ and hence%
\[
W_{f^{\prime},\phi}=M_{f^{\prime}}^{-1}W_{\phi}=M_{f}^{-1}M_{S_{\sigma}}%
^{-1}W_{\phi}.
\]
Now, taking into account definition (\ref{wpf}) of $W_{\phi}$ in terms of the
cross-Wigner transform and the fact that $\det S_{\sigma}=1$ we have, using
the symplectic covariance property (\ref{sycov}),
\begin{align*}
M_{S_{\sigma}}^{-1}W_{\phi}u(z)  &  =(2\pi)^{n/2}W(u,\phi)(S_{\sigma}^{-1}z)\\
&  =(2\pi)^{n/2}W(\widehat{S}_{\sigma}u,\widehat{S}_{\sigma}\phi)(z)\\
&  =W_{\widehat{S}_{\sigma}\phi}(\widehat{S}_{\sigma}u)(z)
\end{align*}
hence formula (\ref{usfi}).
\end{proof}

We remark that the union of the ranges of the partial isometries $W_{\phi}$
viewed as mappings defined on $\mathcal{S}^{\prime}(\mathbb{R}^{n})$ is in a
sense a rather small subset of $\mathcal{S}^{\prime}(\mathbb{R}^{2n})$ even
when $\phi$ runs over all of $\mathcal{S}^{\prime}(\mathbb{R}^{n})$; this is a
consequence of Hardy's theorem on the concentration of a function and its
Fourier transform (de Gosson and Luef \cite{GOLULETT,GOLUAHP}), and is related
to a topological formulation of the uncertainty principle (de Gosson
\cite{FP}). we will discuss these facts somewhat more in detail at the end of
the article.

\subsection{Action of $W_{f,\phi}$ on orthonormal bases}

Let us prove the following important result that shows that orthonormal bases
of $L^{2}(\mathbb{R}^{n})$ can be used to generate orthonormal bases of
$L^{2}(\mathbb{R}^{2n})$ using the mappings $W_{f,\phi}$:

\begin{proposition}
\label{propob}Let $(\phi_{j})_{j}$ be a complete family of vectors in
$L^{2}(\mathbb{R}^{n})$.

(i) The family $(\Phi_{j,k})_{j,k}$ with $\Phi_{j,k}=W_{f,\phi_{j}}\phi_{k}$
is complete in $L^{2}(\mathbb{R}^{2n})$.

(ii) If $(\phi_{j})_{j}$ is an orthonormal basis of $L^{2}(\mathbb{R}^{n})$
then $(\Phi_{j,k})_{j,k}$ is an orthonormal basis of $L^{2}(\mathbb{R}^{2n})$.
\end{proposition}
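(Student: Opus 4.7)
Since $M_f$ is a unitary automorphism of $L^{2}(\mathbb{R}^{2n})$, the family $(\Phi_{j,k})_{j,k}$ is complete (resp.\ orthonormal) in $L^{2}(\mathbb{R}^{2n})$ if and only if $(M_{f}\Phi_{j,k})_{j,k}=(\Psi_{j,k})_{j,k}$ has the same property, where $\Psi_{j,k}:=W_{\phi_{j}}\phi_{k}=(2\pi)^{n/2}W(\phi_{k},\phi_{j})$. So the plan is to establish both statements for the $\Psi_{j,k}$, that is, in the special case when $f$ is the identity and $\omega=\sigma$.

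\textbf{Part (ii).} Assuming $(\phi_{j})$ is orthonormal, I would apply Moyal's identity (\ref{Moyal}) directly to obtain
\begin{equation*}
(\!(\Psi_{j,k}\vert\Psi_{j',k'})\!)=(2\pi)^{n}(\!(W(\phi_{k},\phi_{j})\vert W(\phi_{k'},\phi_{j'}))\!)=(\phi_{k}\vert\phi_{k'})\overline{(\phi_{j}\vert\phi_{j'})}=\delta_{jj'}\delta_{kk'}.
\end{equation*}
Combined with the completeness proved in (i), this yields the desired orthonormal basis.

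\textbf{Part (i).} Suppose $U\in L^{2}(\mathbb{R}^{2n})$ satisfies $(\!(U\vert\Psi_{j,k})\!)=0$ for every $j,k$. Using the elementary symmetry $\overline{W(u,v)}=W(v,u)$ together with the reproducing formula (\ref{ww}) for Weyl operators, this condition rewrites as
\begin{equation*}
(\widehat{B}\phi_{j}\vert\phi_{k})=\langle U,W(\phi_{j},\phi_{k})\rangle=0\quad\text{for all }j,k,
\end{equation*}
where $\widehat{B}$ denotes the Weyl operator with symbol $U$. Since $U\in L^{2}(\mathbb{R}^{2n})$, $\widehat{B}$ is Hilbert--Schmidt on $L^{2}(\mathbb{R}^{n})$, hence bounded. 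A standard density argument based on the completeness of $(\phi_{j})$ then yields $\widehat{B}=0$, and the bijectivity of the Weyl correspondence (recalled in Subsection \ref{subsecone}) forces $U=0$. This proves that the span of $(\Psi_{j,k})$ is dense in $L^{2}(\mathbb{R}^{2n})$.

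\textbf{Main obstacle.} The argument is essentially an unwinding of Moyal's identity and the Weyl kernel theorem, and no step is conceptually hard. The only mildly delicate point is invoking the fact that an $L^{2}$ Weyl symbol defines a Hilbert--Schmidt (hence bounded) operator on $L^{2}(\mathbb{R}^{n})$, which is what makes it possible to pass from the vanishing of the matrix coefficients $(\widehat{B}\phi_{j}\vert\phi_{k})$ on a merely complete (not necessarily orthonormal) family $(\phi_{j})$ to the vanishing of $\widehat{B}$ itself.
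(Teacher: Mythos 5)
Your proof is correct and follows essentially the same route as the paper: reduce to the standard case $\omega=\sigma$ via the unitary $M_f$, then translate orthogonality of $U$ against all $\Psi_{j,k}$ into the vanishing of the matrix elements of the Weyl operator $\widehat{B}$ with symbol $U$, and conclude by injectivity of the Weyl correspondence. There are, however, two small but genuine improvements in your version. For part (ii), the paper merely asserts that it follows from (i) because $W_{f,\phi}$ is an isometry, but this alone only gives orthonormality within a fixed $j$; the cross-$j$ orthogonality $(\!(\Psi_{j,k}\vert\Psi_{j',k'})\!)=\delta_{jj'}\delta_{kk'}$ really needs Moyal's identity, which you invoke explicitly. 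For part (i), the paper passes through the statement $W_\phi^*U=0$ for all $\phi\in L^2(\mathbb{R}^n)$, invoking ``anti-linearity'' of $\phi\mapsto W_\phi^*U$ (in fact linearity), and implicitly relying on boundedness of $\phi\mapsto W_\phi^*U$ to extend from the span of $(\phi_j)$ to all of $L^2$; your direct observation that $\widehat{B}$ is Hilbert--Schmidt (hence bounded) because $U\in L^2(\mathbb{R}^{2n})$ makes the density step entirely transparent and is what is really needed when $(\phi_j)$ is complete but not orthonormal.
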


\begin{proof}
We first note that (ii) follows from (i) since $W_{f,\phi}$ is an isometry of
$L^{2}(\mathbb{R}^{n})$ onto its range $\mathcal{H}_{f,\phi}$ in
$L^{2}(\mathbb{R}^{2n})$. Let us show that if $U\in L^{2}(\mathbb{R}^{2n})$ is
orthogonal to the family $(\Phi_{j,k})_{j,k}$ (and hence to all the spaces
$\mathcal{H}_{f,\phi_{j}}$) then $U=0$. Since by definition $W_{f,\phi}%
=M_{f}^{-1}W_{\phi}$ and the image of a complete system of vectors by
$M_{f}^{-1}$ is also complete, it is sufficient to assume that $W_{f,\phi
}=W_{\phi}$.\ Suppose now that we have $(\!(U \vert\Phi_{j,k} )\!)=0$ for all
indices $j,k$. Since
\[
(\!(U \vert\Phi_{j,k} )\!)=(\!(U \vert W_{\phi_{j}}\phi_{k} )\!) =
(\!(W_{\phi_{j}}^{\ast}U \vert\phi_{k} )\!)
\]
it follows that $W_{\phi_{j}}^{\ast}U=0$ for all $j$ since $(\phi_{j})_{j}$ is
a basis; using the anti-linearity of $W_{\phi}$ in $\phi$ we have in fact
$W_{\phi}^{\ast}U=0$ for all $\phi\in L^{2}(\mathbb{R}^{n})$. Let us show that
this property implies that we must have $U=0$. Recall that the adjoint of the
wavepacket transform $W_{\phi}^{\ast}$ is given by
\[
W_{\phi}^{\ast}U=\left(  \tfrac{2}{\pi}\right)  ^{n/2}\int_{\mathbb{R}^{2n}%
}U(z_{0})\widehat{T}_{\text{GR}}(z_{0})\phi dz_{0}%
\]
where $\widehat{T}_{\text{GR}}(z_{0})$ is the Grossmann--Royer operator (see
formula (\ref{psitadj}) above). Let now $u$ be an arbitrary element of
$\mathcal{S}(\mathbb{R}^{n})$; we have, using definition (\ref{wigroyer}) of
the cross-Wigner transform,%
\begin{align*}
(W_{\phi}^{\ast}U|u)  &  =\left(  \tfrac{2}{\pi}\right)  ^{n/2}\int%
_{\mathbb{R}^{2n}}U(z)(\widehat{T}_{\text{GR}}(z)\phi|u)dz\\
&  =\left(  2\pi\right)  ^{n/2}\int_{\mathbb{R}^{2n}}U(z)W(\phi,u)(z)dz.
\end{align*}
Let us now view $(2\pi)^{n/2}U\in L^{2}(\mathbb{R}^{2n})$ as the Weyl symbol
of an operator $\widehat{A}_{U}$. In view of formula (\ref{ww}) we have%
\[
\left(  2\pi\right)  ^{n/2}\int_{\mathbb{R}^{2n}}U(z)W(\phi
,u)(z)dz=(\widehat{A}_{U}\phi|u)
\]
and the condition $W_{\phi}^{\ast}U=0$ for all $\phi\in\mathcal{S}%
(\mathbb{R}^{n})$ is thus equivalent to the condition $(\widehat{A}_{U}%
\phi|u)=0$ for all $\phi,u\in\mathcal{S}(\mathbb{R}^{n})$. It follows that
$\widehat{A}_{U}\phi=0$ for all $\phi$ and hence $\widehat{A}_{U}=0$. Since
the Weyl correspondence is one-to-one we must have $U=0$ as claimed.
\end{proof}

We remark that the argument in the proof above in fact allows to show that,
more generally, given two orthonormal bases $(\phi_{j})_{j}$ and $(\psi
_{j})_{j}$ of $L^{2}(\mathbb{R}^{n})$ the vectors $\Phi_{j,k}=W_{f,\phi_{j}%
}\psi_{k}$ form an orthonormal basis of $L^{2}(\mathbb{R}^{2n})$.

\section{Spectral Properties of the Operators $\protect\widetilde{A}_{\omega}%
$}

Particularly useful symbol classes for the study of the spectral properties
are the \textquotedblleft global\textquotedblright\ symbol classes
$H\Gamma_{\rho}^{m_{1},m_{0}}(\mathbb{R}^{2n})$ introduced in Shubin
\cite{Shubin}; also see Buzano et al. \cite{buniro10}.

\subsection{The Shubin symbol classes $H\Gamma_{\rho}^{m_{1},m_{0}}$}

Let $m_{0},m_{1}\in\mathbb{R}$ and $0<\rho\leq1$. Introducing the multi-index
notation $\alpha=(\alpha_{1},...,\alpha_{2n})\in\mathbb{N}^{2n}$,
$|\alpha|=\alpha_{1}+\cdot\cdot\cdot+\alpha_{2n}$, $\ $and $\partial
_{z}^{\alpha}=\partial_{x_{1}}^{\alpha_{1}}\cdot\cdot\cdot\partial_{x_{n}%
}^{\alpha_{n}}\partial_{y_{1}}^{\alpha_{n+1}}\cdot\cdot\cdot\partial_{y_{n}%
}^{\alpha_{2n}}$, we have by definition $a\in H\Gamma_{\rho}^{m_{1},m_{0}%
}(\mathbb{R}^{2n})$ if:

\begin{itemize}
\item \textit{We have} $a\in C^{\infty}(\mathbb{R}^{2n})$;

\item \textit{There exist constants} $R,C_{0},C_{1}\geq0$ \textit{and, for
every} $\alpha\in\mathbb{N}^{2n}$, $|\alpha|\neq0$, \textit{a constant}
$C_{\alpha}\geq0$ \textit{such that for} $|z|\geq R$ \textit{the following
estimates hold:}
\begin{equation}
C_{0}|z|^{m_{0}}\leq|a(z)|\leq C_{1}|z|^{m_{1}}\text{ \ , \ }|\partial
_{z}^{\alpha}a(z)|\leq C_{\alpha}|a(z)||z|^{-\rho|\alpha|}. \label{shu}%
\end{equation}

\end{itemize}

The first condition (\ref{shu}) is an ellipticity condition; observe that
$H\Gamma_{\rho}^{m_{1},m_{0}}(\mathbb{R}^{2n})$ is not a vector space.

A simple but typical example is the following: the function $a$ defined by
$a(z)=\frac{1}{2}|z|^{2}$ is in $H\Gamma_{1}^{2,2}(\mathbb{R}^{2n})$, the same
applies, more generally to $a(z)=\frac{1}{2}Mz\cdot z$ when $M$ is a real
positive definite matrix.

The interest of these symbol classes comes from the following result (Shubin
\cite{Shubin}, Chapter 4):

\begin{proposition}
\label{propeigen}Let $a\in H\Gamma_{\rho}^{m_{1},m_{0}}(\mathbb{R}^{2n})$ be
real, and $m_{0}>0$. Then the formally self-adjoint operator $\widehat{A}$
with Weyl symbol $a$ has the following properties: (i) $\widehat{A}$ is
essentially self-adjoint and has discrete spectrum in $L^{2}(\mathbb{R}^{n})$;
(ii) There exists an orthonormal basis of eigenfunctions $\phi_{j}%
\in\mathcal{S}(\mathbb{R}^{n})$ ($j=1,2,...$) with eigenvalues $\lambda_{j}%
\in\mathbb{R}$ such that $\lim_{j\rightarrow\infty}|\lambda_{j}|=\infty$.
\end{proposition}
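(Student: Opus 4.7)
The plan is to reduce everything to the standard global pseudo-differential machinery for Shubin symbols, i.e.\ to construct a parametrix for $\widehat{A}$ in $H\Gamma_{\rho}^{-m_{0},-m_{1}}$ and then invoke the spectral theorem for unbounded self-adjoint operators with compact resolvent. The ingredients live entirely inside $\mathcal{S}(\mathbb{R}^{n})$ and its dual, so all manipulations are legitimate once the calculus is set up.

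First I would build a parametrix. The lower bound $|a(z)|\geq C_{0}|z|^{m_{0}}$ for $|z|\geq R$ together with the Leibniz-type estimates in (\ref{shu}) makes $a$ globally hypoelliptic in Shubin's sense. One takes $b_{0}(z)=\chi(z)/a(z)$ with $\chi$ a cutoff equal to $1$ for $|z|\geq R$, verifies that $b_{0}\in H\Gamma_{\rho}^{-m_{0},-m_{1}}$, and then iteratively corrects $b_{0}$ using the asymptotic expansion of the twisted product $\#$ so as to produce $b\sim\sum b_{j}$ with $b_{j}\in \Gamma_{\rho}^{-m_{0}-j\rho}$ and
\[
a\#b-1\in\Gamma^{-\infty}(\mathbb{R}^{2n}),\qquad b\#a-1\in\Gamma^{-\infty}(\mathbb{R}^{2n}).
\]
At the operator level this gives $\widehat{A}\widehat{B}=I+\widehat{R}_{1}$, $\widehat{B}\widehat{A}=I+\widehat{R}_{2}$ with $\widehat{R}_{1},\widehat{R}_{2}$ continuously mapping $\mathcal{S}^{\prime}(\mathbb{R}^{n})\to\mathcal{S}(\mathbb{R}^{n})$.

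Second, I would use the parametrix to obtain the abstract spectral setup on $L^{2}(\mathbb{R}^{n})$. Since $a$ is real, Proposition 1(i) says $\widehat{A}$ is symmetric on $\mathcal{S}(\mathbb{R}^{n})$. Because $m_{0}>0$, the parametrix $\widehat{B}$ belongs to a negative-order Shubin class, so it maps $L^{2}$ into a Shubin--Sobolev space $Q^{m_{0}}$ which embeds \emph{compactly} into $L^{2}$ (the global Rellich-type theorem coming from the simultaneous decay of $b$ in $x$ and $\xi$). Together with the smoothing property of $\widehat{R}_{1},\widehat{R}_{2}$, this yields that the ranges of $\widehat{A}\pm iI$ are dense in $L^{2}(\mathbb{R}^{n})$: if $v\perp(\widehat{A}+iI)\mathcal{S}$ then $(\widehat{A}-iI)v=0$ in $\mathcal{S}^{\prime}$, whence $v=\widehat{B}(\widehat{A}-iI)v-\widehat{R}_{2}v+iv\widehat{B}\cdot(\,\cdot\,)$ forces $v\in\mathcal{S}$ and then the symmetry of $\widehat{A}$ gives $v=0$. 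Hence $\widehat{A}$ is essentially self-adjoint, and an analogous argument produces a compact inverse (or resolvent) by writing $(\widehat{A}\pm iI)^{-1}=\widehat{B}+$ (compact correction).

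Third, the spectral theorem for self-adjoint operators with compact resolvent immediately delivers (i) the discreteness of the spectrum and (ii) an orthonormal basis $(\phi_{j})$ of eigenfunctions with real eigenvalues satisfying $|\lambda_{j}|\to\infty$. To upgrade $\phi_{j}\in L^{2}$ to $\phi_{j}\in\mathcal{S}(\mathbb{R}^{n})$ I would iterate the parametrix identity: from $\widehat{A}\phi_{j}=\lambda_{j}\phi_{j}$ one gets $\phi_{j}=\lambda_{j}\widehat{B}\phi_{j}-\widehat{R}_{2}\phi_{j}$, and repeatedly applying $\widehat{B}$ shows $\phi_{j}\in\bigcap_{s}Q^{s}=\mathcal{S}(\mathbb{R}^{n})$.

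The main obstacle is the first step: the explicit construction of the parametrix inside $H\Gamma_{\rho}^{-m_{0},-m_{1}}$, together with the verification that the remainders actually lie in $\Gamma^{-\infty}$. This rests on carefully bounding the iterated twisted products of symbols satisfying only the one-sided estimates (\ref{shu}) (as opposed to genuine two-sided estimates), and then on the Rellich compactness theorem for Shubin--Sobolev spaces, which is what ultimately converts global hypoellipticity with $m_{0}>0$ into compactness of the resolvent. Once these two technical points are in hand, the rest of the argument is a straightforward functional-analytic packaging.
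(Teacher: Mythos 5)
The paper does not prove this proposition itself; it cites it as a known result from Shubin's book, Chapter 4. Your sketch correctly reproduces the standard argument from that reference: parametrix construction from the hypoellipticity estimates (\ref{shu}), global hypoellipticity to show deficiency indices vanish (hence essential self-adjointness) and to bootstrap eigenfunctions into $\mathcal{S}(\mathbb{R}^{n})$, compactness of the resolvent via the Shubin--Sobolev Rellich embedding, and the spectral theorem for self-adjoint operators with compact resolvent. The one blemish is the garbled identity in your second step; the intended line of reasoning is simply $v=\widehat{B}\widehat{A}v-\widehat{R}_{2}v=i\widehat{B}v-\widehat{R}_{2}v$, which, iterated, puts $v$ in $\bigcap_{s}Q^{s}=\mathcal{S}(\mathbb{R}^{n})$, after which symmetry forces $v=0$.
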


We observe that in the Proposition above there exists a basis of
eigenfunctions belonging to $\mathcal{S}(\mathbb{R}^{n})$; this property
follows from the global hypoellipticity of operators with Weyl symbol in
$H\Gamma_{\rho}^{m_{1},m_{0}}(\mathbb{R}^{2n})$:%
\[
u\in\mathcal{S}^{\prime}(\mathbb{R}^{n})\text{ \textit{and} }\widehat{A}%
u\in\mathcal{S}(\mathbb{R}^{n})\text{ \textit{implies} }u\in\mathcal{S}%
(\mathbb{R}^{n})
\]
(global hypoellipticity is thus a stronger property than that of the usual
hypoellipticity, familiar from the (micro)local analysis of pseudodifferential operators).

We will also need the following elementary result that says that the symbol
classes $H\Gamma_{\rho}^{m_{1},m_{0}}(\mathbb{R}^{2n})$ are invariant under
linear changes of variables:

\begin{lemma}
\label{lemmas}Let $a\in H\Gamma_{\rho}^{m_{1},m_{0}}(\mathbb{R}^{2n})$ with
$m_{0}>0$. For every linear automorphism $f$ of $\mathbb{R}^{2n}$ we have
$f^{\ast}a=a\circ f\in H\Gamma_{\rho}^{m_{1},m_{0}}(\mathbb{R}^{2n})$.
\end{lemma}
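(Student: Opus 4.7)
The plan is to verify the two defining conditions of the Shubin class $H\Gamma_\rho^{m_1,m_0}(\mathbb{R}^{2n})$ directly for the composition $a\circ f$. Smoothness of $a\circ f$ is immediate since $a\in C^\infty$ and $f$ is linear, so the real work is transferring the ellipticity inequalities and the derivative estimates from $a$ to $a\circ f$, with possibly modified constants.

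The single observation that drives everything is that a linear automorphism $f\in GL(2n,\mathbb{R})$ satisfies the two-sided norm comparison
\[
c_1|z|\leq |fz|\leq c_2|z| \qquad\text{for all } z\in\mathbb{R}^{2n},
\]
with, e.g., $c_1=\|f^{-1}\|^{-1}$ and $c_2=\|f\|$. This lets one pass freely between estimates in $|z|$ and estimates in $|fz|$, up to a multiplicative constant, for any real exponent. In particular, enlarging the threshold $R$ from (\ref{shu}) to $R'=R/c_1$ guarantees $|fz|\geq R$ whenever $|z|\geq R'$, so the ellipticity bounds for $a$ at the point $fz$ translate into bounds of the form
\[
C_0' |z|^{m_0} \leq |(a\circ f)(z)| \leq C_1' |z|^{m_1} \qquad\text{for } |z|\geq R',
\]
where $C_0',C_1'$ absorb factors of the type $c_1^{m_0}$ or $c_2^{m_1}$ (one chooses the appropriate side of the comparison depending on the sign of each exponent, which is why the two-sided nature of the norm inequality is useful).

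For the derivative decay, the chain rule applied iteratively gives
\[
\partial_z^\alpha(a\circ f)(z)=\sum_{|\beta|=|\alpha|} C_{\alpha,\beta}(f)\,(\partial_z^\beta a)(fz),
\]
with coefficients $C_{\alpha,\beta}(f)$ that are finite products of entries of $f$, depending only on $\alpha,\beta,f$. Each term is controlled by the Shubin estimate $|(\partial^\beta a)(fz)|\leq C_\beta |a(fz)|\,|fz|^{-\rho|\alpha|}$ valid for $|fz|\geq R$; using $|a(fz)|=|(a\circ f)(z)|$ together with $|fz|^{-\rho|\alpha|}\leq c_1^{-\rho|\alpha|}|z|^{-\rho|\alpha|}$ yields the required bound
\[
|\partial_z^\alpha(a\circ f)(z)|\leq C_\alpha' |(a\circ f)(z)|\,|z|^{-\rho|\alpha|} \qquad\text{for } |z|\geq R'.
\]

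I do not expect any serious obstacle: the argument is bookkeeping on constants, powered entirely by the norm comparison $|fz|\asymp|z|$ and the linearity of $f$ (which keeps the differential order fixed under chain-rule expansion). The one minor subtlety worth flagging is handling the signs of $m_0$ and $m_1$ in the ellipticity inequalities, but the two-sided nature of the norm comparison makes this automatic — one simply selects the appropriate side of the inequality so that the modified constants remain positive.
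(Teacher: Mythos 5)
Your proof is correct and follows essentially the same route as the paper's: both hinge on the two-sided norm comparison $c_1|z|\leq |fz|\leq c_2|z|$ to transfer the ellipticity bounds (using $m_0>0$) and then invoke the chain rule, which for linear $f$ preserves the order $|\alpha|$, so each term inherits the same Shubin decay $|fz|^{-\rho|\alpha|}$. If anything yours is slightly more careful: you explicitly enlarge the threshold to $R'=R/c_1$, and you write the chain-rule expansion correctly as a sum $\sum_{|\beta|=|\alpha|}C_{\alpha,\beta}(f)(\partial^\beta a)(fz)$, whereas the paper states the pointwise bound $|\partial_z^\alpha(a\circ f)(z)|\leq B_\alpha|\partial_z^\alpha a(fz)|$ which is not literally true in general — though since every $\partial^\beta a$ with $|\beta|=|\alpha|$ obeys the same estimate, the argument goes through exactly as you present it.
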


\begin{proof}
Set $a^{\prime}(z)=a(fz)$; clearly $a^{\prime}\in C^{\infty}(\mathbb{R}^{2n}%
)$. We now note that there exist $\lambda,\mu>0$ such that $\lambda
|z|\leq|fz|\leq\mu|z|$ for all $z\in\mathbb{R}^{n}$. Since $m_{0}>0$ it
follows that
\[
C_{0}^{\prime}|z|^{m_{0}}\leq|a^{\prime}(z)|\leq C_{1}^{^{\prime}}|z|^{m_{1}}%
\]
with $C_{0}^{\prime}=C_{0}\lambda^{m_{0}}$ and $C_{1}^{^{\prime}}=C_{1}%
\mu^{m_{1}}$. Next, we observe that for every $\alpha\in\mathbb{N}^{2n}$,
$|\alpha|\neq0$, there exists $B_{\alpha}>0$ such that $|\partial_{z}^{\alpha
}a^{\prime}(z)|\leq B_{\alpha}|\partial_{z}^{\alpha}a(fz)|$ (this is easily
seen by induction on $|\alpha|$ and using the chain rule); we thus have%
\[
|\partial_{z}^{\alpha}a^{\prime}(z)|\leq C_{\alpha}B_{\alpha}|a^{\prime
}(z)||fz|^{-\rho|\alpha|}\leq C_{\alpha}^{\prime}|a^{\prime}(z)||z|^{-\rho
|\alpha|}%
\]
with $C_{\alpha}^{\prime}=B_{\alpha}C_{\alpha}\mu^{-\rho|\alpha|}$. Hence
$a^{\prime}\in H\Gamma_{\rho}^{m_{1},m_{0}}(\mathbb{R}^{2n})$.
\end{proof}

\subsection{Application to the operators $\protect\widetilde{A}_{\omega}$}

Let us now apply the theory of Shubin classes to the study of some spectral
properties of the operators $\widetilde{A}_{\omega}$. We begin by studying the
standard case $\Omega=J$; as previously we set $\widetilde{A}_{\omega
}=\widetilde{A}$. The extension to the general case will be done using again
the reduction result in Proposition \ref{proptwo}.

Proposition \ref{propob} is the key to the following general spectral result,
which shows how to obtain the eigenvalues and eigenvectors of $\widetilde{A}$
from those of $\widehat{A}$:

\begin{proposition}
\label{eigenva1}Let $a\in H\Gamma_{\rho}^{m_{1},m_{0}}(\mathbb{R}^{2n})$ be
real, and $m_{0}>0$. Then:

(i) The eigenvalues of the operators $\widehat{A}$ and $\widetilde{A}$ are the
same; and $\widetilde{A}$ has discrete spectrum $(\lambda_{j})_{j\in
\mathbb{N}}$ with $\lim_{j\rightarrow\infty}|\lambda_{j}|=\infty$;

(ii) The eigenfunctions of $\widetilde{A}$ are given by $\Phi_{j,k}%
=W_{\phi_{j}}\phi_{k}$ where the $\phi_{j}$ are the eigenfunctions of the
operator $\widehat{A}$.

(iii) Conversely, if $U$ is an eigenfunction of $\widetilde{A}$, then
$u=W_{\phi}^{\ast}U$ is an eigenvector of $\widehat{A}$ corresponding to the
same eigenvalue.
\end{proposition}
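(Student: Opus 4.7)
The plan is to combine three ingredients already in hand: the Shubin result of Proposition~\ref{propeigen} applied to $\widehat{A}$, the completeness statement of Proposition~\ref{propob}, and the intertwining relations \eqref{fund}. Since $a$ is real and lies in $H\Gamma_{\rho}^{m_{1},m_{0}}$ with $m_{0}>0$, Proposition~\ref{propeigen} provides an orthonormal basis $(\phi_{j})_{j\in\mathbb{N}}$ of $L^{2}(\mathbb{R}^{n})$ consisting of Schwartz eigenfunctions of $\widehat{A}$ with real eigenvalues $\lambda_{j}$ such that $|\lambda_{j}|\to\infty$.

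To prove (ii), fix any window $\phi$ and, for each pair $(j,k)$, set $\Phi_{j,k}=W_{\phi_{j}}\phi_{k}$. Applying the first intertwining relation in \eqref{fund} with window $\phi_{j}$ gives
\begin{equation*}
\widetilde{A}\Phi_{j,k}=\widetilde{A}W_{\phi_{j}}\phi_{k}=W_{\phi_{j}}\widehat{A}\phi_{k}=\lambda_{k}W_{\phi_{j}}\phi_{k}=\lambda_{k}\Phi_{j,k},
\end{equation*}
so each $\Phi_{j,k}$ is an eigenfunction of $\widetilde{A}$ with eigenvalue $\lambda_{k}$. For (i), Proposition~\ref{propob}(ii) guarantees that $(\Phi_{j,k})_{j,k}$ is an orthonormal basis of $L^{2}(\mathbb{R}^{2n})$. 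Thus $\widetilde{A}$ admits a complete orthonormal system of eigenvectors with real eigenvalues drawn from $\{\lambda_{k}\}$, which makes $\widetilde{A}$ essentially self-adjoint and fixes its spectrum as exactly $(\lambda_{k})_{k\in\mathbb{N}}$; since $|\lambda_{k}|\to\infty$ the spectrum is discrete in the required sense. (Every $\lambda_{k}$ appears in the $\widetilde{A}$-spectrum with infinite multiplicity, the index $j$ labeling an infinite family of independent eigenvectors.)

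For (iii), suppose $\widetilde{A}U=\mu U$ with $U\in L^{2}(\mathbb{R}^{2n})\setminus\{0\}$. Applying the second intertwining relation in \eqref{fund} and using $W_{\phi}^{\ast}\widetilde{A}=\widehat{A}W_{\phi}^{\ast}$ gives
\begin{equation*}
\widehat{A}(W_{\phi}^{\ast}U)=W_{\phi}^{\ast}\widetilde{A}U=\mu\,W_{\phi}^{\ast}U,
\end{equation*}
so $W_{\phi}^{\ast}U$ is either zero or an eigenvector of $\widehat{A}$ for $\mu$. To rule out the degenerate situation one chooses $\phi$ well: by the completeness argument in the proof of Proposition~\ref{propob}, if $W_{\phi}^{\ast}U=0$ for every $\phi\in\mathcal{S}(\mathbb{R}^{n})$ then $U=0$, contradicting $U\neq 0$. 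Hence a window $\phi$ exists for which $W_{\phi}^{\ast}U\neq 0$, completing (iii).

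The only conceptual care point is justifying that $\widetilde{A}$ is essentially self-adjoint on $L^{2}(\mathbb{R}^{2n})$: one cannot invoke Shubin directly for the symbol $\widetilde{a}(z,\zeta)=a(z-\tfrac{1}{2}J\zeta)$ of Proposition~\ref{propone}, because this symbol fails the ellipticity lower bound along the characteristic directions $z=\tfrac{1}{2}J\zeta$ and therefore is \emph{not} in any Shubin class on $\mathbb{R}^{4n}$. The remedy is the argument above: a symmetric operator with a complete orthonormal basis of eigenvectors (with real eigenvalues) in its domain is automatically essentially self-adjoint, and this is precisely what the intertwining combined with Proposition~\ref{propob} delivers. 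This is the step that would be the main (mild) obstacle if approached naively via symbolic calculus on the double phase space.
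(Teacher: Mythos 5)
Your proof is correct and follows essentially the same route as the paper: both rely on the intertwining relations \eqref{fund}, Proposition~\ref{propeigen} for $\widehat{A}$, and the completeness statement of Proposition~\ref{propob}, applied in the same places. The one place you go somewhat further is in part (i), where you explicitly invoke the orthonormal basis $(\Phi_{j,k})_{j,k}$ of eigenvectors to conclude essential self-adjointness of $\widetilde{A}$ and to pin down its spectrum, whereas the paper more tersely asserts that the discreteness of $\operatorname{spec}(\widetilde{A})$ ``follows from Proposition~\ref{propeigen}''; your added remark that Shubin's theory cannot be applied directly to the double-phase-space symbol $\widetilde{a}(z,\zeta)=a(z-\tfrac{1}{2}J\zeta)$, which degenerates along $z=\tfrac12 J\zeta$ and so is never elliptic in any $H\Gamma_\rho^{m_1,m_0}(\mathbb{R}^{4n})$, is a worthwhile clarification of why the detour through the intertwiners is genuinely necessary.
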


\begin{proof}
That every eigenvalue of $\widehat{A}$ also is an eigenvalue of $\widetilde{A}%
$ is clear: if $\widehat{A}u=\lambda u$ for some $u\neq0$, then
\[
\widetilde{A}(W_{\phi}u)=W_{\phi}\widehat{A}u=\lambda W_{\phi}u
\]
and $U=W_{\phi}u\neq0$; this proves at the same time that $W_{\phi}u$ is an
eigenvector of $\widehat{A}$ because $W_{\phi}$ has kernel $\{0\}$. Assume
conversely that $\widetilde{A}U=\lambda U$ for $U\in L^{2}(\mathbb{R}^{2n})$,
$U\neq0$, and $\lambda\in\mathbb{R}$. For every $\phi$ we have
\[
\widehat{A}W_{\phi}^{\ast}U=W_{\phi}^{\ast}\widetilde{A}U=\lambda W_{\phi
}^{\ast}U
\]
hence $\lambda$ is an eigenvalue of $\widehat{A}$ and $u$ an eigenvector if
$u=W_{\phi}^{\ast}U\neq0$. That $\widetilde{A}$ has discrete spectrum
$(\lambda_{j})_{j\in\mathbb{N}}$ with $\lim_{j\rightarrow\infty}|\lambda
_{j}|=\infty$ now follows from Proposition \ref{propeigen}. We have $W_{\phi
}u=W_{\phi}W_{\phi}^{\ast}U=P_{\phi}U$ where $P_{\phi}$ is the orthogonal
projection on the range $\mathcal{H}_{\phi}$ of $W_{\phi}$. Assume that $u=0$;
then $P_{\phi}U=0$ for every $\phi\in\mathcal{S}(\mathbb{R}^{n})$, and hence
$U=0$ in view of Proposition \ref{propob}.
\end{proof}

Let us now consider the general case of operators $\widetilde{A}_{\omega}$.

\begin{proposition}
Let $a\in H\Gamma_{\rho}^{m_{1},m_{0}}(\mathbb{R}^{2n})$ be real, and
$m_{0}>0$. Then:

(i) The operator $\widetilde{A}_{\omega}$ has discrete spectrum $(\lambda
_{j})_{j\in\mathbb{N}}$ with $\lim_{j\rightarrow\infty}|\lambda_{j}|=\infty$.

(ii) The eigenfunctions of $\widetilde{A}_{\omega}$ are the functions
$\Phi_{j}=W_{f,\phi}\phi_{j}$ where the $\phi_{j}$ are the eigenfunctions of
the operator $\widehat{A}^{\prime}$ with Weyl symbol $a^{\prime}=f^{\ast}a$.

(iii) We have $\Phi_{j,k}=W_{f,\phi_{j}}\phi_{k}\in\mathcal{S}(\mathbb{R}%
^{2n})$ and the $\Phi_{j,k}$ form an orthonormal basis of $\mathcal{S}%
(\mathbb{R}^{2n})$.
\end{proposition}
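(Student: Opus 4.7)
The plan is to reduce the general case $\widetilde{A}_{\omega}$ to the standard case $\widetilde{A}=\widetilde{A}_{\sigma}$ via the linear symplectomorphism $f:(\mathbb{R}^{2n},\sigma)\to(\mathbb{R}^{2n},\omega)$, and then to assemble the intertwining results of Proposition \ref{aprime}, the symbol-class invariance Lemma \ref{lemmas}, Shubin's Proposition \ref{propeigen}, and the completeness statement of Proposition \ref{propob}. This is essentially the pattern already used in Proposition \ref{eigenva1}, now twisted by $M_{f}$.

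First I would invoke Lemma \ref{lemmas}, which uses $m_{0}>0$, to conclude that the pulled-back symbol $a^{\prime}=a\circ f$ again lies in $H\Gamma_{\rho}^{m_{1},m_{0}}(\mathbb{R}^{2n})$ and is real. Shubin's Proposition \ref{propeigen} then applies to the standard Weyl operator $\widehat{A^{\prime}}\overset{\text{Weyl}}{\longleftrightarrow}a^{\prime}$: it is essentially self-adjoint with discrete spectrum $(\lambda_{j})_{j\in\mathbb{N}}$, $|\lambda_{j}|\to\infty$, and there is an orthonormal basis $(\phi_{j})_{j\in\mathbb{N}}$ of $L^{2}(\mathbb{R}^{n})$ consisting of eigenfunctions $\phi_{j}\in\mathcal{S}(\mathbb{R}^{n})$ with $\widehat{A^{\prime}}\phi_{j}=\lambda_{j}\phi_{j}$.

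Next I would use the intertwining relations $\widetilde{A}_{\omega}W_{f,\phi}=W_{f,\phi}\widehat{A^{\prime}}$ and $W_{f,\phi}^{\ast}\widetilde{A}_{\omega}=\widehat{A^{\prime}}W_{f,\phi}^{\ast}$ of Proposition \ref{aprime} to transfer this spectral information to $\widetilde{A}_{\omega}$, repeating verbatim the argument of Proposition \ref{eigenva1}. For any window $\phi$ and each index $k$, $W_{f,\phi}\phi_{k}\neq 0$ (since $W_{f,\phi}$ is a partial isometry with trivial kernel) and the first intertwining gives $\widetilde{A}_{\omega}(W_{f,\phi}\phi_{k})=\lambda_{k}W_{f,\phi}\phi_{k}$, which yields (ii). Conversely, if $\widetilde{A}_{\omega}U=\lambda U$ with $U\neq 0$, the second intertwining yields $\widehat{A^{\prime}}(W_{f,\phi}^{\ast}U)=\lambda W_{f,\phi}^{\ast}U$; and $W_{f,\phi}^{\ast}U$ cannot vanish for every $\phi$, else the orthogonal projections $P_{f,\phi}=W_{f,\phi}W_{f,\phi}^{\ast}$ would all kill $U$, contradicting the completeness of $\bigcup_{\phi}\mathcal{H}_{f,\phi}$ in $L^{2}(\mathbb{R}^{2n})$ that follows from Proposition \ref{propob}. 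Hence the eigenvalues of $\widetilde{A}_{\omega}$ coincide with those of $\widehat{A^{\prime}}$, which establishes (i).

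For (iii), I would combine two ingredients. First, each $\phi_{j}$ lies in $\mathcal{S}(\mathbb{R}^{n})$, and the cross-Wigner transform of two Schwartz functions is Schwartz; thus $W_{\phi_{j}}\phi_{k}=(2\pi)^{n/2}W(\phi_{k},\phi_{j})\in\mathcal{S}(\mathbb{R}^{2n})$, and since the linear change of variables $M_{f}^{-1}$ preserves $\mathcal{S}(\mathbb{R}^{2n})$ we obtain $\Phi_{j,k}=W_{f,\phi_{j}}\phi_{k}=M_{f}^{-1}W_{\phi_{j}}\phi_{k}\in\mathcal{S}(\mathbb{R}^{2n})$. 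Second, since $(\phi_{j})_{j}$ is an orthonormal basis of $L^{2}(\mathbb{R}^{n})$, Proposition \ref{propob}(ii) (applied to the same family both as windows and as the family to be transformed) gives directly that $(\Phi_{j,k})_{j,k}$ is an orthonormal basis of $L^{2}(\mathbb{R}^{2n})$ lying inside $\mathcal{S}(\mathbb{R}^{2n})$. The main obstacle is a bookkeeping one: each eigenvalue of $\widetilde{A}_{\omega}$ acquires infinite multiplicity because of the freedom in the choice of window $\phi_{j}$, so one must be careful when matching the ``singly indexed'' statement (ii) with the ``doubly indexed'' eigenbasis in (iii), and to ensure that the hypothesis $m_{0}>0$ is used precisely at the point where Lemma \ref{lemmas} intervenes.
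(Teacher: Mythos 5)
Your proposal is correct and follows essentially the same path as the paper: reduce to the standard case via the symplectomorphism $f$, use Lemma \ref{lemmas} (where $m_{0}>0$ enters) to keep $a\circ f$ in the Shubin class, apply Proposition \ref{propeigen} to $\widehat{A'}$, and transfer spectral data to $\widetilde{A}_{\omega}$ by the intertwining of Proposition \ref{aprime}, exactly as the paper instructs by referring back to Proposition \ref{eigenva1} mutatis mutandis. Your added remarks on (iii) (Schwartz regularity of the cross-Wigner transform, preservation of $\mathcal{S}(\mathbb{R}^{2n})$ under $M_{f}^{-1}$, and the completeness from Proposition \ref{propob}) are a useful elaboration of what the paper leaves implicit.
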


\begin{proof}
Recall that we have shown in Proposition \ref{aprime} that $\widetilde{A}%
_{\omega}W_{f,\phi}=W_{f,\phi}\widehat{A^{\prime}}$ where $\widehat{A^{\prime
}}\overset{\text{Weyl}}{\longleftrightarrow}a\circ f$. In view of Lemma
\ref{lemmas} the Shubin class $H\Gamma_{\rho}^{m_{1},m_{0}}(\mathbb{R}^{2n})$
is preserved by linear changes of variables. The proof of the Proposition now
follows \textit{mutatis mutandis} from that of Proposition \ref{eigenva1}
replacing $\widehat{A}$ with the operator $\widehat{A^{\prime}}$ with Weyl
symbol $a\circ f$ and using the intertwining formula $\widetilde{A}_{\omega
}W_{f,\phi}=W_{f,\phi}\widehat{A^{\prime}}$ together with the fact that
$W_{f,\phi}=M_{f}^{-1}W_{f,\phi}$ where $M_{f}^{-1}$ is a unitary operator.
\end{proof}

\subsection{Gelfand triples and generalized eigenvalues}

Eigenvectors of pseudo-differential operators are not always elements of a
Hilbert space, but of a distribution space. The notion of Gelfand triple (or
rigged Hilbert spaces, as it was called by the physicist Dirac) formalizes
this observation, that we briefly recall here since it provides the natural
setting for the discussion of the spectral properties of our classes of
pseudodifferential operators, e.g. if the symbol is not an element of
$H\Gamma_{\rho}^{m_{1},m_{0}}(\mathbb{R}^{2n})$.

A (Banach) Gelfand triple $(\mathcal{B},\mathcal{H},\mathcal{B}^{\prime})$
consists of a (Banach) Fr\'{e}chet space $\mathcal{B}$ which is continuously
and densely embedded into a Hilbert space ${\mathcal{H}}$, which in turn is
$w^{\ast}$-continuously and densely embedded into the dual (Banach)
Fr\'{e}chet space $\mathcal{B}^{\prime}$. In this definition one identifies
$\mathcal{H}$ with its dual $\mathcal{H}^{\ast}$ and the scalar product on
$\mathcal{H}$ thus extends in a natural way into a pairing between
$\mathcal{B}\subset\mathcal{H}$ and $\mathcal{B}^{\prime}\supset\mathcal{H}$.

The standard example of a Gelfand triple is $(\mathcal{S}(\mathbb{R}%
^{n}),L^{2}(\mathbb{R}^{n}),\mathcal{S}^{\prime}(\mathbb{R}^{n}))$ but there
are many other examples; one of them is $(M_{0}^{1}(\mathbb{R}^{n}%
),L^{2}(\mathbb{R}^{n}),M_{0}^{1}(\mathbb{R}^{n})^{\prime})$ where $M_{0}%
^{1}(\mathbb{R}^{n})$ is the Feichtinger algebra which is a particular
modulation space (see Subsection \ref{subsecmqs} below). The use of this
Gelfand triple not only offers a better description of self-adjoint operators
but it also allows a simplification of many proofs.

Given a Gelfand triple $(\mathcal{B},\mathcal{H},\mathcal{B}^{\prime})$ one
proves that every self-adjoint operator $A:\mathcal{B}\longrightarrow
\mathcal{B}$ has a complete family of generalized eigenvectors $(\psi_{\alpha
})_{\alpha}=\{\psi_{\alpha}\in\mathcal{B}^{\prime}:\alpha\in\mathbb{A}\}$
($\mathbb{A}$ an index set), defined as follows: for every $\alpha
\in\mathbb{A}$ there exists $\lambda_{\alpha}\in\mathbb{C}$ such that%
\[
(\psi_{\alpha},A\phi)=\lambda_{\alpha}(\psi_{\alpha},\phi)\text{ \ for every
}\phi\in\mathcal{B}\text{.}%
\]
Completeness of the family $(\psi_{\alpha})_{\alpha}$ means that there exists
at least one $\psi_{\alpha}$ such that $(\psi_{\alpha},\phi)$ $\neq0$ for
every $\phi\in\mathcal{B} \backslash\left\{  0 \right\}  $. The scalars
$\lambda_{\alpha}$ are called generalized eigenvectors. For more see
\cite{cofelu08,feko98,fe09}.

\begin{proposition}
Let $a$ be a real-valued symbol and choose $(\mathcal{S}(\mathbb{R}^{n}%
),L^{2}(\mathbb{R}^{n}),\mathcal{S}^{\prime}(\mathbb{R}^{n}))$ as Gelfand triple.

(i) The generalized eigenvalues of $\widetilde{A}_{\omega}$ and those of the
Weyl operator $\widehat{A^{\prime}}\overset{\text{Weyl}}{\longleftrightarrow
}a\circ f$ are the same;

(ii) Let $u$ be a generalized eigenvector of $\widehat{A^{\prime}}$:
$\widehat{A^{\prime}}u=\lambda u$. Then $U=W_{f,\phi}u$ satisfies
$\widetilde{A}_{\omega}U=\lambda U$;

(iii) Conversely, if $U$ is a generalized eigenvector of $\widetilde{A}%
_{\omega}$ then $u=W_{\phi}^{\ast}U$ is a generalized eigenvector of
$\widehat{A^{\prime}}$ corresponding to the same generalized eigenvalue.
\end{proposition}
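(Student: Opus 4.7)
The plan is to follow \emph{mutatis mutandis} the proof of Proposition \ref{eigenva1}, replacing $L^{2}$-eigenfunctions by generalized eigenvectors in the Gelfand triple $(\mathcal{S}(\mathbb{R}^{n}),L^{2}(\mathbb{R}^{n}),\mathcal{S}^{\prime}(\mathbb{R}^{n}))$. The two workhorses are the intertwining identities
\[
\widetilde{A}_{\omega}W_{f,\phi}=W_{f,\phi}\widehat{A^{\prime}},\qquad W_{f,\phi}^{\ast}\widetilde{A}_{\omega}=\widehat{A^{\prime}}W_{f,\phi}^{\ast}
\]
already proved in Proposition \ref{aprime}, now read at the level of tempered distributions. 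Since $a$, and hence $a^{\prime}=a\circ f$, is real, both $\widehat{A^{\prime}}$ and $\widetilde{A}_{\omega}$ are formally self-adjoint, and so extend by transposition to continuous endomorphisms of $\mathcal{S}^{\prime}$; together with the already-established distributional extensions of $W_{f,\phi}$ and $W_{f,\phi}^{\ast}$ this lifts the two intertwining identities directly to $\mathcal{S}^{\prime}$. In that framework the defining condition $(\psi,A\phi)=\lambda(\psi,\phi)$ is just $A\psi=\lambda\psi$ read in $\mathcal{S}^{\prime}$.

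Assertion (ii) then becomes a one-liner: from $\widehat{A^{\prime}}u=\lambda u$, apply $W_{f,\phi}$ and commute it past $\widehat{A^{\prime}}$ to obtain $\widetilde{A}_{\omega}U=\lambda U$ with $U=W_{f,\phi}u$. Non-triviality of $U$ is automatic because, for any fixed nonzero window $\phi$, $W_{f,\phi}=M_{f}^{-1}W_{\phi}$ has trivial kernel on $\mathcal{S}^{\prime}(\mathbb{R}^{n})$ --- a standard Moyal-type fact coming from the injectivity of $u\mapsto W(u,\phi)$. Assertion (iii) is the formal dual: apply $W_{f,\phi}^{\ast}$ to $\widetilde{A}_{\omega}U=\lambda U$ and use the second intertwining identity to arrive at $\widehat{A^{\prime}}u=\lambda u$ with $u=W_{f,\phi}^{\ast}U$.

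Assertion (i) will then follow by combining (ii) and (iii), \emph{provided} we can choose, for every nonzero generalized eigenvector $U$ of $\widetilde{A}_{\omega}$, a window $\phi\in\mathcal{S}(\mathbb{R}^{n})$ such that $u=W_{f,\phi}^{\ast}U\neq 0$. This is the single delicate point and the expected main obstacle, since it asks us to re-run the argument of Proposition \ref{propob} in the distributional setting. Concretely, if $W_{f,\phi}^{\ast}U=0$ for every $\phi\in\mathcal{S}(\mathbb{R}^{n})$, the same calculation used in Proposition \ref{propob} --- interpreting $(2\pi)^{n/2}U$ as the Weyl symbol of an operator $\widehat{A}_{U}:\mathcal{S}(\mathbb{R}^{n})\to\mathcal{S}^{\prime}(\mathbb{R}^{n})$ and pairing against cross-Wigner transforms --- forces $\langle\widehat{A}_{U}\phi,u\rangle=0$ for all $\phi,u\in\mathcal{S}(\mathbb{R}^{n})$, hence $\widehat{A}_{U}=0$ and, by bijectivity of the Weyl correspondence on $\mathcal{S}^{\prime}(\mathbb{R}^{2n})$, $U=0$. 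The only verification left is that the pairings $\langle U,W_{f,\phi}u\rangle$ and $\langle\widehat{A}_{U}\phi,u\rangle$ remain well-defined when $U\in\mathcal{S}^{\prime}(\mathbb{R}^{2n})$ and $\phi,u\in\mathcal{S}(\mathbb{R}^{n})$, which is immediate from $W_{f,\phi}u\in\mathcal{S}(\mathbb{R}^{2n})$.
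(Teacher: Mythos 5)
Your proposal is correct and follows the same strategy as the paper's own proof, which simply remarks that the arguments of Proposition \ref{eigenva1} ``remain valid in this context if we interpret them in the weak sense.'' You have spelled out what that means: lift the intertwining relations $\widetilde{A}_{\omega}W_{f,\phi}=W_{f,\phi}\widehat{A^{\prime}}$ and $W_{f,\phi}^{\ast}\widetilde{A}_{\omega}=\widehat{A^{\prime}}W_{f,\phi}^{\ast}$ to $\mathcal{S}^{\prime}$, and re-run the completeness argument of Proposition \ref{propob} distributionally to produce a window $\phi$ with $W_{f,\phi}^{\ast}U\neq 0$ — a point the paper glosses over but which you correctly identify as the only delicate step.
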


\begin{proof}
The basic idea is that it suffices to establish the results for the test
functions $\mathcal{S}(\mathbb{R}^{n})$. First note that the assumption on $a$
guarantees the self-adjointness of $\widehat{A},\widetilde{A}$ and
$\widetilde{A}_{\omega}$ therefore it yields the existence of generalized
eigenvalues and eigenvectors. The arguments of the preceding question remain
valid in this context if we interpret them in the weak sense.
\end{proof}

\section{Regularity in Modulation Spaces}

The modulation spaces $M_{v}^{p,q}(\mathbb{R}^{n})$ introduced in the 80's by
Feichtinger \cite{fe81,fe06,fe03-1} and developed by Feichtinger and
Gr\"{o}chenig \cite{gr00} are a tool of choice for relating the regularity
properties of the phase space operator $\widetilde{A}_{\omega}$ to those of
the corresponding operator $\widehat{A}$. In addition, the modulation spaces
$M_{v}^{\infty,1}(\mathbb{R}^{n}\oplus\mathbb{R}^{n})$ (which contain as a
particular case the so-called Sj\"{o}strand class) will supply us with symbol
classes defined without any reference to differentiability properties. We
define the modulation spaces in terms of the cross-Wigner transform; in the
standard literature on the topic (especially in time-frequency analysis) they
are defined using a closely related object, the \textquotedblleft windowed
short-time Fourier transform\textquotedblright. Because of the particular form
of the weighting functions we use, it is easy to see that both definitions coincide.

\subsection{The spaces $M_{s}^{q}$\label{subsecmqs}}

Let $s\geq0$ and set $v_{s}(z)=(1+|z|^{2})^{s/2}$. We note that for every
$f\in GL(2n,\mathbb{R})$ there exists a constant $C_{s,f}$ such that%
\begin{equation}
v_{s}(fz)\leq C_{s,f}v_{s}(z). \label{csf}%
\end{equation}
The modulation space $M_{s}^{q}(\mathbb{R}^{n})$ ($q\geq1$) consists of all
distributions $u\in\mathcal{S}^{\prime}(\mathbb{R}^{n})$ such that
$W(u,\phi)\in L_{s}^{q}(\mathbb{R}^{2n})$ for some window $\phi\in
\mathcal{S}(\mathbb{R}^{n})$; here $L_{s}^{q}(\mathbb{R}^{2n})$ consists of
all functions $U$ on $\mathbb{R}^{2n}$ such that $v_{s}U\in L^{q}%
(\mathbb{R}^{2n})$. One shows that this definition is independent of the
choice of window $\phi$ and that if it holds for one $\phi$ in $\mathcal{S}%
(\mathbb{R}^{n})$ then it holds for all. Moreover the formula
\[
||u||_{\phi,M_{s}^{q}}=|||W_{\phi}u|||_{L_{s}^{q}}=\left(  \int_{\mathbb{R}%
^{2n}}|W_{\phi}u(z)|^{q}v_{s}^{q}(z)dz\right)  ^{\frac{1}{q}}%
\]
defines a norm on $M_{s}^{q}(\mathbb{R}^{n})$ and different $\phi$ lead to
equivalent norms. The topology defined by any of these norm endows $M_{s}%
^{q}(\mathbb{R}^{n})$ with a Banach space structure. The spaces $M_{s}^{q}$
increase with the parameter $q$: if $q\leq q^{\prime}$ then $M_{s}%
^{q}(\mathbb{R}^{n})\subset M_{s}^{q^{\prime}}(\mathbb{R}^{n})$. Following
result summarizes the main algebraic properties of $M_{s}^{q}(\mathbb{R}^{n})$:

\begin{proposition}
(i) The modulation spaces $M_{s}^{q}(\mathbb{R}^{n})$ are invariant under the
action of the metaplectic group $\operatorname*{Mp}(2n,\sigma)$: $u\in
M_{s}^{q}(\mathbb{R}^{n})$ if and only $\widehat{S}u\in M_{s}^{q}%
(\mathbb{R}^{n})$ for every $\widehat{S}\in\operatorname*{Mp}(2n,\sigma)$;

(ii) There exists a constant $C>0$ such that for every $z\in\mathbb{R}^{2n}$
we have%
\[
||\widehat{T}(z)u||_{\phi,M_{s}^{q}}\leq Cv_{s}(z)||u||_{\phi,M_{s}^{q}};
\]
in particular $M_{s}^{q}(\mathbb{R}^{n})$ is invariant under the action of the
Heisenberg--Weyl operators;

(iii) Let $f\in GL(n,\mathbb{R})$. We have $u\in M_{s}^{q}(\mathbb{R}^{n})$ if
and only if $f^{\ast}u=f\circ u\in M_{s}^{q}(\mathbb{R}^{n})$.
\end{proposition}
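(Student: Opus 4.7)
The plan is to treat all three parts by pushing the group actions through the cross-Wigner transform and then exploiting the polynomial growth of the weight $v_{s}$. The key structural fact is that all the operators we want to dilate by (elements of $\operatorname{Mp}(2n,\sigma)$, Heisenberg--Weyl operators $\widehat{T}(z)$, and dilations $u\mapsto u\circ f$) are either metaplectic or nearly so, and each has a clean intertwining rule with $W(u,\phi)$ that was already recorded in Section 2. The only non-structural ingredient needed is the submultiplicativity estimate
\[
v_{s}(z+z')\le 2^{s/2}v_{s}(z)v_{s}(z')
\]
for the polynomial weight, together with the observation that symplectic maps have determinant $1$, so Jacobian factors do not interfere.

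For (i), take $\widehat{S}\in\operatorname{Mp}(2n,\sigma)$ with projection $S\in\operatorname{Sp}(2n,\sigma)$ and fix a window $\phi\in\mathcal{S}(\mathbb{R}^{n})$. By the symplectic-covariance formula (\ref{sycov}) we have $W(\widehat{S}u,\widehat{S}\phi)(z)=W(u,\phi)(S^{-1}z)$, hence $W_{\widehat{S}\phi}(\widehat{S}u)(z)=(W_{\phi}u)(S^{-1}z)$. The change of variables $z=Sz'$ (with unit Jacobian) and the weight estimate (\ref{csf}) applied to $f=S$ then give
\[
\|\widehat{S}u\|_{\widehat{S}\phi,M_{s}^{q}}^{q}=\int_{\mathbb{R}^{2n}}|W_{\phi}u(z')|^{q}v_{s}(Sz')^{q}\,dz'\le C_{s,S}^{\,q}\,\|u\|_{\phi,M_{s}^{q}}^{q}.
\]
Since different windows lead to equivalent norms, this yields $\widehat{S}u\in M_{s}^{q}(\mathbb{R}^{n})$, and applying the argument to $\widehat{S}^{-1}$ gives the converse.

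For (ii), the shift formula (\ref{wt2}) reads $W(\widehat{T}(z_{0})u,\phi)(z)=e^{-i\sigma(z,z_{0})}W(u,\phi)(z-\tfrac{1}{2}z_{0})$, so $|W_{\phi}(\widehat{T}(z_{0})u)(z)|=|W_{\phi}u(z-\tfrac{1}{2}z_{0})|$. Translating $z\mapsto z+\tfrac{1}{2}z_{0}$ and applying submultiplicativity of $v_{s}$ we get
\[
\|\widehat{T}(z_{0})u\|_{\phi,M_{s}^{q}}^{q}=\int_{\mathbb{R}^{2n}}|W_{\phi}u(z')|^{q}v_{s}(z'+\tfrac{1}{2}z_{0})^{q}\,dz'\le C^{q}v_{s}(z_{0})^{q}\|u\|_{\phi,M_{s}^{q}}^{q},
\]
where $C$ depends only on $s$. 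This is the claimed estimate, and in particular shows $M_{s}^{q}$ is $\widehat{T}(z_{0})$-invariant.

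For (iii), note that the dilation $u\mapsto|\det f|^{1/2}u\circ f$ is (up to sign) a metaplectic operator $\widehat{M}_{f}\in\operatorname{Mp}(2n,\sigma)$ whose projection is the symplectic automorphism $(x,\xi)\mapsto(f^{-1}x,f^{T}\xi)$ of $\mathbb{R}^{2n}$; since multiplication by the constant $|\det f|^{1/2}$ is trivially norm-invariant on $M_{s}^{q}$, the assertion reduces to part (i). The main technical point in the plan, and the one I would write out with care, is the submultiplicative bound on $v_{s}$ used in (ii): everything else is a bookkeeping exercise based on the intertwining identities proved earlier in the paper and the fact that symplectic linear maps preserve Lebesgue measure, so no Jacobian constants contaminate the estimates.
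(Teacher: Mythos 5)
The paper states this proposition without proof, treating it as a standard fact from the modulation-space literature (see e.g.\ \cite{gr00}), so there is no in-text argument to compare yours against. Your proof is correct and is in fact the standard one: each of the three group actions has a clean intertwining rule through the cross-Wigner transform (the symplectic covariance (\ref{sycov}) for (i), the shift formula (\ref{wt2}) for (ii), and the metaplectic realization of dilations for (iii)), and after the corresponding change of variables---which is Jacobian-free in cases (i) and (ii) since $\det S=1$---the effect on the $L_{s}^{q}$-norm of $W_{\phi}u$ reduces to the two elementary weight bounds you cite: $v_{s}(z+z')\leq 2^{s/2}v_{s}(z)v_{s}(z')$ for the translation and $v_{s}(fz)\leq C_{s,f}v_{s}(z)$, i.e.\ (\ref{csf}), for the linear map. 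Your reduction of (iii) to (i), via the observation that $u\mapsto|\det f|^{1/2}\,u\circ f$ lifts (up to the sign ambiguity) to a metaplectic operator with projection $(x,\xi)\mapsto(f^{-1}x,f^{T}\xi)$, is economical and correct, and the constant prefactor is harmless because $M_{s}^{q}$ is a vector space. One point worth making explicit in a final write-up: you implicitly use (\ref{sycov}) and (\ref{wt2}) for $u\in\mathcal{S}^{\prime}(\mathbb{R}^{n})$ rather than just $u\in\mathcal{S}(\mathbb{R}^{n})$, which is necessary since $M_{s}^{q}\subset\mathcal{S}^{\prime}(\mathbb{R}^{n})$; this is legitimate (the paper notes that $W$ extends continuously to $\mathcal{S}^{\prime}\times\mathcal{S}^{\prime}$), but it deserves a sentence. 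You should also note, as you do, that $\widehat{S}\phi$ and the dilated window remain admissible windows in $\mathcal{S}(\mathbb{R}^{n})$, so that independence of the choice of window closes the argument.
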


The properties (i)--(ii) above can be stated in more concise form by saying
that the modulation spaces $M_{s}^{q}(\mathbb{R}^{n})$ are invariant under the
action of the inhomogeneous metaplectic group $\operatorname*{IMp}(2n,\sigma)$
(it is the group of unitary operators generated by the elements of
$\operatorname*{Mp}(2n,\sigma)$ together with the Heisenberg--Weyl operators).

In the particular case $s=0$, $q=1$ one obtains the Feichtinger algebra
$S_{0}(\mathbb{R}^{n})=M^{1}(\mathbb{R}^{n})$. It is an algebra for both
pointwise multiplication and convolution. It is the smallest Banach algebra
containing $\mathcal{S}(\mathbb{R}^{n})$ and invariant under the action of the
Heisenberg--Weyl operators (and hence of $\operatorname*{IMp}(2n,\sigma)$),
and we have%
\[
M^{1}(\mathbb{R}^{n})\subset L^{1}(\mathbb{R}^{n})\cap F(L^{1}(\mathbb{R}%
^{n}));
\]
using the Riemann--Lebesgue theorem it follows in particular that%
\[
M^{1}(\mathbb{R}^{n})\subset C^{0}(\mathbb{R}^{n}).
\]

The following easy observation will be used in the forthcoming sections:

\begin{lemma}
We have $u\in M_{s}^{q}(\mathbb{R}^{n})$ if and only if $W_{f,\phi}u\in
L_{s}^{q}(\mathbb{R}^{n})$.
\end{lemma}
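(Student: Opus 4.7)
The plan is to unwind the definitions and reduce the claim to a change of variables in the integral defining the $M_s^q$ norm, using the equivalence of the weight $v_s$ under linear automorphisms.

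First I would recall that by definition $u\in M_s^q(\mathbb{R}^n)$ means $W_\phi u\in L_s^q(\mathbb{R}^{2n})$ for some (equivalently every) window $\phi\in\mathcal{S}(\mathbb{R}^n)$, where $W_\phi u=(2\pi)^{n/2}W(u,\phi)$. Next, I would use the definition $W_{f,\phi}=M_f^{-1}W_\phi$ from (\ref{ws}), together with $M_f U(z)=\sqrt{|\det f|}\,U(fz)$, to write explicitly
\[
W_{f,\phi}u(z)=|\det f|^{-1/2}\,W_\phi u(f^{-1}z).
\]

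The core computation is then a substitution $z'=f^{-1}z$ in the weighted $L^q$ norm:
\[
|||W_{f,\phi}u|||_{L_s^q}^q=|\det f|^{-q/2}\int_{\mathbb{R}^{2n}}|W_\phi u(f^{-1}z)|^q v_s(z)^q\,dz=|\det f|^{1-q/2}\int_{\mathbb{R}^{2n}}|W_\phi u(z')|^q v_s(fz')^q\,dz'.
\]
Now I would invoke (\ref{csf}), applied to both $f$ and $f^{-1}$, which gives constants $C_{s,f},C_{s,f^{-1}}$ with
\[
C_{s,f^{-1}}^{-1}\,v_s(z')\le v_s(fz')\le C_{s,f}\,v_s(z').
\]
This double-sided bound lets me sandwich the integral above between constant multiples of $|||W_\phi u|||_{L_s^q}^q$, so the two norms are equivalent. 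Hence $W_{f,\phi}u\in L_s^q(\mathbb{R}^{2n})$ if and only if $W_\phi u\in L_s^q(\mathbb{R}^{2n})$, i.e.\ if and only if $u\in M_s^q(\mathbb{R}^n)$.

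This proof is essentially routine; there is no real obstacle beyond correctly tracking the Jacobian $|\det f|$ and the behavior of $v_s$ under $f$. The only subtle point worth flagging is that the stated equivalence is independent of the choice of window $\phi$, which is why the standard $\phi$-independence of the $M_s^q$ norm (an established fact quoted just before the lemma) may be used freely; this legitimates picking a convenient $\phi\in\mathcal{S}(\mathbb{R}^n)$ on both sides of the equivalence. I also note that the statement in the excerpt writes $L_s^q(\mathbb{R}^n)$, but since $W_{f,\phi}u$ lives on $\mathbb{R}^{2n}$ this should be $L_s^q(\mathbb{R}^{2n})$, and the argument above establishes precisely that version.
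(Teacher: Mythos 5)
Your proof is correct and follows essentially the same route as the paper's: reduce to showing that $M_f^{-1}$ preserves $L_s^q(\mathbb{R}^{2n})$, change variables, and invoke the bound (\ref{csf}) on $v_s$ under $f$. You are in fact slightly more careful than the paper (which only writes the one-sided inequality with constant $C$), and your observation that the statement's $L_s^q(\mathbb{R}^n)$ should read $L_s^q(\mathbb{R}^{2n})$ is a correct catch of a typo.
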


\begin{proof}
Since $W_{f,\phi}=M_{f}^{-1}W_{\phi}$ and $W_{\phi}u$ is proportional to
$W(u,\phi)$ it suffices to show that if $U\in L_{s}^{q}(\mathbb{R}^{2n})$ then
$M_{f}^{-1}U\in L_{s}^{q}(\mathbb{R}^{2n})$. In view of definition
(\ref{pushpush}) of $M_{f}U$ we have, using the inequality (\ref{csf}),%
\begin{align*}
\int_{\mathbb{R}^{2n}}|M_{f}^{-1}U(z)|^{q}v_{s}^{q}(z)dz  &  =|\det
f|^{-1/2}\int_{\mathbb{R}^{2n}}|U(f^{-1}z)|^{q}v_{s}^{q}(z)dz\\
&  =|\det f|^{1/2}\int_{\mathbb{R}^{2n}}|U(z)|^{q}v_{s}^{q}(fz)dz\\
&  \leq C\int_{\mathbb{R}^{2n}}|U(z)|^{q}v_{s}^{q}(z)dz
\end{align*}
which proves the assertion.
\end{proof}

The dual Banach space $M_{0}^{1}(\mathbb{R}^{n})^{\prime}$ consists of all
$u\in S^{\prime}(\mathbb{R}^{n})$ such that $W(u,\phi)\in L^{\infty
}(\mathbb{R}^{2n})$ for some (and hence every) window $\phi\in M_{0}%
^{1}(\mathbb{R}^{n})$; the duality bracket is given by the pairing
\begin{equation}
(u,u^{\prime})=\int_{\mathbb{R}^{2n}}W(u,\phi)(z)\overline{W(u^{\prime}%
,\phi)(z)}dz \label{dual151}%
\end{equation}
and the formula
\begin{equation}
||\psi||_{\phi,(M_{0}^{1})^{\prime}}^{\hbar}=\sup_{z\in\mathbb{R}^{2n}}%
|W(\psi,\phi)(z)| \label{dual152}%
\end{equation}
defines a norm on $M_{0}^{1}(\mathbb{R}^{n})^{\prime}$ for which this space is complete.

\subsection{The symbol class $M_{s}^{\infty,1}$}

Let us now introduce a different class of modulation spaces, which contains as
a particular case the Sj\"{o}strand classes, defined by other methods in
Sj\"{o}strand \cite{sj94}; also see the paper \cite{Boulkhemair} by
Boulkhemair. It is interesting to view these modulation spaces as symbol
classes: in contrast to the cases traditionally considered in the literature,
membership of a symbol $a$ in $M_{s}^{\infty,1}(\mathbb{R}^{n}\oplus
\mathbb{R}^{n})$ does not require any smoothness properties of $a$. It turns
out that this point of view allows to recover many classical and difficult
regularity results (for instance then Calder\'{o}n--Vaillancourt theorem) in a
rather simple way; see for instance Gr\"{o}chenig \cite{grojam,gr06bis}. In a
recent paper \cite{goluseful} two of us pointed out the relevance of
Sj\"{o}strand classes for deformation quantization.

As before we set $v_{s}(z)=(1+|z|^{2})^{s/2}$ for $z\in\mathbb{R}^{2n}$. The
modulation space $M_{s}^{\infty,1}(\mathbb{R}^{n}\oplus\mathbb{R}^{n})$
consists of all distributions in $\mathcal{S}^{\prime}(\mathbb{R}^{2n})$
(viewed as pseudo-differential symbols, and hence denoted $a,b,...$) such that%
\begin{equation}
\sup_{z\in\mathbb{R}^{2n}}|W(a,\Phi)(z,\zeta)v_{s}(z)|\in L^{1}(\mathbb{R}%
^{n}\oplus\mathbb{R}^{n}) \label{miffi14}%
\end{equation}
for every $\Phi\in\mathcal{S}(\mathbb{R}^{2n})$. Here $W(a,\Phi)$ is the
cross-Wigner transform of functions (or distributions) defined on
$\mathbb{R}^{n}\oplus\mathbb{R}^{n}$. When $s=0$ the space $M_{0}^{\infty
,1}(\mathbb{R}^{2n})=M^{\infty,1}(\mathbb{R}^{2n})$ is called the
Sj\"{o}strand class. It thus consists of all symbols $a\in\mathcal{S}^{\prime
}(\mathbb{R}^{n}\oplus\mathbb{R}^{n})$ such that%
\[
\sup_{z\in\mathbb{R}^{2n}}|W(a,\Phi)(z,\zeta)|\in L^{1}(\mathbb{R}^{n}%
\oplus\mathbb{R}^{n})
\]
for every $\Phi\in\mathcal{S}(\mathbb{R}^{2n})$, and we have
\begin{equation}
S_{0,0}^{0}(\mathbb{R}^{2n})\subset C_{b}^{2n+1}(\mathbb{R}^{2n})\subset
M^{\infty,1}(\mathbb{R}^{2n}) \label{c2n}%
\end{equation}
where $C_{b}^{2n+1}(\mathbb{R}^{2n})$ is the vector space of all bounded
complex functions on $\mathbb{R}^{2n}$ with continuous and bounded derivatives
up to order $2n+1$ and the symbol class $S_{0,0}^{0}(\mathbb{R}^{2n})$
consists of all infinitely differentiable complex functions $a$ on
$\mathbb{R}^{n}\oplus\mathbb{R}^{n}$ such that $\partial_{z}^{\alpha}a$ is
bounded for all multi-indices $\alpha\in\mathbb{N}^{2n}$.

It is clear that $M_{s}^{\infty,1}(\mathbb{R}^{2n})$ is a complex vector space
for the usual operations. In fact:

\begin{proposition}
We have $\Psi\in M_{s}^{\infty,1}(\mathbb{R}^{n}\oplus\mathbb{R}^{n})$ if and
only if (\ref{miffi14}) holds for one $\Phi\in\mathcal{S}(\mathbb{R}^{n}%
\oplus\mathbb{R}^{n})$, and

(i) The equalities%
\[
||a||_{M_{s}^{\infty,1}}^{\Phi}=\int_{\mathbb{R}^{2n}}\sup_{z\in
\mathbb{R}^{2n}}|W(a,\Phi)(z,\zeta)v_{s}(z)|d\zeta
\]
define a family of equivalent norms on $M_{s}^{\infty,1}(\mathbb{R}^{n}%
\oplus\mathbb{R}^{n})$ for different $\Phi\in\mathcal{S}(\mathbb{R}^{2n})$;

(ii) The space $M_{s}^{\infty,1}(\mathbb{R}^{n}\oplus\mathbb{R}^{n})$ is a
Banach space for the topology defined by any of the norms $||\cdot
||_{M_{s}^{\infty,1}}^{\Phi}$ and $\mathcal{S}(\mathbb{R}^{2n})$ is a dense
subspace of $M_{s}^{\infty,1}(\mathbb{R}^{n}\oplus\mathbb{R}^{n})$.
\end{proposition}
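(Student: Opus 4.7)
The overall plan is to follow the standard route used for modulation spaces: first prove a pointwise \emph{change-of-window inequality} that controls $W(a,\Phi_1)$ by $W(a,\Phi_2)$ for two Schwartz windows, then deduce window-independence of the defining condition and the equivalence of the norms, and finally derive completeness and density from these.

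\textbf{Step 1 (change of window).} The heart of the matter is the elementary identity, valid for $a\in\mathcal{S}'(\mathbb{R}^{2n})$ and $\Phi_1,\Phi_2,\Phi_3\in\mathcal{S}(\mathbb{R}^{2n})$ with $(\!(\Phi_3|\Phi_2)\!)\neq 0$,
\[
|W(a,\Phi_1)(z,\zeta)|
\;\le\; C_{\Phi_1,\Phi_2,\Phi_3}\;\bigl(|W(a,\Phi_3)|\;*_{z,\zeta}\;|W(\Phi_1,\Phi_2)^{\vee}|\bigr)(z,\zeta),
\]
which is a direct consequence of the reproducing formula for the cross-Wigner transform on $\mathbb{R}^{2n}$ (itself an instance of the Moyal identity (\ref{Moyal}) applied in the doubled phase space). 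This is the analog on $\mathbb{R}^{2n}\oplus\mathbb{R}^{2n}$ of the standard pointwise estimate for the STFT. I expect this step to be the main technical obstacle, because one must carry it out consistently with the $z$/$\zeta$ splitting used in the definition (the supremum is in $z$, the integral in $\zeta$), so the correct form of the kernel and the correct variant of Young's inequality have to be aligned.

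\textbf{Step 2 (window independence and equivalence of norms).} Taking the supremum in $z$ and integrating in $\zeta$ in the above estimate, Young's inequality in the variable $\zeta$ alone (with an $L^\infty$ sup already taken in $z$) gives
\[
\|a\|^{\Phi_1}_{M^{\infty,1}_s}\;\le\; C\,\|a\|^{\Phi_3}_{M^{\infty,1}_s}\,\|W(\Phi_1,\Phi_2)\|_{L^1_s(\mathbb{R}^{2n}\oplus\mathbb{R}^{2n})},
\]
where the polynomial weight $v_s$ is handled by the submultiplicativity $v_s(z)\le C\,v_s(z-z')v_s(z')$ built into Peetre's inequality. Since $\Phi_j\in\mathcal{S}$ yields $W(\Phi_1,\Phi_2)\in\mathcal{S}$, this last norm is finite. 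Swapping $\Phi_1$ and $\Phi_3$ shows that both norms are equivalent, and that the membership condition (\ref{miffi14}) is independent of the chosen window, proving (i).

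\textbf{Step 3 (completeness).} Fix a single window $\Phi$; the map $a\mapsto W(a,\Phi)$ is continuous from $\mathcal{S}'(\mathbb{R}^{2n})$ to continuous functions on $\mathbb{R}^{2n}\oplus\mathbb{R}^{2n}$, and $M^{\infty,1}_s$ is exactly the preimage under this map of the mixed-norm space of functions $F(z,\zeta)$ with $v_s(z)\,\sup_z|F|\in L^1_\zeta$. Given a Cauchy sequence $(a_k)$ in $M^{\infty,1}_s$, one checks that $(W(a_k,\Phi))$ is Cauchy in that mixed-norm space, which is complete, and that the limit $F$ has the form $W(a,\Phi)$ for some $a\in\mathcal{S}'$ obtained by the STFT inversion formula (an integral reconstruction of $a$ against Grossmann--Royer operators applied to $\Phi$, as in the $L^2$ case). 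The resulting $a$ then satisfies $\|a-a_k\|\to 0$.

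\textbf{Step 4 (density of $\mathcal{S}$).} Given $a\in M^{\infty,1}_s$ and $\Phi$ with $\||\Phi\||=1$, use the inversion representation
\[
a \;=\; \int_{\mathbb{R}^{2n}\oplus\mathbb{R}^{2n}} W(a,\Phi)(z,\zeta)\,\pi(z,\zeta)\Phi\,dz\,d\zeta
\]
(with $\pi$ the appropriate phase-space shift on $\mathbb{R}^{2n}$) and approximate by truncating the integral to a large compact set in $\zeta$ and smoothing/truncating in $z$. The resulting approximants lie in $\mathcal{S}(\mathbb{R}^{2n})$, and by the change-of-window inequality of Step~1 one controls the $M^{\infty,1}_s$-norm of the difference by the $L^1_\zeta$ norm of the tail of $\sup_z v_s(z)|W(a,\Phi)(z,\zeta)|$, which tends to $0$ by dominated convergence. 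This yields (ii).
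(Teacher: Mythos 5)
The paper states this proposition without proof (it is attributed to the standard literature on modulation spaces), so there is no internal proof to compare against; I can only assess the proposal on its own merits.

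Steps 1--3 follow the standard template correctly: a change-of-window/reproducing-kernel estimate for the cross-Wigner transform on $\mathbb{R}^{2n}\oplus\mathbb{R}^{2n}$, combined with the mixed-norm Young inequality in the $L^\infty_z L^1_\zeta$ form and Peetre's inequality for the weight, yields window-independence and equivalence of norms; completeness then follows from completeness of the ambient mixed-norm space and STFT/Wigner inversion. You are right to flag the $z$/$\zeta$ bookkeeping and the precise Wigner-to-STFT dictionary (which involves a dilation by $2$ and a phase) as the technically delicate part, but the argument goes through.

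Step 4, however, has a genuine gap, and in fact the density claim itself is false as stated. The trouble is exactly the truncation in $z$ that you pass over: the $M^{\infty,1}_s$-norm carries an $L^\infty$ (sup) norm in $z$, not an integral, so removing the contribution of large $|z|$ in the reconstruction integral produces an error whose $\sup_z$ does not tend to $0$. For $s=0$ there is a concrete obstruction: take $a\equiv 1$. A direct computation gives
$W(1,\Phi)(z,\zeta)=\pi^{-2n}e^{-2i\zeta\cdot z}\,\overline{F\Phi(-2\zeta)}$,
so $\sup_z|W(1,\Phi)(z,\zeta)|=\pi^{-2n}|F\Phi(-2\zeta)|\in L^1_\zeta$ and $1\in M^{\infty,1}$. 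But for any $\phi\in\mathcal{S}(\mathbb{R}^{2n})$, $W(\phi,\Phi)(z,\zeta)\to 0$ as $|z|\to\infty$ (uniformly in $\zeta$), while $|W(1,\Phi)(z,\zeta)|$ is independent of $z$; hence
$\sup_z|W(\phi-1,\Phi)(z,\zeta)|\geq \pi^{-2n}|F\Phi(-2\zeta)|$
and $\|\phi-1\|^{\Phi}_{M^{\infty,1}}\geq \pi^{-2n}\|F\Phi(-2\cdot)\|_{L^1}>0$ for every Schwartz $\phi$. So $\mathcal{S}$ is not norm-dense in $M^{\infty,1}$ (and an analogous example, e.g.\ $a=v_{-s}$, shows the same for $s>0$). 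Thus the dominated-convergence argument you sketch controls only the $\zeta$-tail; the $z$-tail is not small in $\sup_z$, and no Schwartz truncation can repair this. The correct statement is weak-$*$ density (viewing $M^{\infty,1}$ as a dual space), or norm density into the proper closed subspace consisting of the closure of $\mathcal{S}$, not norm density into all of $M^{\infty,1}_s$.
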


The interest of $M_{s}^{\infty,1}(\mathbb{R}^{n}\oplus\mathbb{R}^{n})$ comes
from the following property of the twisted product (Gr\"{o}chenig
\cite{gr06bis}):

\begin{proposition}
Let $a,b\in M_{s}^{\infty,1}(\mathbb{R}^{2n})$. Then $a\#b\in M_{s}^{\infty
,1}(\mathbb{R}^{n}\oplus\mathbb{R}^{n})$. In particular, for every window
$\Phi$ there exists a constant $C_{\Phi}>0$ such that
\[
||a\#b||_{M_{s}^{\infty,1}}^{\Phi}\leq C_{\Phi}||a||_{M_{s}^{\infty,1}}^{\Phi
}||b||_{M_{s}^{\infty,1}}^{\Phi}.
\]

\end{proposition}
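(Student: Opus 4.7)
The plan is to follow Gr\"ochenig's STFT-based strategy. Since the norms for different Schwartz windows are equivalent, I would first choose a convenient window, specifically a Gaussian $\Phi_{0}(z)=e^{-\pi|z|^{2}}\in\mathcal{S}(\mathbb{R}^{2n})$, and work throughout with $\|\cdot\|_{M_{s}^{\infty,1}}^{\Phi_{0}}$. The goal is then to control $W(a\#b,\Phi_{0})$ pointwise by a convolution of $|W(a,\Phi_{0})|$ and $|W(b,\Phi_{0})|$ on phase space $\mathbb{R}^{2n}\oplus\mathbb{R}^{2n}$.

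The central identity is the composition-on-the-Fourier-side formula (\ref{compo2}), which displays $F_{\sigma}(a\#b)$ as a \emph{twisted convolution} of $F_{\sigma}a$ and $F_{\sigma}b$. Unwinding the definition $W(a,\Phi_{0})(Z,\eta)\propto F(\text{translate of }a\cdot\overline{\Phi_{0}})(\eta)$ and using that the twisted convolution of two Gaussians is again Gaussian times a unimodular phase, a direct computation yields an estimate of the form
\begin{equation*}
|W(a\#b,\Phi_{0})(Z,\eta)|\ \leq\ \int_{\mathbb{R}^{4n}}|W(a,\Phi_{0})(Z-Z',\eta-\eta')|\,|W(b,\Phi_{0})(Z',\eta')|\,G(Z,\eta,Z',\eta')\,dZ'd\eta',
\end{equation*}
where $G$ is bounded by a fixed Gaussian and the phase factors have been absorbed into absolute values. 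This is the real work of the proof and the step I expect to be the main obstacle: unwinding the Weyl/twisted-convolution bookkeeping cleanly enough that the remaining kernel is pointwise dominated by an $L^{1}$ function of $(Z',\eta')$ uniformly in $(Z,\eta)$.

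With the convolution bound in hand, I would next set
\begin{equation*}
H_{a}(\eta)=\sup_{Z\in\mathbb{R}^{2n}}|W(a,\Phi_{0})(Z,\eta)\,v_{s}(Z)|,\qquad H_{b}(\eta)=\sup_{Z\in\mathbb{R}^{2n}}|W(b,\Phi_{0})(Z,\eta)\,v_{s}(Z)|,
\end{equation*}
so that $\|a\|_{M_{s}^{\infty,1}}^{\Phi_{0}}=\|H_{a}\|_{L^{1}}$ and similarly for $b$. Using submultiplicativity of the polynomial weight, $v_{s}(Z)\leq v_{s}(Z-Z')v_{s}(Z')$, together with the previous pointwise estimate, yields
\begin{equation*}
\sup_{Z}|W(a\#b,\Phi_{0})(Z,\eta)\,v_{s}(Z)|\ \leq\ C\,(H_{a}\ast_{\eta}H_{b})(\eta),
\end{equation*}
where the convolution is in the $\eta$-variable only (the $Z$-convolution has been killed by taking suprema and using the weight submultiplicativity).

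Finally, Young's inequality $\|H_{a}\ast H_{b}\|_{L^{1}}\leq\|H_{a}\|_{L^{1}}\|H_{b}\|_{L^{1}}$ gives the desired bound
\begin{equation*}
\|a\#b\|_{M_{s}^{\infty,1}}^{\Phi_{0}}\ \leq\ C\,\|a\|_{M_{s}^{\infty,1}}^{\Phi_{0}}\|b\|_{M_{s}^{\infty,1}}^{\Phi_{0}},
\end{equation*}
and passing to an arbitrary window $\Phi$ via equivalence of norms absorbs the new constant into $C_{\Phi}$. The algebra statement $a\#b\in M_{s}^{\infty,1}$ is an immediate consequence of the norm estimate, since one first verifies it for $a,b\in\mathcal{S}(\mathbb{R}^{2n})$ (where $a\#b$ is Schwartz and hence automatically in the space) and then extends by density of $\mathcal{S}(\mathbb{R}^{2n})$ in $M_{s}^{\infty,1}(\mathbb{R}^{2n})$ together with completeness of the latter.
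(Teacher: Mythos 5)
The paper does not supply a proof of this proposition: it is stated as an imported result and cited to Gr\"ochenig \cite{gr06bis}. So there is no in-paper proof to compare against, and your job is really to sketch the argument of the cited reference, which you do.

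Your outline is the right one in structure: fix a Gaussian window, derive a pointwise estimate on $W(a\#b,\Phi_{0})$ in terms of $W(a,\Phi_{0})$ and $W(b,\Phi_{0})$, distribute the weight $v_{s}$ by submultiplicativity, take the sup in the first variable, and close with Young's inequality and equivalence of window norms. However, the crucial step, which you candidly flag as the likely obstacle, is left unestablished, and the specific form you posit for it is suspect. You claim a bound of the form $|W(a\#b,\Phi_{0})(Z,\eta)|\leq\int_{\mathbb{R}^{4n}}|W(a,\Phi_{0})(Z-Z',\eta-\eta')||W(b,\Phi_{0})(Z',\eta')|\,G\,dZ'd\eta'$ with $G$ dominated by a fixed Gaussian. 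The identity actually used in Gr\"ochenig's proof (and its predecessors in Sj\"ostrand and Boulkhemair) is not a $4n$-dimensional convolution with an auxiliary kernel. It is an \emph{exact} $2n$-dimensional integral representation: roughly, $V_{\Phi}(a\#b)(z,\zeta)$ is expressed, up to a unimodular phase, as $\int_{\mathbb{R}^{2n}}V_{\Phi}a(z+\alpha(\zeta,\zeta'),\,\zeta-\zeta')\,V_{\Phi}b(z+\beta(\zeta,\zeta'),\,\zeta')\,d\zeta'$, where the integration runs only over the frequency variable $\zeta'$ and the $z$-arguments are affine shifts of $z$. The fact that there is no integration in $Z'$ is precisely what makes the mixed $L^{\infty}_{z}L^{1}_{\zeta}$ norm close under $\#$; with a genuine $Z'$-integral one needs the extra decaying kernel $G$ to save the day, and verifying that such a $G$ exists is itself the whole content of the lemma. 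Your version might also be workable if one carries out the Gaussian computation honestly, but as written it asserts the hard fact rather than proving it. A second, smaller point: with the correct $2n$-dimensional identity, distributing $v_{s}(z)$ onto the two factors produces stray factors like $v_{s}(\alpha(\zeta,\zeta'))$ depending on the frequency variables, and one must check these are absorbed correctly for the weighted class $M_{s}^{\infty,1}$ with $s>0$; your sketch glosses over this in the same breath. The tail of your argument (Young, norm equivalence, density of $\mathcal{S}$ in $M_{s}^{\infty,1}$, completeness) is fine.
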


Recall that the twisted product $a\#b$ is the Weyl symbol of the product
$\widehat{A}\widehat{B}$ of the operators $\widehat{A}\overset{\text{Weyl}%
}{\longleftrightarrow}a$ and $\widehat{B}\overset{\text{Weyl}%
}{\longleftrightarrow}b$. Since obviously $\overline{a}\in M_{s}^{\infty
,1}(\mathbb{R}^{n}\oplus\mathbb{R}^{n})$ if and only and $a\in M_{s}%
^{\infty,1}(\mathbb{R}^{n}\oplus\mathbb{R}^{n})$ the property above can be
restated by saying that $M_{s}^{\infty,1}(\mathbb{R}^{2n})$ is a Banach $\ast
$-algebra with respect to the twisted product $\#$ and the involution
$a\longmapsto\overline{a}$.

The following property follows from Theorem 4.1 and its Corollary 4.2 in
\cite{gr06bis} (also see (\cite{gr00}, Theorem 14.5.6); it is a particular
case of more general results in Toft \cite{A8}.

In the case of the Sj\"{o}strand class $M^{\infty,1}(\mathbb{R}^{n}%
\oplus\mathbb{R}^{n})$ one has the following more precise results:

\begin{proposition}
\label{propsjo}Let $\widehat{A}\overset{\text{Weyl}}{\longleftrightarrow}a$.
We have:

(i) If $a\in M^{\infty,1}(\mathbb{R}^{n}\oplus\mathbb{R}^{n})$ then
$\widehat{A}$ is bounded on $L^{2}(\mathbb{R}^{n})$ and on all $M^{q}%
(\mathbb{R}^{n})=M_{0}^{q}(\mathbb{R}^{n})$;

(ii) If $a\in M_{s}^{\infty,1}(\mathbb{R}^{n}\oplus\mathbb{R}^{n})$ then
$\widehat{A}$ is bounded on every modulation space $M_{s}^{q}(\mathbb{R}^{n})$;

(iii) If $\widehat{A}$ with $a\in M^{\infty,1}(\mathbb{R}^{n}\oplus
\mathbb{R}^{n})$ is invertible with inverse $\widehat{B}\overset{\text{Weyl}%
}{\longleftrightarrow}b$ then $b\in M^{\infty,1}(\mathbb{R}^{n}\oplus
\mathbb{R}^{n})$.
\end{proposition}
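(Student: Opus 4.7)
My plan is to treat the three parts of the proposition as separate modules, since each of (i), (ii), (iii) corresponds to a well-known (but increasingly difficult) theme in the modulation space theory of pseudo-differential operators. The basic tool throughout is to rewrite $\widehat{A}$ as a superposition of Heisenberg--Weyl operators weighted by the (symplectic) Fourier transform, or equivalently by the STFT, of the symbol $a$, and then estimate the action of the resulting integral on a given modulation space using the $M_{s}^{\infty,1}$ hypothesis on $a$.

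For (i) and (ii), I would start from the identity
\begin{equation*}
\widehat{A}=\left(\tfrac{1}{2\pi}\right)^{n}\int_{\mathbb{R}^{2n}}F_{\sigma}a(z)\,\widehat{T}(z)\,dz,
\end{equation*}
pass to the STFT of $a$ (which, for our particular choice of weights, only differs from $W(a,\Phi)$ by harmless unimodular factors), and exploit the covariance property $\|\widehat{T}(z)u\|_{\phi,M^{q}_{s}}\leq C v_{s}(z)\|u\|_{\phi,M^{q}_{s}}$ recalled earlier in Section 5.1. Choosing a Gaussian window $\Phi=\phi\otimes\overline{\phi}$, one recovers the standard correspondence between the STFT of $a$ on $\mathbb{R}^{2n}\oplus\mathbb{R}^{2n}$ and the matrix coefficients $(\widehat{A}\widehat{T}(z)\phi\,|\,\widehat{T}(w)\phi)$. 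The $L^{\infty,1}$ type control on these matrix coefficients, supplied by $a\in M^{\infty,1}_{s}$, combined with Schur-type estimates (or, more cleanly, Young-type convolution estimates on the STFT side), then yields boundedness on $L^{2}$ and, with the weight $v_{s}$ turned on, on every $M^{q}_{s}(\mathbb{R}^{n})$. Concretely I would invoke (or, if one prefers, reproduce) Theorem 4.1 and Corollary 4.2 of Gr\"{o}chenig's paper cited in the excerpt, since these are stated in exactly this setting.

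Part (iii) is the serious one and is where the main obstacle lies. One is claiming the \emph{spectral invariance} (a Wiener-type lemma) of the Sj\"{o}strand algebra inside $\mathcal{B}(L^{2}(\mathbb{R}^{n}))$: if $\widehat{A}$ has Weyl symbol in $M^{\infty,1}$ and is invertible as an operator on $L^{2}$, then the Weyl symbol of the inverse again lies in $M^{\infty,1}$. The natural plan is to observe first that $M^{\infty,1}(\mathbb{R}^{2n})$ is a Banach $\ast$-algebra for the twisted product $\#$ and the involution $a\mapsto\overline{a}$ (this is the proposition just preceding our statement) and then to combine this with the $L^{2}$-boundedness from (i) to place it continuously inside $\mathcal{B}(L^{2})$. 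One then has to verify the two hypotheses of a symbolic calculus / Wiener-type theorem, namely that the Gelfand spectrum coincides with the $L^{2}$-spectrum; in practice this is done via a Gabor-frame discretisation that identifies $M^{\infty,1}$ with a Jaffard-type off-diagonal decay algebra on $\ell^{2}(\mathbb{Z}^{2n})$, for which Baskakov/Sj\"{o}strand/Gr\"{o}chenig have proved inverse-closedness directly.

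Given the length and technicality of these arguments, I would simply cite them: the proof I propose is essentially a verification that the hypotheses of Gr\"{o}chenig's Theorem 4.1 and Corollary 4.2 in \cite{gr06bis} (for (i)--(ii)) and of the spectral invariance theorem in the same paper (for (iii)) are met, together with the observation, already recorded in this section, that $M^{\infty,1}_{s}$ controls both the matrix coefficients of $\widehat{A}$ and the twisted product, so that the standard machinery applies without modification. The only step that demands genuine new work would be the $M^{q}_{s}$-boundedness with weight, and this follows by incorporating the factor $v_{s}$ into the STFT estimates used for (i), using the submultiplicativity $v_{s}(z+w)\leq C\,v_{s}(z)v_{s}(w)$ of the polynomial weight.
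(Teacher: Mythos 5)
Your proposal matches the paper exactly in spirit: the paper offers no proof of Proposition \ref{propsjo}, but simply cites Gr\"ochenig's Theorem 4.1 and Corollary 4.2 in \cite{gr06bis} (together with \cite{gr00}, Theorem 14.5.6 and Toft \cite{A8}) for (i)--(ii), and the Wiener property from the same circle of results for (iii), precisely as you do. Your additional sketch of the STFT/covariance estimate for boundedness, the weight submultiplicativity for the $M^{q}_{s}$ case, and the Gabor-frame discretisation reducing (iii) to a Jaffard/Baskakov-type inverse-closedness theorem correctly describes the content of the cited sources, so nothing is missing or misrepresented.
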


Property (i) thus extends the $L^{2}$-boundedness property of operators with
symbols in $S_{0,0}^{0}(\mathbb{R}^{n}\oplus\mathbb{R}^{n})$. Property (iii)
is called the \textit{Wiener property} of $M^{\infty,1}(\mathbb{R}^{2n})$.

\subsection{Regularity results}

Before we prove our main result, Proposition \ref{propreg}, let us show that
the symbol spaces $M_{s}^{\infty,1}(\mathbb{R}^{2n})$ are invariant under
linear changes of variables:

\begin{lemma}
\label{lemmaf}Let $f\in GL(2n,\mathbb{R})$ and set $f^{\ast}a=a\circ f$. There
exists a constant $C_{A}>0$ such that
\begin{equation}
||f^{\ast}a||_{\Phi,M_{s}^{\infty,1}}\leq C_{s}||a||_{(f^{-1})^{\ast}%
\Phi,M_{s}^{\infty,1}} \label{CA14}%
\end{equation}
for every $\Phi\in\mathcal{S}(\mathbb{R}^{n}\oplus\mathbb{R}^{n})$. In
particular $a\in M_{s}^{\infty,1}(\mathbb{R}^{2n})$ if and only $f^{\ast}a\in
M_{s}^{\infty,1}(\mathbb{R}^{n}\oplus\mathbb{R}^{n})$.
\end{lemma}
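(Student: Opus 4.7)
The plan is to compute the cross-Wigner transform $W(f^*a,\Phi)$ explicitly in terms of $W(a,(f^{-1})^*\Phi)$ via a change of variables in its defining integral, and then to show that the determinants arising from this substitution cancel against those coming from a second change of variables in the outer $\zeta$-integral defining the $M_s^{\infty,1}$-norm.

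First I would start from the formula for the cross-Wigner transform on $\mathbb{R}^{2n}\oplus\mathbb{R}^{2n}$ (the $2n$-dimensional analogue of (\ref{wifi})),
\[
W(f^*a,\Phi)(z,\zeta)=C\int_{\mathbb{R}^{2n}}e^{-i\zeta\cdot y}a(fz+\tfrac{1}{2}fy)\overline{\Phi(z-\tfrac{1}{2}y)}\,dy,
\]
and perform the substitution $u=fy$. Writing $z-\tfrac{1}{2}y=f^{-1}(fz-\tfrac{1}{2}u)$ and $\zeta\cdot f^{-1}u=(f^{-T}\zeta)\cdot u$, a routine calculation yields the identity
\begin{equation}
W(f^*a,\Phi)(z,\zeta)=|\det f|^{-1}W\!\bigl(a,(f^{-1})^*\Phi\bigr)(fz,f^{-T}\zeta).\label{wigchange}
\end{equation}

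Next I would insert (\ref{wigchange}) into the norm
\[
\|f^*a\|_{\Phi,M_s^{\infty,1}}=\int_{\mathbb{R}^{2n}}\sup_{z\in\mathbb{R}^{2n}}\bigl|W(f^*a,\Phi)(z,\zeta)v_s(z)\bigr|\,d\zeta .
\]
Renaming the dummy variable $z':=fz$ inside the supremum turns $v_s(z)$ into $v_s(f^{-1}z')$, which by the weight-growth bound (\ref{csf}) satisfies $v_s(f^{-1}z')\le C_{s,f^{-1}}v_s(z')$. After extracting this constant, I change variables $\zeta':=f^{-T}\zeta$ in the outer integral, which contributes a factor $|\det f^{T}|=|\det f|$. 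This factor exactly cancels the $|\det f|^{-1}$ from (\ref{wigchange}), leaving
\[
\|f^*a\|_{\Phi,M_s^{\infty,1}}\le C_{s,f^{-1}}\int\sup_{z'}\bigl|W(a,(f^{-1})^*\Phi)(z',\zeta')v_s(z')\bigr|\,d\zeta'=C_{s,f^{-1}}\,\|a\|_{(f^{-1})^*\Phi,M_s^{\infty,1}},
\]
giving (\ref{CA14}) with $C_s=C_{s,f^{-1}}$. The converse inclusion (yielding the ``if and only if'' statement) follows by applying the same argument with $f$ replaced by $f^{-1}$, since $(f^{-1})^*(f^*a)=a$ and $(f^{-1})^*\Phi\in\mathcal{S}(\mathbb{R}^{2n})$ whenever $\Phi\in\mathcal{S}(\mathbb{R}^{2n})$.

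There is no real obstacle beyond bookkeeping: the main thing to get right is the interplay of the three changes of variables (one in $y$ inside the Wigner integral, one in $z$ inside the supremum, one in $\zeta$ in the outer integral) so that the Jacobian $|\det f|$ produced by the first is balanced by the one produced by the third. The only non-trivial ingredient is the submultiplicativity of the polynomial weight $v_s$ under linear maps, which is precisely inequality (\ref{csf}) recorded at the beginning of Subsection \ref{subsecmqs}.
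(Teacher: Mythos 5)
Your argument is correct and follows exactly the same route as the paper: derive the change-of-variables identity $W(f^*a,\Phi)(z,\zeta)=|\det f|^{-1}W(a,(f^{-1})^*\Phi)(fz,(f^T)^{-1}\zeta)$ by substituting $u=fy$ in the Wigner integral, then renormalize $z$ and $\zeta$ so that the $|\det f|^{-1}$ cancels against the Jacobian of the $\zeta$-substitution, and finish with the weight bound (\ref{csf}). The only cosmetic difference is that the paper first records the resulting exact equality $\|f^*a\|^\Phi_{M_s^{\infty,1}}=\int\sup_z|W(a,(f^{-1})^*\Phi)(z,\zeta)v_s(f^{-1}z)|\,d\zeta$ before applying (\ref{csf}), whereas you apply the weight estimate in the middle of the computation; this is immaterial.
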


\begin{proof}
Let us set $b=f^{\ast}a$. We have, by definition of the cross-Wigner
transform,
\[
W(b,\Phi)(z,\zeta)=\left(  \tfrac{1}{2\pi}\right)  ^{2n}\int_{\mathbb{R}^{2n}%
}e^{-i\zeta\cdot\eta}a(fz+\tfrac{1}{2}f\eta)\overline{\Phi(z-\tfrac{1}{2}%
\eta)}d\eta
\]
thus, performing the change of variables $\xi=f\eta$,%
\begin{multline*}
W(b,\Phi)(f^{-1}z,f^{T}\zeta)=\left(  \tfrac{1}{2\pi}\right)  ^{2n}|\det
f|^{-1}\\
\times\int_{\mathbb{R}^{2n}}e^{-i\zeta\cdot\xi}a(z+\tfrac{1}{2}\xi
)\overline{(f^{-1})^{\ast}\Phi(z-\tfrac{1}{2}\xi)}d\xi.
\end{multline*}
and hence%
\begin{equation}
W(b,\Phi)(z,\zeta)=|\det f|^{-1}W(a,(f^{-1})^{\ast}\Phi)(fz,(f^{T})^{-1}%
\zeta);\label{proofmp14}%
\end{equation}
taking the suprema of both sides of this equality and integrating we get%
\[
||f^{\ast}a||_{M_{s}^{\infty,1}}^{\Phi}=\int_{\mathbb{R}^{2n}}\sup
_{z\in\mathbb{R}^{2n}}|W(a,(f^{-1})^{\ast}\Phi)(z,\zeta)v_{s}(f^{-1}z)|d\zeta
\]
Since $v_{s}(f^{-1}z)\leq C_{s,f}v_{s}(z)$ for some constant $C_{s,f}>0$ (cf.
the inequality (\ref{csf})) the estimate (\ref{CA14}) follows.
\end{proof}

Let us now introduce the following notation: for an arbitrary window $\phi$
set
\begin{equation}
\mathcal{L}_{f,\phi}^{q}(\mathbb{R}^{2n})=W_{f,\phi}(M_{s}^{q}(\mathbb{R}%
^{n}))\subset L_{s}^{q}(\mathbb{R}^{2n}). \label{lqff}%
\end{equation}
Clearly $\mathcal{L}_{f,\phi}(\mathbb{R}^{2n})$ is a closed linear subspace of
$L_{s}^{q}(\mathbb{R}^{2n})$.

\begin{proposition}
\label{propreg}Let $\widetilde{A}_{\omega}$ be associated to the Weyl operator
$\widehat{A}\overset{\text{Weyl}}{\longleftrightarrow}a$. If $a\in
M_{s}^{\infty,1}(\mathbb{R}^{2n})$ then
\[
\widetilde{A}_{\omega}:\mathcal{L}_{f,\phi}^{q}(\mathbb{R}^{2n}%
)\longrightarrow\mathcal{L}_{f,\phi}^{q}(\mathbb{R}^{2n})
\]
(continuously) for every window $\phi\in\mathcal{S}(\mathbb{R}^{n})$.
\end{proposition}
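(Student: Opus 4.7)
The plan is to reduce the statement to the standard-symplectic case via the intertwining relation from Proposition \ref{aprime} and then invoke the known modulation-space boundedness of Weyl operators with Sj\"ostrand-type symbols (Proposition \ref{propsjo}(ii)).

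First I would pick an arbitrary $U \in \mathcal{L}_{f,\phi}^{q}(\mathbb{R}^{2n})$. By the very definition \eqref{lqff} of this space, there exists a unique $u \in M_{s}^{q}(\mathbb{R}^{n})$ with $U = W_{f,\phi} u$. Applying the intertwining formula \eqref{sfund} gives
\[
\widetilde{A}_{\omega} U \;=\; \widetilde{A}_{\omega} W_{f,\phi} u \;=\; W_{f,\phi} \widehat{A'} u,
\]
where $\widehat{A'} \overset{\text{Weyl}}{\longleftrightarrow} a \circ f = f^{\ast}a$. So the question reduces to showing that $\widehat{A'} u \in M_{s}^{q}(\mathbb{R}^{n})$, since then $W_{f,\phi}(\widehat{A'} u) \in \mathcal{L}_{f,\phi}^{q}(\mathbb{R}^{2n})$ by definition and the mapping property is established.

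The symbol $a$ lies in $M_{s}^{\infty,1}(\mathbb{R}^{2n})$ by hypothesis, and Lemma \ref{lemmaf} ensures that this class is invariant under linear changes of variables, so $f^{\ast}a \in M_{s}^{\infty,1}(\mathbb{R}^{2n})$ as well, with a norm bound $\|f^{\ast}a\|_{M_{s}^{\infty,1}}^{\Phi} \leq C_{s}\|a\|_{(f^{-1})^{\ast}\Phi,M_{s}^{\infty,1}}$. Now Proposition \ref{propsjo}(ii) applies directly to $\widehat{A'}$: it is bounded on every $M_{s}^{q}(\mathbb{R}^{n})$, and so
\[
\|\widehat{A'} u\|_{M_{s}^{q}} \;\leq\; C_{1}\, \|f^{\ast}a\|_{M_{s}^{\infty,1}}^{\Phi}\, \|u\|_{M_{s}^{q}}.
\]
Chaining this with the fact that $W_{f,\phi} = M_{f}^{-1}W_{\phi}$ is (up to a constant) an isometry of $L^{2}$ and, more to the point, that $\|W_{f,\phi} v\|_{L_{s}^{q}}$ is an equivalent norm on $M_{s}^{q}(\mathbb{R}^{n})$ (this is exactly the preceding Lemma which stated $v \in M_{s}^{q}$ iff $W_{f,\phi} v \in L_{s}^{q}$), we obtain a continuity estimate of the form
\[
\||\widetilde{A}_{\omega} U|\|_{L_{s}^{q}} \;\leq\; C_{2}\, \|a\|_{M_{s}^{\infty,1}}^{\Phi'}\, \||U|\|_{L_{s}^{q}}
\]
for $U \in \mathcal{L}_{f,\phi}^{q}(\mathbb{R}^{2n})$, which is the continuity assertion.

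There is no serious obstacle: the whole argument is a straightforward composition of three facts already established, namely the intertwining property \eqref{sfund}, the $M_{s}^{\infty,1}$-invariance under linear pullback (Lemma \ref{lemmaf}), and Gr\"ochenig--Toft boundedness of Sj\"ostrand-class operators on modulation spaces (Proposition \ref{propsjo}(ii)). The one point that needs mild care is bookkeeping of norms: one should verify that the equivalence $\|u\|_{M_{s}^{q}} \asymp \||W_{f,\phi} u|\|_{L_{s}^{q}}$ holds uniformly enough to convert the boundedness on $M_{s}^{q}(\mathbb{R}^{n})$ into continuity on the closed subspace $\mathcal{L}_{f,\phi}^{q}(\mathbb{R}^{2n}) \subset L_{s}^{q}(\mathbb{R}^{2n})$, but this is furnished by the preceding Lemma together with the fact that $W_{\phi}$ is an isometry from $M_{s}^{q}(\mathbb{R}^{n})$ onto its image equipped with the $L_{s}^{q}$-norm.
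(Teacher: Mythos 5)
Your proof is correct and follows essentially the same route as the paper's: reduce via the intertwining relation \eqref{sfund}, use Lemma \ref{lemmaf} to conclude $a\circ f\in M_{s}^{\infty,1}$, apply Proposition \ref{propsjo}(ii), and pull back through $W_{f,\phi}$. Your extra bookkeeping of norm equivalences is a welcome elaboration of a continuity claim the paper leaves largely implicit, but it is not a different argument.
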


\begin{proof}
Let $U\in\mathcal{L}_{f,\phi}^{q}(\mathbb{R}^{2n})$; by definition there
exists $u\in M_{s}^{q}(\mathbb{R}^{n})$ such that $U=W_{f,\phi}u$. In view of
the first intertwining relation (\ref{sfund}) we have
\[
\widetilde{A}_{\omega}W_{f,\phi}u=W_{f,\phi}\widehat{A^{\prime}}u
\]
where $\widehat{A^{\prime}}\overset{\text{Weyl}}{\longleftrightarrow}%
a^{\prime}$ with $a^{\prime}(z)=a(fz)$. In view of Lemma \ref{lemmaf} above we
have $a^{\prime}\in M_{s}^{\infty,1}(\mathbb{R}^{2n})$ and hence
$\widehat{A^{\prime}}u\in M_{s}^{q}(\mathbb{R}^{n})$ and is bounded in view of
Proposition \ref{propsjo}(ii). It follows that $W_{f,\phi}\widehat{A^{\prime}%
}u\in\mathcal{L}_{f,\phi}^{q}(\mathbb{R}^{2n})$.
\end{proof}

It is worthwhile (and important, in a quantum mechanical context) to note that
the spaces $\mathcal{L}_{f,\phi}^{q}(\mathbb{R}^{2n})$ cannot contain
functions which are \textquotedblleft too concentrated\textquotedblright%
\ around a point; this is reminiscent of the uncertainty principle. In
particular the Schwartz space $\mathcal{S}(\mathbb{R}^{2n})$ is not contained
in any of the $\mathcal{L}_{f,\phi}^{q}(\mathbb{R}^{2n})$. This observation is
based on the following result, proved in de Gosson and Luef
\cite{GOLULETT,GOLUAHP} using Hardy's uncertainty principle for a function and
its Fourier transform: assume that $u\in\mathcal{S}(\mathbb{R}^{n})$ is such
that $Wu\leq Ce^{-Mz\cdot z}$ for some $C>0$ and a real matrix $M=M^{T}>0$.
Consider now the eigenvalues of $JM$; these are of the form $\pm i\lambda_{j}$
with $\lambda_{j}>0$. Then we must have $\lambda_{j}\leq1$ for all
$j=1,...,n$. Equivalently , the symplectic capacity $c(\mathcal{W}_{M})$ of
the \textquotedblleft Wigner ellipsoid\textquotedblright\ $\mathcal{W}%
_{M}:Mz\cdot z\leq1$ satisfies $c(\mathcal{W})\geq\pi$. [Recall
\cite{HZ,Polter} that the symplectic capacity of an ellipsoid $\mathcal{W}$ in
$\mathbb{R}^{2n}$ is the number $\pi R^{2}$ where $R$ is the supremum of the
radii of all balls $B^{2n}(r)$ that can be sent into $\mathcal{W}_{M}$ using
symplectomorphisms of $(\mathbb{R}^{2n},\sigma)$]. This result in fact also
holds true for the cross-Wigner transform \cite{grzi00}: if $|W(u,\phi
)(z)|\leq Ce^{-Mz\cdot z}$ for some $\phi\in\mathcal{S}(\mathbb{R}^{n})$ then
$c(\mathcal{W})\geq\pi$. Assume now that $U\in\mathcal{L}_{f,\phi}%
^{q}(\mathbb{R}^{2n})$ satisfies the sub-Gaussian estimate $|U(z)|\leq
Ce^{-Mz\cdot z}$; by definition of $\mathcal{L}_{f,\phi}^{q}(\mathbb{R}^{2n})$
this is equivalent to%
\[
|W(u,\phi)(fz)|\leq Ce^{-(f^{-1})^{T}Mf^{-1}z\cdot z}%
\]
hence the ellipsoid $f(\mathcal{W}_{M})$ must have symplectic capacity at
least equal to $\pi$. We remark that a complete characterization of the spaces
$M_{s}^{q}(\mathbb{R}^{n})$ and $\mathcal{L}_{f,\phi}^{q}(\mathbb{R}^{2n})$ in
terms of the uncertainty principle is still lacking; we hope to come back to
this important question in a near future.

We finally notice that Lieb \cite{Lieb} has studied integral bounds for
ambiguity and Wigner distributions; how are his results related to ours? This
is certainly worth being explored, especially since he obtains an interesting
characterization for Gaussians in terms of $L^{2}$ norms. In \cite{Bonami}
Bonami et al. extend Beurling's uncertainty principle into a characterization
of Hermite functions. They obtain sharp results for estimates of the Wigner
distribution; it would perhaps be useful to study their results in our
context; we hope to come back to these possibilities in a near future.

\begin{acknowledgement}
Maurice de Gosson has been financed by the Austrian Research Agency FWF
(Projektnummer P20442-N13). Nuno Costa Dias and Jo\~{a}o Nuno Prata have been
supported by the grants PDTC/MAT/ 69635/2006 and PTDC/MAT/099880/2008 of the
Portuguese Science Foundation (FCT). Franz Luef has been financed by the Marie
Curie Outgoing Fellowship PIOF 220464.
\end{acknowledgement}

\begin{acknowledgement}
The authors would like to express their gratitude to the referee for useful
and constructive comments.
\end{acknowledgement}

\[
\]

\textbf{Author's addresses:}%
\[
\]

\textbf{Nuno Costa Dias and Jo\~{a}o Nuno Prata}

\textit{Departamento de Matem\'{a}tica. Universidade Lus\'{o}fona de
Humanidades }

\textit{e Tecnologias. Av. Campo Grande, 376, }

\textit{1749-024 Lisboa, Portugal}

\textit{and}

\textit{Grupo de F\'{\i}sica Matem\'{a}tica, }

\textit{Universidade de Lisboa, }

\textit{Av. Prof. Gama Pinto 2, }

\textit{1649-003 Lisboa, Portugal}\bigskip

\textbf{Maurice de Gosson and Franz Luef}

\textit{Universit\"{a}t Wien, NuHAG}

\textit{Fakult\"{a}t f\"{u}r Mathematik }

\textit{Wien 1090, Austria} \bigskip

\textbf{Franz Luef}


\textit{Department of Mathematics}

\textit{UC Berkeley}

\textit{847 Evans Hall}

\textit{Berkeley, CA 94720-3840, USA}


\begin{thebibliography}{99}                                                                                               %


\bibitem {dipra1}C. Bastos, O. Bertolami, N.C. Dias, and J.N. Prata,
Weyl--Wigner Formulation of Noncommutative Quantum Mechanics, J. Math. Phys.
49 (2008) 072101 (24 pages)

\bibitem {badipr}C. Bastos, N.C. Dias, and J.N. Prata, Wigner measures in
noncommutative quantum mechanics, Comm. Math. Phys. 299 (2010),
no.3, 709--740

\bibitem {babedipr}C. Bastos, O. Bertolami, N.C. Dias, and J.N. Prata,
Phase-Space Noncommutative Quantum Cosmology, Phys. Rev. D 78 (2008) 023516
(10 pages)

\bibitem {babedipr1}C. Bastos, O. Bertolami, N.C. Dias, and J.N. Prata, Black
Holes and Phase-Space Noncommutativity. Phys. Rev. D 80 (2009) 124038 (7 pages)

\bibitem {BFFLS1}F. Bayen, M. Flato, C. Fronsdal, A. Lichnerowicz, and D.
Sternheimer, Deformation Theory and Quantization. I. Deformation of Symplectic
Structures. Annals of Physics 111, 6--110 (1978)

\bibitem {BFFLS2}F. Bayen, M. Flato, C. Fronsdal, A. Lichnerowicz, and D.
Sternheimer, Deformation Theory and Quantization. II Physical Applications.
Annals of Physics 110 (1978)111--151

\bibitem {bracz}O. Bertolami, J. G. Rosa, C. M. L. de Arag\~{a}o, P. P.
Castorina, and D. Zappal\`{a}, Noncommutative gravitational quantum well.
Phys. Rev. D (3) 72 (2005), no. 2, 025010--025018

\bibitem {Bonami}A. Bonami, B. Demange, and P. Jaming, Hermite functions and
uncertainty principles for the Fourier and the windowed Fourier
transforms, Rev. Mat. Iberoamericana 19 (2003), no. 1,  23--55.

\bibitem {Bopp}F. Bopp, La m\'{e}canique quantique est-elle une m\'{e}canique
statistique particuli\`{e}re?\ Ann. Inst. H. Poincar\'{e} 15 81--112 (1956)

\bibitem {Boulkhemair}A. Boulkhemair, Remarks on a Wiener type
pseudodifferential algebra and Fourier integral operators, Mat.
Res. Lett. 4 (1997), no. 1, 53--67.

\bibitem {buniro10}E. Buzano, F. Nicola, and L. Rodino, Global
Pseudo-Differential Calculus on Euclidean Spaces. Springer Verlag, 2010

\bibitem {cahakola}S. M. Carroll, J. A. Harvey, V. A. Kosteleck\'{y}, C. D.
Lane, and Okamoto T., Noncommutative field theory and Lorentz violation. Phys.
Rev. Lett. 87 (2001), no. 14, 141601--141605

\bibitem {cofelu08}E. {C}ordero, E., H. G. {F}eichtinger, and F. {L}uef,
Banach {G}elfand triples for {G}abor analysis. In Pseudo-differential
Operators, volume 1949 of Lecture Notes in Mathematics, pages 1--33.
{S}pringer, {B}erlin, 2008

\bibitem {digoprlu1}N.C. Dias, M. de Gosson, F. Luef, J.N. Prata, A
Deformation Quantization Theory for Non-Commutative Quantum
Mechanics, J. Math. Phys. 51 (2010) 072101 (12 pages).

\bibitem {Douglas}M. R. Douglas and N. A. Nekrasov, Noncommutative field
theory. Rev. Mod. Phys. 73 (2001) 977--1029

\bibitem {fe81}H. G. Feichtinger, On a new Segal algebra. Monatsh. Math.,
92(4), 269--289 (1981)

\bibitem {fe06}H. G. Feichtinger, Modulation Spaces: Looking Back and Ahead.
Sampl. Theory Signal Image Process, 5(2), 109--140 (2006)

\bibitem {feko98}H. G. Feichtinger and W. {K}ozek, Quantization of {T}{F}
lattice-invariant operators on elementary {L}{C}{A} groups. In H. G.
{F}eichtinger and T. {S}trohmer, editors, Gabor Analysis and Algorithms.
Theory and Applications., {A}pplied and {N}umerical {H}armonic {A}nalysis,
pages 233--266, 452--488, {B}oston, {M}{A}, 1998. {B}irkh{\"{a}}user {B}oston

\bibitem {fe03-1}H. G. Feichtinger, Modulation spaces of locally compact
{A}belian groups. In R.~{R}adha, M.~{K}rishna, and S.~{T}hangavelu, editors,
Proc. Internat. Conf. on Wavelets and Applications, pages 1--56, {C}hennai,
{J}anuary 2002, 2003. {N}ew {D}elhi {A}llied {P}ublishers

\bibitem {fe09}H. G. Feichtinger, Banach {G}elfand triples for applications in
physics and engineering. volume 1146 of AIP Conf. Proc., pages 189--228.
{A}mer. {I}nst. {P}hys., 2009.

\bibitem {Birk}M. de Gosson, Symplectic Geometry and Quantum Mechanics.
Birkh\"{a}user, Basel, 2006

\bibitem {CPDE}M. de Gosson, Spectral Properties of a Class of Generalized
Landau Operators. Communications in Partial Differential Operators 33(11),
2096--2104 (2008)

\bibitem {FP}M. de Gosson, The symplectic camel and the uncertainty principle:
the tip of an iceberg? Found. Phys. 39 (2009), no. 2, 194--214

\bibitem {GOLULETT}M. de Gosson, and F. Luef F, Quantum States and Hardy's
Formulation of the Uncertainty Principle: a Symplectic Approach. Lett. Math.
Phys., 80, 69--82, 2007

\bibitem {GOLUAHP}M. de Gosson and F. Luef F., Principe d'Incertitude et
Positivit\'{e} des Op\'{e}rateurs \`{a} Trace; Applications aux Op\'{e}rateurs
Densit\'{e}. Ann. H. Poincar\'{e}\textit{ }9(2) 2008

\bibitem {GOLU1}M. de Gosson and F. Luef F., A new approach to the $\star
$-genvalue equation. Lett. Math. Phys.\ 85, 173--183 (2008)

\bibitem {goluseful}M. de Gosson and F. Luef, On the usefulness of modulation
spaces in deformation quantization, J. Phys. A: Math. Theor. \textbf{42}(31)
(2009) 315205 (17pp)

\bibitem {GOLU2}M. de Gosson and F. Luef F., Spectral and Regularity
properties of a Pseudo-Differential calculus Related to Landau Quantization.
Journal of Pseudo-Differential Operators and Applications 1(1) (2010)

\bibitem {gr00}K. Gr\"{o}chenig, Foundations of Time-Frequency Analysis. Appl.
Numer. Harmon. Anal., Birkh\"{a}user, Boston, MA, 2001

\bibitem {grojam}Gr\"{o}chenig, K.: Composition and spectral invariance of
pseudodifferential operators on modulation spaces. Journal d'analyse
math\'{e}matique, 98 (2006), 65--82

\bibitem {gr06bis}K. Gr\"{o}chenig, Time-Frequency Analysis on Sj\"{o}strand's
Class Rev.\textit{ }Mat. Iberoamericana\textit{,} 22(2) (2006), 703--724

\bibitem {grzi00}K. Gr\"{o}chenig, and G. Zimmermann, Hardy's Theorem and the
Short-Time Fourier Transform of Schwartz Functions. J. London Math. Soc. 63
(2001), 205-214

\bibitem {HZ}H. Hofer, H. and E. Zehnder, Symplectic Invariants and
Hamiltonian Dynamics. Birkh\"{a}user Advanced texts, (Basler Lehrb\"{u}cher,
Birkh\"{a}user Verlag, (1994)

\bibitem {Hor2}L. H\"{o}rmander, The Weyl calculus of pseudo-differential
operators. Comm. Pure Appl. Math. 32, 359--443 (1979)

\bibitem {Lieb}E.H. Lieb, Integral bounds for radar ambiguity functions and
Wigner distributions, J. Math. Phys. 31 (1990), no. 3,  594--599.
(1990).

\bibitem {Maillard}J. M. Maillard, On the twisted convolution product and the
Weyl transformation of tempered distributions. J. of Geom. and Physics, 3(2),
232--261 (1986)

\bibitem {Polter}L. Polterovich, The Geometry of the Group of Symplectic
Diffeomorphisms, Lectures in Mathematics, Birkh\"{a}user, (2001)

\bibitem {Shubin}M. A. Shubin, Pseudodifferential Operators and Spectral
Theory. Springer-Verlag, first edition 1987; second edition 2001 [original
Russian edition in Nauka, Moskva, 1978]

\bibitem {sj94}J. Sj\"{o}strand, An algebra of pseudodifferential operators.
Math. Res. Lett., 1(2) (1994), 185--192

\bibitem {Stein}E. M. Stein, Harmonic Analysis: Real Variable Methods,
Orthogonality, and Oscillatory Integrals. Princeton University Press, 1993.

\bibitem {A8}Toft, J.: Continuity Properties for Modulation spaces with
Applications in Pseudo-Differential Calculus, II. Ann. Global Anal. Geom.,
26(1) 73--106 (2004)

\bibitem {Szabo}R. J. Szabo, Quantum Field Theory on Noncommutative Spaces.
Phys. Rep. 378 (2003) 207

\bibitem {Unterberger1}A. Unterberger, Quantization, symmetries and
relativity, Perspectives on quantization (South Hadley, MA, 1996), Comtemp.
Math., vol. 214, Amer. Math. Soc., Providence, RI, 1998, pp 169--187

\bibitem {Unterberger2}A. Unterberger and H. Upmeier, Pseudodifferential
analysis on symmetric cones. Sud. Adv. Math., CRC Press, Boca Raton, FL (1996).

\bibitem {Weyl}H. Weyl, Gruppentheorie und Quantenmechanik. Transl. by H. P.
Robertson, The Theory of Groups and Quantum Mechanics, 1931, reprinted 1950 Dover.

\bibitem {Wong}M. W. Wong, Weyl Transforms (Springer Verlag), 1998
\end{thebibliography}
\end{document}